\def\RSthmtxt{theorem~}\newref{thm}{name = \RSthmtxt}}
\def\RSlemtxt{lemma~}\newref{lem}{name = \RSlemtxt}}
\theoremstyle{plain}
\newtheorem{thm}{\protect\theoremname}[section]
  \theoremstyle{remark}
  \newtheorem{rem}[thm]{\protect\remarkname}
  \theoremstyle{plain}
  \newtheorem{assumption}[thm]{\protect\assumptionname}
  \theoremstyle{definition}
  \newtheorem{defn}[thm]{\protect\definitionname}
  \theoremstyle{remark}
  \newtheorem{notation}[thm]{\protect\notationname}
  \theoremstyle{remark}
  \newtheorem{note}[thm]{\protect\notename}
  \theoremstyle{plain}
  \newtheorem{lem}[thm]{\protect\lemmaname}
  \theoremstyle{plain}
  \newtheorem{prop}[thm]{\protect\propositionname}
  \theoremstyle{plain}
  \newtheorem{cor}[thm]{\protect\corollaryname}
  \theoremstyle{plain}
  \newtheorem{conjecture}[thm]{\protect\conjecturename}
  \theoremstyle{definition}
  \newtheorem{example}[thm]{\protect\examplename}
  \providecommand{\assumptionname}{Assumption}
  \providecommand{\conjecturename}{Conjecture}
  \providecommand{\corollaryname}{Corollary}
  \providecommand{\definitionname}{Definition}
  \providecommand{\examplename}{Example}
  \providecommand{\lemmaname}{Lemma}
  \providecommand{\notationname}{Notation}
  \providecommand{\notename}{Note}
  \providecommand{\propositionname}{Proposition}
  \providecommand{\remarkname}{Remark}
\providecommand{\theoremname}{Theorem}
\begin{document}
\global\long\def\kk{\mathcal{\mathbb{K}}}
\global\long\def\ff{\mathbb{F}}
\global\long\def\ll{\mathbb{L}}
\global\long\def\fxg{\ff\left\langle X_{G}\right\rangle }
\global\long\def\fxx{\ff\left\langle X\right\rangle }
\global\long\def\endo{\mbox{End}}
\global\long\def\gg{\mathfrak{g}}
\global\long\def\jj{\mathfrak{J}}

\title{$G$-Graded Central Polynomials and $G$-Graded Posner's Theorem}

\author{Yakov Karasik}
\maketitle
\begin{abstract}
Let $\ff$ be characteristic zero field, $G$ a residually finite
group and $W$ a $G$-prime and PI $\ff$-algebra. By constructing
$G$-graded central polynomials for $W$, we prove the $G$-graded
version of Posner's theorem. More precisely, if $S$ denotes all non-zero
degree $e$ central elements of $W$, the algebra $S^{-1}W$ is $G$-graded
simple and finite dimensional over its center. 

Furthermore, we show how to use this theorem in order to recapture
the result of Aljadeff and Haile stating that two $G$-simple algebras
of finite dimension are isomorphix iff their ideals of graded identities
coincide.
\end{abstract}

\section{Introduction}

\selectlanguage{english}%
Let $W$ be an associative algebra over a field $\ff$. The most basic
setup of the theory of associative algebras satisfying a polynomial
identity (PI in short) assumes the existence of a non-zero polynomial
$f=f(x_{1},...,x_{n})$ with non-commuting variables $x_{1},...,x_{n}$,
such that $f(a_{1},...,a_{n})=0$ for every $a_{1},...,a_{n}\in W$.
In that case we say that $W$ is a PI algebra and $f$ is a polynomial
identity of $A$. The class of PI algebras includes all commutative
algebras (since clearly $[x_{1},x_{2}]$ is an identity), finite dimensional
algebras, Grassmann algebras and Grassmann envelopes of finite dimensional
algebras. This family is rigid in the sense of being closed to major
algebraic operations. For instance, the homomorphic image of a PI
algebra is easily seen to be also PI. If $A\subseteq B$ and $B$
is PI so does $A$. The direct product and even the tensor product
of two PI algebras is, by a celebrated theorem of Regev \cite{Regev1971},
also PI. 

PI algebras possesses properties similar to finite dimensional algebras.
The Jacobson radical of an affine PI algebra is nilpotent \cite{AlexeiBelov-Kanela}.
If $A$ is a PI algebra containing no nilpotent ideals, then it also
does not contain nil ideals. The primitive PI algebras are simple
and finite dimensional over their center of dimension bounded by the
degree of a multilinear identity. Furthermore, if $A$ is a \emph{semiprime}
(i.e. the intersection of its prime ideals is zero which is equivalent
to $A$ having no nilpotent ideals) PI algebra, then every non-zero
ideal of $A$ intersects the center of $A$ non-trivially. The last
statement yields the famous theorem of Posner (see all in chapter
1.11 of \cite{Giambruno2005}):
\begin{thm}[Posner's Theorem]
\label{thm:PosnerUngraded}Suppose $A$ is semiprime and PI algebra
having a field as its center. Then, $A$ is a simple finite dimensional
algebra over its center.
\end{thm}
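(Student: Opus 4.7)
The plan is to deduce the theorem directly from the two structural facts listed just before its statement: that every non-zero two-sided ideal of a semiprime PI algebra meets the center non-trivially, and that a primitive PI algebra is simple and finite dimensional over its center.

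First I would establish that $A$ is simple. Let $I$ be a non-zero two-sided ideal of $A$. Since $A$ is semiprime and PI, the ideal-meets-center property gives a non-zero element $z \in I \cap Z(A)$. By hypothesis $Z(A)$ is a field, so $z$ is invertible in $Z(A)$ and therefore $1 = z z^{-1} \in I$, forcing $I = A$. (To invoke $1$ one uses that a commutative field has a unit, which is then automatically the unit of $A$ since $Z(A) \subseteq A$; alternatively, observe that $Az$ is already all of $A$ because multiplication by the central unit $z$ is surjective on $A$.)

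Next I would upgrade simplicity to finite dimensionality over the center. A simple algebra is primitive: any non-zero simple left module is faithful because its annihilator is a proper two-sided ideal, hence zero. Thus $A$ is a primitive PI algebra, and the Kaplansky-type result cited in the introduction immediately yields that $A$ is finite dimensional over $Z(A)$, with dimension bounded in terms of the degree of any multilinear identity satisfied by $A$.

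The argument has essentially no hard step: both ingredients (the center-intersection property for semiprime PI algebras and the structure of primitive PI algebras) are invoked as black boxes from the discussion preceding the theorem. The only point that deserves a line of justification is the passage from "every non-zero ideal meets the center" together with "the center is a field" to genuine simplicity, which as above is just the observation that a non-zero central element of a field is a unit. The graded analogue pursued later in the paper will have to replace each of these ingredients by a $G$-graded version, and that is where the real work lies; for the present ungraded statement the proof is immediate.
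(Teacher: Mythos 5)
Your proposal is correct and matches the paper's intent: the paper explicitly presents Theorem \ref{thm:PosnerUngraded} as a consequence of the two facts you use (every non-zero ideal of a semiprime PI algebra meets the center, and Kaplansky's theorem on primitive PI algebras), deferring details to the cited reference. Your write-up simply carries out that deduction, so the approach is essentially the same.
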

A more general version of this theorem is
\begin{thm}
Suppose $A$ is semiprime and PI algebra with a center $C$. Then,
every non-zero ideal of $A$ intersects non-trivially $C$.
\end{thm}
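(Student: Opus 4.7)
My plan is to combine the Formanek--Razmyslov central-polynomial theorem with a standard semiprime-ring manipulation. Recall that for each $m\geq1$ there is a multilinear polynomial $c_{m}(x_{1},\dots,x_{k})$ that is not an identity of $M_{m}(\ff)$ but for which $[c_{m},y]$ is; consequently, on any algebra satisfying the identities of $M_{m}(\ff)$, every evaluation of $c_{m}$ commutes with every element of the algebra.

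Let $U$ be a non-zero ideal of $A$. I would first observe that $U$, regarded as a subring, is itself a semiprime PI algebra: polynomial identities are inherited by subrings, while semiprimeness of $A$ transfers to $U$ by a standard argument. Let $m$ be the largest PI-degree of a prime quotient of $U$. Then $U$ satisfies all the identities of $M_{m}(\ff)$, so $c_{m}$ takes values central in $U$; and, by the previously stated form of Posner's theorem (\thmref{PosnerUngraded}) applied to a prime quotient $U/Q$ of PI-degree exactly $m$, the central localization of $U/Q$ is isomorphic to $M_{m}(K)$ for some field $K$. Multilinearity of $c_{m}$ together with the fact that the denominators in this localization are central allows one to propagate the non-vanishing of $c_{m}$ on $M_{m}(K)$ back to $U/Q$, and hence to $U$.

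Pick $u_{1},\dots,u_{k}\in U$ with $z:=c_{m}(u_{1},\dots,u_{k})\neq0$. Then $z\in U$ (since $U$ is an ideal and every monomial of $c_{m}$ involves $u_{1}$) and $z$ commutes with every element of $U$. To upgrade this to $z\in Z(A)$, note that for $r\in A$ and $u\in U$ the identities $uz=zu$ together with $ur\in U$ (which gives $z(ur)=(ur)z$) force $u(zr-rz)=0$, and symmetrically $(zr-rz)u=0$. Thus $zr-rz\in U\cap\mathrm{ann}_{A}(U)$; but in any semiprime ring this intersection is zero, since any element $y$ of it satisfies $yAy\subseteq Uy=0$, whence the ideal $AyA$ has square zero and so vanishes. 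Hence $zr=rz$ for every $r\in A$, and $0\neq z\in U\cap Z(A)=U\cap C$.

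The main obstacle is the middle step --- exhibiting \emph{some} non-zero evaluation $c_{m}(u_{1},\dots,u_{k})$. This requires leveraging the Posner localization of a maximal-PI-degree prime quotient $U/Q$ and propagating the non-triviality of $c_{m}$ on $M_{m}(K)$ back through the central localization and the projection $U\to U/Q$ (which is where both the Formanek--Razmyslov construction of $c_{m}$ and the classical Posner theorem do the essential work). Once this is secured, the upgrade from ``central in $U$'' to ``central in $A$'' is routine annihilator arithmetic in a semiprime ring.
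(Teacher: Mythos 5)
Your argument is correct, but it takes a genuinely different route from the one the paper relies on. The paper leaves this classical statement unproved (deferring to Giambruno--Zaicev) and indicates, both in the introduction and in the proof of its graded analogue \thmref{G-per-posner}, the following strategy: pass to $A[x]$, which by Amitsur's theorem is semisimple, write it as a subdirect product of primitive (hence, by Kaplansky, simple finite-dimensional) algebras, and evaluate a central polynomial that is ``sharp'' for the maximal exponent among the factors so that its values are simultaneously central in every factor; the containment $Z(I)\subseteq Z(A)$ is likewise read off from the subdirect decomposition (\lemref{GSemisimpleIdeals}). You instead stay inside the semiprime ring itself: you treat the ideal $U$ as a semiprime PI algebra in its own right, obtain a non-zero value $z$ of the Formanek--Razmyslov polynomial $c_{m}$ by passing to a prime quotient of maximal PI-degree and invoking Posner for prime rings, and then upgrade ``central in $U$'' to ``central in $A$'' via the annihilator computation $zr-rz\in U\cap\mathrm{ann}_{A}(U)=0$. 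This is essentially Rowen's original argument; what it buys is independence from Amitsur's theorem on $A[x]$ (exactly the ingredient that \remref{OnlyFinite} identifies as failing for infinite grading groups), at the cost of needing Posner for prime rings and the routine fact that a nonzero ideal of a semiprime ring is itself semiprime. Two small points deserve tightening: the central localization of a prime PI ring of PI-degree $m$ is a central simple algebra of degree $m$, not necessarily split, so to see that $c_{m}$ survives you should tensor with a splitting field and use multilinearity of $c_{m}$; and to deduce $y=0$ from $yAy=0$ when $A$ is not assumed unital you should work with the full ideal generated by $y$ rather than with $AyA$ alone.
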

\selectlanguage{american}%
The main idea of the proof involves the existence of a central polynomial
for $M_{n}(\ff)$ - $n\times n$ matrices over the field $\ff$ and
the fact, due to Amitsur, that if $A$ is semiprime, then $A[x]=A\otimes_{\ff}\ff[x]$
is semisimple.

In this paper we consider the group graded analogue of Posner's Theorem.
To be more precise, suppose $G$ is a \textbf{residually finite} group.
We prove:
\begin{thm}[$G$-graded Posner's Theorem]
\label{thm:MainTheorem}Suppose $A$ is a $G$-graded algebra over
a field $\ff$ of characteristic zero. Suppose further that $A$ is
$G$-semiprime and (ordinary) PI. Then, every $G$-graded ideal $\{0\}\neq I$
of $A$ intersects the $e$-part of the center of $A$ (denoted by
$C_{e}$) in a non-trivial fashion. 

As a result, if $C_{e}$ is a field, then $A$ is a $G$-simple algebra.
Moreover, $A$ is finite dimensional over $C_{e}$.
\end{thm}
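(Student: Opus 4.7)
The overall strategy mirrors the ungraded proof of Posner's theorem (Theorem~\ref{thm:PosnerUngraded}): use a central polynomial to extract, from any non-zero ideal, a non-zero central element. What is new is that the central polynomial must be $G$-graded and the ideal is only assumed graded. The first step is the standard reduction to the case where $A$ is $G$-prime. Because $A$ is $G$-semiprime, the intersection of its $G$-prime graded ideals is $\{0\}$, so for any non-zero graded ideal $I$ of $A$ there exists a $G$-prime graded ideal $P$ with $I\not\subseteq P$; the image $\overline{I}$ of $I$ in $A/P$ is a non-zero graded ideal of the $G$-prime PI algebra $A/P$, and producing a non-zero element of the $e$-graded center of $A/P$ lying in $\overline{I}$ gives, after lifting, a non-zero element of $I\cap C_{e}$. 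Hence we may assume $A$ is $G$-prime.

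The main step, which I expect to carry most of the technical weight, is the construction of a multilinear $G$-graded polynomial $p$ which is not a $G$-graded identity of $A$ and yet all of whose evaluations on $A$ lie in $C_{e}$. Here the residual finiteness of $G$ plays its crucial role: for each finite-index normal subgroup $N\trianglelefteq G$, the induced $G/N$-grading on $A$ is a grading by a finite group, and for finite groups one can hope to construct a graded central polynomial by adapting the classical Razmyslov--Formanek argument. The key finite-group ingredients should be a graded analogue of Amitsur's theorem on the semiprimitivity of $A[x]$, combined with the structure theory of finite-group graded prime PI algebras. Choosing $N$ so as to separate the finitely many elements of $G$ appearing in some fixed non-identity multilinear $G$-graded polynomial for $A$ should then allow one to promote a $G/N$-graded central polynomial into a $G$-graded central polynomial for $A$ itself. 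This is where I expect essentially all of the genuine difficulty to lie.

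Once such a $p$ is available, the rest is mechanical. By multilinearity, substituting an element of $I$ into one variable of $p$ and arbitrary homogeneous elements of $A$ into the others produces values inside $I\cap C_{e}$; since $p$ is not an identity of the $G$-prime algebra $A$, we can arrange a non-zero value and conclude $I\cap C_{e}\neq\{0\}$. When $C_{e}$ is a field, any such non-zero element is invertible in $A$, forcing every non-zero graded ideal to equal $A$, so $A$ is $G$-simple. Finite dimensionality over $C_{e}$ should then follow by localizing at the multiplicative set of non-zero elements of $C_{e}$ (trivial when $C_{e}$ is already a field, but relevant for the more general formulation in the abstract), observing that the resulting $G$-simple PI algebra is ungraded semiprime (its Jacobson radical is a proper graded ideal, hence zero), and applying the classical Posner theorem (Theorem~\ref{thm:PosnerUngraded}) to bound its dimension over its ordinary center, which contains $C_{e}$.
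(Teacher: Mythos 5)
Your high-level plan (graded central polynomials, passage to finite quotients $G/N$ via residual finiteness, choosing $N$ to separate the finitely many group elements occurring in a fixed polynomial) does match the paper's, but two of your reductions are broken and the step you defer as ``the genuine difficulty'' is misidentified. The reduction to the $G$-prime case does not work as stated: if $\bar a\in\overline I$ is a non-zero element of $Z(A/P)_e$, a preimage $a\in I$ only commutes with $A$ modulo $P$, so ``lifting'' does not produce an element of $I\cap C_e$. The paper never passes to a single prime quotient; it keeps the whole subdirect product $A\hookrightarrow\prod_j A_j$ of $G$-simple quotients and produces one polynomial $f=\rho(F)$ that is simultaneously central or an identity on \emph{every} factor --- this is the point of $\exp$-sharpness and of picking the factor of maximal exponent --- so that $f(A)\subseteq Z\bigl(\prod_j A_j\bigr)_e\cap A\subseteq Z(A)_e$. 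Relatedly, your claim that substituting one element of $I$ into a multilinear central polynomial of $A$ can be ``arranged'' to give a non-zero value is not justified. The paper sidesteps both issues with \lemref{GSemisimpleIdeals}: after replacing $A$ by $A[x]$ (which is $G$-semisimple by Cohen--Montgomery, and this is where finiteness of the quotient is genuinely needed), the ideal $I$ is itself a $G$-semisimple algebra with $Z(I)\subseteq Z(A)$, so the entire central-polynomial argument is run inside $I$ and yields non-zero elements of $Z(I)_e\subseteq I\cap C_e$ directly.

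The real technical content is also elsewhere than you place it. Since $A$ is semiprime PI it is PI-equivalent to some $M_n(\ff)$, so an ungraded central polynomial (Regev's $L_{n^2}$) is available for free; no graded Razmyslov--Formanek argument is needed. What is needed, and what your sketch gives no mechanism for, is that a non-zero central value of $F$ on a graded algebra has non-zero $e$-component, i.e.\ that $\rho(F)$ is not a graded identity --- this is exactly the failure in Balaba's proof that the introduction flags, and it is the content of the paper's ``strong central polynomial'' theorems for $\ff^{\alpha}H\otimes M_{\gg}(\ff)$ (proved via the involution $\left(cu_{h}\otimes M\right)^{\ast}=\bar{c}u_{h}^{-1}\otimes M^{\ast}$, or via lifting central idempotents through a Schur cover of $H$). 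Without this, your promotion of a $G/N$-graded central polynomial to a $G$-graded one has nothing to promote. Finally, your last step misfires: classical Posner bounds $\dim_{Z(A)}A$, not $\dim_{C_e}A$, and $Z(A)$ need not be a field nor finite over $C_e$ a priori; the paper gets finite dimension over $C_e=Z(A)_e$ from the graded Kaplansky theorem (\corref{kapalnsky}), since a $G$-simple PI algebra is $G$-primitive.
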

\begin{rem}
In a (English translation of) paper of Balaba \cite{MR2072615} it
seems that the author have proved a version of \thmref{MainTheorem}
for every $G$. However, the proof given there is valid only for finite
abelian groups $G$. Indeed, the author starts his proof (Proposition
2) by using Corollary 4.5 from \cite{Cohen1984} which is valid for
finite groups (it is false for infinite groups, see \remref{OnlyFinite}).
The end of the proof makes sense only for abelian groups. Indeed,
the auther claims that given an ungraded central polynomial $q(x_{1},...,x_{n})$
of a $G$-graded algebra $A$, it is possible to find $a_{1}^{0},...,a_{n}^{0}\in A$
- a homogenous evaluation of $q$ such that $q(a_{1}^{0},...,a_{n}^{0})$
is non-zero and homogenous. If $G$ is abelian, then every monomial
$a_{\sigma(1)}^{0}\cdots a_{\sigma(n)}^{0}$ is of the same homogenous
degree, so the above statment holds. However, if $G$ is not abelian
there is no reason for it to hold.

In fact, the above difficulties are the main focus of this paper.
They require a new technic to construct $G$-graded central polynomials
of $G$-simple and finite dimensional algebras.
\end{rem}
Finally, in the final section, we show how to use the above theorem
to recover a result of Aljadeff and Haile \cite{Aljadeff2014} which
states
\begin{thm}
Suppose $G$ is a finite group and $A_{1},A_{2}$ are $G$-simple
algebras of finite dimension over an algebraically closed field $\ff$
of characteristic zero. Then, $id_{G}(A_{1})=id_{G}(A_{2})$ if and
only if $A_{1}$ and $A_{2}$ are $G$-isomorphic over $\ff$.
\end{thm}
Our technique uses very little from the structure theory of $G$-simple
algebras and will be used in consequent papers in order to investigate
the connection between $G$-simple structures and their graded identities.
 
\selectlanguage{english}%

\section{\label{sec:-graded-structure-theorems}$G$-graded structure theorems}

Throughout this section $\ff$ will denote a field of \textbf{any
characteristic} and $G$ will denote \textbf{any} group. The goal
of this section is to prove the $G$-graded analogs of the basic structure
theorems of primitive and simple PI algebras. More precisely, we will
prove the $G$-graded Wedderburn's theorem, $G$-graded Jacobson's
density theorem and Kaplansky's theorem on primitive PI algebras.
These theorems even in the $G$-graded case should be well known,
however the author has failed to find a full account of their proofs
in the literature. 

We note that the work of Bakhturin, Segal and Zaicev (see \cite{MR2488221})
treats the $G$-graded Wedderburn's theorem in the case where $G$
is finite and the characteristic of $\ff$ is zero or coprime to $|G|$. 

We follow the exposition of the ungraded case in \cite{Herstein1994}.
\begin{assumption}
Throughout the chapter $A$ denotes a $G$-graded $\ff$-algebra.
\end{assumption}
\begin{defn}
Let $M$ be a left $A$-module. We say that $M$ is \emph{$G$-graded
$A$-module} if 
\[
M=\bigoplus_{g\in G}M_{g}
\]
 ($M_{g}$ is an $\ff$-vector space) and $A_{g}M_{h}\subseteq M_{gh}$
for all $g,h\in G$.

A $G$-graded $A$-module $M$ is called \emph{$G$-graded irreducible}
if it contains no non-trivial $G$-graded $A$-submodules. 

Let $M$ and $N$ be $G$-graded $A$-modules. We say that the $A$-module
map $\phi:M\to N$ is a \emph{$G$-graded homomorphism of degree $g\in G$},
if for every $h\in G$: $\phi(M_{h})\subseteq N_{gh}$. 
\end{defn}
\begin{notation}
Since $G$ and $A$ are fixed we will use the abbreviated phrases
``graded module'', ``graded irreducible'' and ``graded homomorphisms''. 
\end{notation}
\begin{note}
The kernel of a graded module homomorphism $\phi:M\to N$ is a graded
module. Indeed, suppose $a=\sum_{g\in G}a_{g}\in M$ ($a_{g}\in M_{g})$
is mapped to zero. Since each $\phi(a_{g})$ is an element of a distinct
homogeneous component of $N$, the $a_{g}$ must be mapped to zero. 

Moreover, the image of such a homomorphism is a graded submodule of
$N$.
\end{note}
Suppose $M$ is a graded module. Let $\endo{}_{A}^{G}(M)$ denote
the $\ff$-algebra generated by all graded endomorphisms of $M$.
This is a $G$-graded algebra where $\endo{}_{A}^{G}(M){}_{g}$ consists
of all graded homomorphisms of degree $g$. This indeed defines a
$G$-grading on $\endo{}_{A}^{G}(M)$: It is trivial that $\endo{}_{A}^{G}(M)_{g}\cdot\endo{}_{A}^{G}(M)_{g^{\prime}}\subseteq\endo{}_{A}^{G}(M)_{gg^{\prime}}$. 
\begin{rem}
If $G$ is a finite group, then $\endo{}_{A}^{G}(M)=\endo{}_{A}(M)$.
Indeed, if $\phi\in\endo{}_{A}(M)$, write $\phi_{g}=\sum_{h\in G}P_{gh}\phi P_{h}$,
where $P_{h}$ is the projection onto $M_{h}$, and notice that $\phi_{h}$
sits inside $\endo{}_{A}(M)_{h}$. Now, 
\[
\sum_{g\in G}\phi_{g}=\sum_{h,g\in G}P_{gh}\phi P_{h}=\left(\sum_{h\in G}P_{h}\right)\cdot\phi\cdot\left(\sum_{g\in G}P_{g}\right)=\phi.
\]
\end{rem}
\begin{defn}
A $G$-graded $\ff$-algebra $\mathbf{D}$ is called \emph{$G$-division
algebra} if every non-zero homogeneous element of $\mathbf{D}$ is
invertible.
\end{defn}
\begin{rem}
It is clear that $1\in\mathbf{D}_{e}$, so in particular $\mathbf{D}_{e}\neq0$.
\end{rem}
\begin{lem}[Schur's Lemma]
 \label{lem:Schur}If $M$ is graded irreducible, then $\endo{}_{A}^{G}(M)$
is a $G$-division algebra. Moreover, if $N$ is an irreducible graded
module which is non-graded isomorphic to $M$, then there are no graded
homomorphisms from $M$ to $N$ but the zero homomorphism. 
\end{lem}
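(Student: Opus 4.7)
The plan is to mimic the classical proof of Schur's lemma, carefully keeping track of degrees. The essential inputs, recorded in the note preceding the statement, are that for any graded homomorphism $\phi$ of degree $g$ the kernel is a graded submodule of the source and the image $\phi(M)=\bigoplus_{h}\phi(M_{h})\subseteq\bigoplus_{h}N_{gh}$ is a graded submodule of the target. Combined with graded irreducibility, these facts will force any nonzero graded homomorphism to be a bijection of some degree~$g$.

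First I would establish the $G$-division assertion. Take a nonzero homogeneous $\phi\in\endo_{A}^{G}(M)_{g}$. By graded irreducibility of $M$, the graded submodule $\ker\phi$ must be zero, and the graded submodule $\operatorname{Im}\phi$ must be all of $M$. Hence $\phi$ is an $A$-module isomorphism of $M$. Because $\phi$ is bijective and satisfies $\phi(M_{h})\subseteq M_{gh}$, the restriction $M_{h}\to M_{gh}$ is itself forced to be a bijection (any $v\in M_{gh}$ has a preimage in $M$, which by homogeneity must lie in $M_{h}$). Consequently the set-theoretic inverse $\phi^{-1}$, which is automatically an $A$-module map, sends $M_{gh}$ into $M_{h}$; i.e. $\phi^{-1}\in\endo_{A}^{G}(M)_{g^{-1}}$. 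This proves that every nonzero homogeneous element of $\endo_{A}^{G}(M)$ is invertible, so it is a $G$-division algebra.

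For the second assertion, I interpret ``non-graded isomorphic'' as: there is no graded isomorphism (of any degree) between $M$ and $N$. Assume for contradiction that $\psi:M\to N$ is a nonzero graded homomorphism of some degree $g$. Exactly the same reasoning shows $\ker\psi=0$ (irreducibility of $M$) and $\operatorname{Im}\psi=N$ (irreducibility of $N$), and the degree-tracking argument above yields $\psi^{-1}$ as a graded homomorphism of degree $g^{-1}$. Hence $\psi$ is a graded isomorphism of degree $g$, contradicting the hypothesis.

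The only real subtlety, and therefore the ``main obstacle,'' is the verification that the inverse of a nonzero graded degree-$g$ map is itself homogeneous (of degree $g^{-1}$). Once this is in hand, the classical argument goes through verbatim; without it, one could only conclude that $\phi$ is an ordinary $A$-module isomorphism and would lose the grading information needed to conclude that $\endo_{A}^{G}(M)$ is $G$-graded division.
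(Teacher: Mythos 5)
Your proof is correct and follows essentially the same route as the paper's: kernel and image of a homogeneous homomorphism are graded submodules, so graded irreducibility forces bijectivity, hence invertibility. The only difference is that you spell out the (true, and worth noting) detail that the inverse is itself homogeneous of degree $g^{-1}$, which the paper leaves implicit in the word ``invertible.''
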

\begin{proof}
Let $0\neq\phi$ be a homogeneous element in $\endo{}_{A}^{G}(M)$.
Since $\ker\phi$ is a graded submodule of $M$, we have $\ker\phi=0$.
So $\phi$ is injective. From the same reason its image must be equal
to $M$. So $\phi$ is also surjective. All in all, $\phi$ is isomorphism,
thus invertible. 

For the second part, let $\phi$ be a graded homomorphism between
$M$ and $N$. As before, the kernel and the image must be zero or
everything. If the kernel is zero and the image is $N$, we get that
$\phi$ is a graded isomorphism - contradicting the assumption on
$M$ and $N$. In any other case $\phi$ must be the zero homomorphism. 
\end{proof}
\begin{lem}
\label{lem:graded_devision}\label{lem:StructureOfGDivision}If $\mathbf{D}$
is a $G$-division $\ff$-algebra, then $\mathbf{D}=\oplus_{h\in H}(\mathbf{D}_{h}=Db_{h})$,
where $D$ is an $\ff$-division algebra, $H$ is a subgroup of $G$
and $b_{h}$ is an invertible element of $\mathbf{D}_{h}$. Moreover,
if $\ff$ is algebraically closed and $\mbox{dim}_{\ff}\mathbf{D}_{e}<\infty$,
then $\mathbf{D}$ is a twisted group algebra. That is, $\mathbf{D}=\ff^{\alpha}H$,
where $\alpha\in Z^{2}(H,\ff^{\times})$ ($H$ acts trivially on $\ff^{\times}$).
\end{lem}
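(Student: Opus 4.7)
The plan is to start by identifying the subgroup $H$ as the support of $\mathbf{D}$, namely $H=\{g\in G:\mathbf{D}_{g}\neq 0\}$. First I would check that $H$ is a subgroup. Closure under multiplication is immediate: if $a\in\mathbf{D}_{g}$ and $b\in\mathbf{D}_{h}$ are nonzero, then $ab\in\mathbf{D}_{gh}$ is a product of invertible elements, hence invertible and in particular nonzero. For closure under inversion, I would argue that the inverse of a nonzero homogeneous element is homogeneous: if $a\in\mathbf{D}_{g}\setminus\{0\}$ and $a^{-1}=\sum_{k}c_{k}$ with $c_{k}\in\mathbf{D}_{k}$, then expanding $aa^{-1}=1\in\mathbf{D}_{e}$ and using that $ac_{k}\in\mathbf{D}_{gk}$, the uniqueness of the grading decomposition forces $a^{-1}=c_{g^{-1}}\in\mathbf{D}_{g^{-1}}$.

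Next, set $D:=\mathbf{D}_{e}$. By the $G$-division assumption every nonzero element of $D$ is invertible in $\mathbf{D}$, and the previous paragraph shows its inverse lies in $\mathbf{D}_{e}=D$, so $D$ is an honest $\ff$-division algebra. For each $h\in H$ pick any invertible $b_{h}\in\mathbf{D}_{h}\setminus\{0\}$ (with the choice $b_{e}=1$). Given $x\in\mathbf{D}_{h}$, the element $xb_{h}^{-1}$ is homogeneous of degree $e$, so $xb_{h}^{-1}\in D$ and hence $x\in Db_{h}$. The reverse inclusion $Db_{h}\subseteq \mathbf{D}_{h}$ is trivial. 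Thus $\mathbf{D}_{h}=Db_{h}$, giving the desired decomposition $\mathbf{D}=\bigoplus_{h\in H}Db_{h}$.

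For the second statement, assume $\ff$ is algebraically closed and $\dim_{\ff}D<\infty$. Any $d\in D$ generates a finite-dimensional commutative $\ff$-subalgebra $\ff[d]$ which is a domain (being contained in the division algebra $D$), hence a finite field extension of $\ff$; algebraic closure forces $\ff[d]=\ff$, so $D=\ff$. Consequently $\mathbf{D}_{h}=\ff b_{h}$ for every $h\in H$, and the product $b_{h_1}b_{h_2}$ lies in $\mathbf{D}_{h_1 h_2}=\ff b_{h_1 h_2}$; since the product is invertible there is a unique nonzero scalar $\alpha(h_1,h_2)\in\ff^{\times}$ with
\[
b_{h_1}b_{h_2}=\alpha(h_1,h_2)\,b_{h_1 h_2}.
\]
Associativity $(b_{h_1}b_{h_2})b_{h_3}=b_{h_1}(b_{h_2}b_{h_3})$ immediately translates into the $2$-cocycle condition on $\alpha$, and the choice $b_{e}=1$ makes $\alpha$ normalized. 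Therefore $\mathbf{D}\cong\ff^{\alpha}H$ as $G$-graded $\ff$-algebras.

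The only point that requires care is the homogeneity of inverses in the first paragraph, since without it one cannot even see that $D=\mathbf{D}_{e}$ is closed under inversion or that $H$ is closed under $g\mapsto g^{-1}$; everything else is a direct consequence of the definitions, plus the classical fact that a finite-dimensional division algebra over an algebraically closed field is the field itself.
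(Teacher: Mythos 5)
Your proof is correct and follows essentially the same route as the paper: identify $H$ as the support of $\mathbf{D}$, use invertibility of nonzero homogeneous elements to get $\mathbf{D}_h=Db_h$, and in the algebraically closed finite-dimensional case reduce to $D=\ff$ and read off the cocycle from $b_{h_1}b_{h_2}=\alpha(h_1,h_2)b_{h_1h_2}$. The only differences are that you supply the routine verifications for the first part (which the paper dismisses as requiring no proof), including the useful observation that inverses of homogeneous elements are homogeneous, and that you derive $D=\ff$ by the elementary $\ff[d]$ argument, whereas the paper passes through the fact that $\mathbf{D}_e$ is a finite-dimensional central division algebra over the algebraically closed field $\ff$ --- both are standard and correct.
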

\begin{proof}
Only the second part requires a proof. We first remark that we will
rely heavily on the algebraic closeness of $\ff$.

$\mathbf{D}_{e}$ is a division algebra over $\ff$. Moreover, the
center of $\mathbf{D}_{e}$ must be a finite extension of $\ff$.
Hence it is equal to $\ff$. Thus $\mathbf{D}_{e}$ is a central simple
$\ff$-algebra. So $\mathbf{D}_{e}=\ff$. 

Let $H$ be the subset of $G$, such that $\mathbf{D}_{h}\neq0$ for
$h\in H$. If $0\neq b_{h}\in\mathbf{D}_{h}$, then multiplication
by $b_{h}$ from (say) the right is an $\ff$-linear homomorphism
from $\mathbf{D}_{e}$ to $\mathbf{D}_{h}$. Since $b_{h}$ is invertible,
this homomorphism is invertible. Thus $\mathbf{D}_{h}=\ff b_{h}$. 

Since the product of two invertible elements is also invertible, we
deduce that $H$ is a subgroup of $G$. 
\end{proof}
\selectlanguage{american}%
\begin{defn}
A $G$-graded algebra $A$ is called $G$\emph{-primitive} if there
is a $G$-graded module $V$ of $A$ such that the action of $A$
on $V$ is faithful and $G$-irreducible.
\end{defn}
By \lemref{graded_devision}, $\endo{}_{A}^{G}(V)=\oplus_{g\in H}Db_{g}=:\mathbf{D}$,
where $H$ is a subgroup of $G$, $D$ is a division $\ff$-algebra
and $b_{g}$ is an invertible element of $\mathbf{D}$. Notice that
$H=\mbox{supp}\mathbf{D}$ (the support of a $G$-graded algebra $A$
is the subgroup of $G$ generated by all $h\in G$ for which $A_{g}\neq\{0\}$).
\begin{thm}[$G$-graded density]
\label{thm:Density}Suppose $A$ is $G$-primitive $\ff$-algebra
acting on the $G$-graded module $V$ irreducibly and faithfully.
Let $\mathbf{D}:=\endo{}_{A}^{G}(V)$, $D=\mathbf{D}_{e}$, $H=\mbox{supp}\mathbf{D}$
and $V_{T}=\bigoplus_{t\in T}V_{t}$, where $T$ is a set of representatives
of right cosets of $H$ inside $G$. 

Then, for every homogeneous elements $v_{1},....,v_{m}\in V_{T}$
which are $D$-independent and for every homogeneous $u_{1},...,u_{m}\in V$,
such that $g_{0}:=(\deg u_{i})^{-1}\cdot\deg v_{i}$ is the same for
$i=1...m$, there is an $a\in A$ such that $av_{i}=u_{i}$ ($i=1...m$).
\end{thm}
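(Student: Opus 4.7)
The plan is to adapt Herstein's proof of the ungraded Jacobson density theorem to the graded setting by induction on $m$. It is convenient to prove a slightly stronger graded refinement: for every $g\in G$ and every $D$-independent homogeneous $v_{1},\ldots,v_{m}\in V_{T}$, the map $A_{g}\to\bigoplus_{i}V_{g\deg v_{i}}$, $a_{g}\mapsto(a_{g}v_{i})_{i}$, is surjective. The theorem follows by writing $a=\sum_{g}a_{g}$ with the sum ranging over the distinct values $g=(\deg u_{j})(\deg v_{j})^{-1}$; for each such $g$, the refinement (applied to the partition of the indices sharing that value, with zero targets on the others) supplies the required $a_{g}\in A_{g}$.

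The base case $m=1$ is graded irreducibility: $Av_{1}$ is a nonzero graded submodule of $V$, hence equals $V$, and this refines to $A_{g}v_{1}=V_{g\deg v_{1}}$ for every $g\in G$.

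For the inductive step, form the graded left ideal $I=\bigcap_{i<m}\mbox{ann}_{A}(v_{i})$. The submodule $Iv_{m}\subseteq V$ is graded, so by graded irreducibility $Iv_{m}\in\{0,V\}$. In the good case $Iv_{m}=V$, extracting the degree-$g\deg v_{m}$ component gives $I_{g}v_{m}=V_{g\deg v_{m}}$ for every $g$; combined with the inductive refinement for $v_{1},\ldots,v_{m-1}$, any target $(u_{1},\ldots,u_{m})\in\bigoplus_{i}V_{g\deg v_{i}}$ is realized by first hitting $(u_{1},\ldots,u_{m-1})$ via the refinement and then correcting the last coordinate by adding an appropriate element of $I_{g}$. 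The entire content therefore lies in ruling out $Iv_{m}=0$.

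Assume for contradiction $Iv_{m}=0$. The inductive refinement identifies $A/I$ with $\bigoplus_{i<m}V[s_{i}]$ as a graded $A$-module (writing $V[s]_{g}:=V_{gs}$ and $s_{i}:=\deg v_{i}$), producing a well-defined graded $A$-homomorphism of degree $e$, $\psi:\bigoplus_{i<m}V[s_{i}]\to V[s_{m}]$, $(av_{i})_{i<m}\mapsto av_{m}$. Decomposing $\psi=\sum_{i<m}\psi_{i}$ with $\psi_{i}:V[s_{i}]\to V[s_{m}]$ graded of degree $e$, and evaluating at $(v_{i})_{i<m}$ (the image of $1\in A$), yields $v_{m}=\sum_{i<m}\psi_{i}(v_{i})$ inside $V_{s_{m}}$. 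The main obstacle is to show $\psi_{i}=0$ whenever $s_{i}\neq s_{m}$ in $T$; by the second part of \lemref{Schur} this reduces to establishing $V[s_{i}]\ncong V[s_{m}]$ as graded $A$-modules for distinct coset representatives $s_{i},s_{m}\in T$ of right $H$-cosets, with $H=\mbox{supp}\,\mathbf{D}$. I would prove this non-isomorphism by studying the $A_{e}$-module structure---each $V_{t}$ is $A_{e}$-irreducible, since any nonzero $A_{e}$-submodule generates a nonzero graded $A$-submodule of $V$ and hence all of $V$---together with how the invertible homogeneous elements of $\mathbf{D}$ transport the fibers within their coset $V_{Ht}$; any graded isomorphism $V[s_{i}]\cong V[s_{m}]$ would force $Hs_{i}=Hs_{m}$, contradicting the choice of $T$. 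Granting this sub-claim, the surviving summands (those with $s_{i}=s_{m}$) satisfy $\psi_{i}\in\endo_{A}^{G}(V[s_{m}])_{e}=\mathbf{D}_{e}=D$, so $v_{m}=\sum_{i:s_{i}=s_{m}}\psi_{i}(v_{i})$ becomes a nontrivial $D$-linear relation contradicting the $D$-independence of the $v_{i}$ sharing the coset of $s_{m}$.
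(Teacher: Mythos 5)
Your proof is correct and reaches the same contradiction as the paper's, but it packages the argument differently. The paper follows Herstein's ungraded proof: it first reduces the theorem to a separation statement (for a finite-dimensional graded $D$-subspace $U\subseteq V_{T}$ and a homogeneous $v^{\prime}\in V_{T}\setminus U$, some homogeneous $a$ annihilates $U$ but not $v^{\prime}$), and in the bad case builds a single endomorphism $\tau\in\mathbf{D}$ by hand, via the well-definedness of $v=av_{0}\mapsto av^{\prime}$ for $a\in\mbox{ann}_{A}(U_{0})$; the punchline is that $\tau$ is homogeneous, so $v^{\prime}=\alpha b_{h_{0}}v_{0}$ with $h_{0}\in H=\mbox{supp}\,\mathbf{D}$, and $v_{0},v^{\prime}\in V_{T}$ forces $h_{0}=e$ and $v^{\prime}\in Dv_{0}$. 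You instead prove degreewise surjectivity of $a\mapsto(av_{i})_{i}$ directly, and in the bad case $Iv_{m}=0$ read the obstruction off the graded module isomorphism $A/I\cong\bigoplus_{i<m}V[s_{i}]$ and the decomposition $\psi=\sum_{i}\psi_{i}$: the cross-coset components die by \lemref{Schur} and the rest give a $D$-dependence. The two routes bottom out in the identical fact --- a nonzero degree-$e$ graded homomorphism $V[s_{i}]\to V[s_{m}]$ is a nonzero homogeneous element of $\mathbf{D}$ of degree $s_{i}^{-1}s_{m}$, hence $s_{i}^{-1}s_{m}\in H$, impossible for distinct members of the transversal $T$ --- and that one-line observation is all your ``non-isomorphism'' sub-claim needs; the detour through the $A_{e}$-module structure of the fibers $V_{t}$ is unnecessary. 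What your version buys is a cleaner target statement (surjectivity of $A_{g}\to\bigoplus_{i}V_{g\deg v_{i}}$ for each $g$, which also straightens out the degree bookkeeping in the theorem's hypothesis), at the cost of handling all $m-1$ components $\psi_{i}$ at once rather than one new vector per induction step. Two small repairs: evaluating $\psi$ ``at the image of $1$'' assumes $A$ is unital --- replace it by noting that $a\bigl(v_{m}-\sum_{i}\psi_{i}(v_{i})\bigr)=0$ for all $a\in A$ and that no nonzero element of $V$ is annihilated by all of $A$, which is exactly how the paper concludes $v^{\prime}=\tau(v_{0})$; and whether the condition is $s_{i}^{-1}s_{m}\in H$ or $s_{m}s_{i}^{-1}\in H$ depends on whether endomorphism degrees multiply on the right or on the left, a point on which the paper itself is loose, so fix one convention compatible with $A_{g}V_{h}\subseteq V_{gh}$ and state which cosets $T$ represents accordingly.
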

\begin{note}
It is clear that if $a=\sum_{g\in G}a_{g}$ is the decomposition of
$a$ into its homogeneous components, then $a_{g}v_{i}=0$ for $i=1...m$
and $g\neq g_{0}$. Thus, we may replace $a$ by $a_{g_{0}}$ and
thus assume that $a$ is homogeneous. 
\end{note}
\begin{proof}
We claim that it is enough to establish the following: Suppose $U$
is a finite dimensional (over $D$) $G$-graded subspace of $V_{T}$
and $v^{\prime}\notin U$ is a homogeneous element of $V_{T}$, then
there is a homogeneous $a\in A$ such that $aU=0$ but $av^{\prime}\neq0$.

To see this we proceed by induction on $m$. Let $U=\mbox{Span}_{D}\{v_{1},...,v_{m}\}$
and $v^{\prime}=v_{m+1}$. By induction we can find $a_{1}\in A$
such that $a_{i}v_{i}=u_{i}$ for $i=1...m$. Moreover, let $a_{2}\in A$
be a homogeneous element such that $a_{2}U=0$ and $a_{2}v^{\prime}\neq0$.
Since $A$ acts $G$-irreducibly on $V$ and $a_{2}v^{\prime}$is
homogeneous, it is possible to find $a_{3}\in A$ for which $a_{3}\left(a_{2}v^{\prime}\right)=u_{m+1}-a_{1}v^{\prime}$.
Thus, $a=a_{3}a_{2}+a_{1}$ moves each $v_{i}$ to $u_{i}$. 

We prove the claim by induction on $m=\dim_{D}U$. The case $m=0$
follows from faithfulness of the action of $A$ on $V$. Suppose $m>0$.
Hence, there is a homogeneous $v\in U$ and a subspace $U_{0}$ of
$U$ such that $U=U_{0}+Dv_{0}$ and $v_{0}\notin U_{0}$. 

By induction, the annihilator of $V_{0}$, $\mbox{ann}_{A}(U_{0})$,
does not annihilate any element of $V_{T}$ not in $U_{0}$. Moreover,
$\mbox{ann}_{A}(U_{0})$ is a left $G$-graded ideal of $A$, hence
by $G$-irreducibility, for every homogeneous $v\notin U_{0}$, $\mbox{ann}_{A}(U_{0})v=V$. 

Assume to the contrary that $\mbox{ann}_{A}(U)v^{\prime}=0$.We define
a map $\tau:V\to V$ by the following procedure: For a homogeneous
$0\neq v\in V$ we can always find a \uline{homogeneous} $a\in\mbox{ann}_{A}(U_{0})$
such that $av_{0}=v$. Using $a$ we declare $\tau(v)=av^{\prime}$.
This is well defined, since if $av_{0}=0$, then $a$ must be in $\mbox{ann}_{A}(U)$.
Hence, by our assumption, $a$ annihilates $v^{\prime}$, that is
$av^{\prime}=0$. Moreover, for every homogeneous $v$, $\left(\deg\tau(v)\right)^{-1}\deg v=\left(\deg v^{\prime}\right)^{-1}\deg v_{0}=:h_{0}$.
Hence $\tau$ is a homogeneous element of $\mathbf{D}.$ 

The map $\tau$ is clearly $\ff$-linear. Moreover, if $b\in A$ is
homogeneous, $bav_{0}=bv$, hence 
\[
\tau(bv)=bav^{\prime}=b\tau(v).
\]
Thus, $\tau\in\mathbf{D}$ and therefore equals to $\alpha b_{h_{0}}$,
where $\alpha\in D$. Furthermore, for $a\in A$: 
\[
av^{\prime}=\tau(av_{0})=a\tau(v_{0})\Rightarrow a(v^{\prime}-\tau(v_{0}))=0.
\]
Thus, 
\[
v^{\prime}=\tau(v_{0})\Rightarrow v^{\prime}=\alpha b_{h_{0}}v_{0}.
\]
Since both $v_{0}$ and $v^{\prime}$ are in $V_{T}$, we deduce that
$v^{\prime}=\alpha_{e}v_{0}$, which is absurd. 
\end{proof}
We turn to consider $G$-graded $G$-simple algebras. 
\begin{defn}
$A$ is $G$\emph{-simple} if $A^{2}\neq0$ and every $G$-graded
two-sided ideal of $A$ is trivial.
\end{defn}
It is easy to check that every $G$-simple algebra is also $G$-primitive.
Indeed, Choose $V=A$. The action is, by definition, is $G$-graded
irreducible. Moreover, if there was a homogeneous $a\in A$ such that
$aA=0$, then $Aa$ was a two-sided $G$-graded ideal of $A$. Therefore,
$aA=0$ or $A$. The latter is impossible since then $A^{2}=AaA=0$.
So $Fa$ is a two-sided $G$-graded ideal of $A$. As before, the
conclusion is that $\ff a=0\Rightarrow a=0$. Thus, if $a\in A$ is
not necessary a homogeneous element it is impossible that $aA=0$.
All in all, the action is also faithful.

In order to give an example of a $G$-simple algebra, we define:
\selectlanguage{english}%
\begin{defn}
\label{def:matrix-grading}Let $B$ be any $G$-graded $\ff$-algebra,
a natural number $n$ and $\gg=(g_{1},...,g_{n})\in G^{n}$. Denote
by $M_{\gg}(B)$ the $\ff$-algebra $B\otimes M_{n}(\ff)$, $G$-graded
by 
\[
M_{\mathfrak{\gg}}(B)_{g}=\mbox{Span}_{\ff}\{b\otimes e_{i,j}|\,b\in B_{h},\,g=g_{i}^{-1}hg_{j}\}.
\]
\end{defn}
\begin{note}
\label{note:MgBSimple}It is easy to check that if $B$ is $G$-simple,
then so does $M_{\gg}(B)$.
\end{note}
\selectlanguage{american}%
\begin{defn}
Let $A$ be a $G$-graded $\ff$-algebra. Denote by $A^{op}$ \emph{the
opposite $G$-graded $\ff$-algebra of $A$} defined by $A^{op}=A$
as $\ff$-vector spaces; the multiplication is given by $a_{1}a_{2}=a_{2}\cdot a_{1}$,
where $\cdot$ is the multiplication in $A$; the $G$-grading is
given via: 
\[
A_{g}^{op}=A_{g^{-1}}
\]
for all $g\in G$.
\end{defn}
\begin{lem}
\label{lem:AOppositeSimple}Let $A$ be $G$-simple $\ff$-algebra.
Then, $A^{op}$ is also $G$-simple. Furthermore, if $A$ is a $G$-division
$\ff$-algebra, so does $A^{op}$.
\end{lem}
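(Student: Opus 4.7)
The plan is to observe that taking opposites is essentially a cosmetic operation on the lattice of graded two-sided ideals, so $G$-simplicity is preserved, and similarly that invertibility of a homogeneous element is symmetric in the two factors of $ab=1=ba$.

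For the first assertion, I would start by checking that $(A^{op})^2\neq 0$: as underlying $\ff$-vector spaces, $(A^{op})^2$ coincides with $A^{2}$, which is non-zero since $A$ is $G$-simple. Next I would show that every $G$-graded two-sided ideal $I$ of $A^{op}$ is also a $G$-graded two-sided ideal of $A$. Indeed, if $I=\bigoplus_{g}I_{g}$ with $I_{g}\subseteq A^{op}_{g}=A_{g^{-1}}$, then in the $A$-grading we have $I=\bigoplus_{h}I_{h^{-1}}$, still a graded decomposition. Moreover, left multiplication in $A^{op}$ equals right multiplication in $A$ (and vice versa), so $I$ is both a left and right ideal of $A$. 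Then $G$-simplicity of $A$ forces $I\in\{0,A\}$.

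For the second assertion, I would use the fact that in a $G$-graded algebra, the inverse (when it exists) of a homogeneous element is automatically homogeneous: if $a\in A_{h}$ and $ab=1$, decomposing $b=\sum_{g}b_{g}$ gives $\sum_{g}ab_{g}=1$ with $ab_{g}\in A_{hg}$, hence $ab_{g}=0$ for $g\neq h^{-1}$, and multiplying by a right inverse (or just noting that $a$ is left-invertible) yields $b_{g}=0$ for $g\neq h^{-1}$. Given this, if $a\in A^{op}_{g}=A_{g^{-1}}$ is non-zero homogeneous, then $a$ has a two-sided inverse $b\in A_{g}$ in $A$, and the same element $b$ satisfies $a\cdot_{op}b=ba=1$ and $b\cdot_{op}a=ab=1$ in $A^{op}$, with $b\in A^{op}_{g^{-1}}$ being homogeneous; so $A^{op}$ is $G$-division.

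Neither step is a genuine obstacle; the entire lemma is a verification that the definitions of ``$G$-simple'' and ``$G$-division'' are manifestly self-dual under the standard opposite-algebra construction once one tracks carefully that the degree $g$ of $A^{op}$ corresponds to the degree $g^{-1}$ of $A$, and that ``left'' and ``right'' are swapped. The only mildly subtle point to watch is ensuring that the graded decomposition of $I$ transports correctly between the two gradings, which is immediate from the definition $A^{op}_{g}=A_{g^{-1}}$.
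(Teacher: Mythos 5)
Your proof is correct. The paper actually omits the proof of this lemma entirely (it reads ``The proof is omitted''), and your argument is precisely the routine verification the author is taking for granted: graded two-sided ideals of $A^{op}$ and of $A$ coincide as subspaces once the grading is re-indexed by $g\mapsto g^{-1}$, and a two-sided inverse in $A$ serves as a two-sided inverse in $A^{op}$, so both $G$-simplicity and the $G$-division property transfer. Your extra observation that the inverse of a homogeneous element is automatically homogeneous is also correct and consistent with how the paper uses homogeneous inverses elsewhere (e.g.\ in the proof of Lemma \ref{lem:StructureOfGDivision}).
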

\begin{proof}
The proof is omitted. 
\end{proof}
\selectlanguage{english}%
\begin{notation}
For an algebra $A$, we denote by $Z(A)$ the center of $A$. Moreover,
If $A$ is $G$-graded, the center $Z(A)$ will (usually) \textbf{not}
be graded by $G$, unless $G$ is abelian. Nevertheless, we denote
by $Z(A)_{e}$ all elements of degree $e$ inside $Z(A)$. 
\end{notation}
\begin{defn}
A finite dimensional $G$-simple algebra $A$ over $\ff$ is called
\emph{$G$-central-simple algebra }if $\ff=Z(A)_{e}$.
\end{defn}
\begin{prop}
\label{prop:tensorGSimple}Suppose $A$ is a $G_{1}$-central-simple
algebra over $\ff$. If $B$ is $G_{2}$-simple unital algebra over
$\ff$, then $A\otimes_{\ff}B$ is $G_{1}\times G_{2}$-central simple
algebra over $Z(B)_{e}$ (here, $(A\otimes B)_{(g_{1},g_{2})}=A_{g_{1}}\otimes B_{g_{2}}$).
In particular, if $B$ is a field (considered graded by $\{e\}$),
then $A\otimes_{\ff}B$ is $G$-simple algebra over $Z(A\otimes_{\ff}B)_{e}=B$.
\end{prop}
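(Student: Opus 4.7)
The plan is to prove $G_1\times G_2$-simplicity first and then compute $Z(A\otimes B)_{(e,e)}$. To establish simplicity, let $I$ be a non-zero $G_1\times G_2$-graded ideal and choose a $(G_1\times G_2)$-homogeneous $0\neq x\in I$ of degree $(g_1,g_2)$, written $x=\sum_{i=1}^n a_i\otimes b_i$ with $a_i\in A_{g_1}$, $b_i\in B_{g_2}$, selecting $x$ and its representation so that $n$ is minimal over all $(G_1\times G_2)$-homogeneous non-zero elements of $I$. Minimality forces both families $\{a_i\}$ and $\{b_i\}$ to be $\ff$-linearly independent, since any dependence relation would shorten the representation.

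The next step is to replace $a_1$ by $1$. Let $W:=\mbox{Span}_{\ff}(b_1,\ldots,b_n)\subseteq B_{g_2}$ and let $\pi_j:A\otimes W\to A$ be the coordinate projections relative to the basis $\{b_i\}$. Then $J_1:=\pi_1(I\cap(A\otimes W))$ is a two-sided $G_1$-graded ideal of $A$ (graded because $W$ is concentrated in the single $G_2$-component $B_{g_2}$, so $I\cap(A\otimes W)$ decomposes as $\bigoplus_g I\cap(A_g\otimes W)$ and $\pi_1$ sends the $g$-component into $A_g$); it is non-zero as $a_1\in J_1$, so $J_1=A$ by $G_1$-simplicity. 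Pulling $1\in A_e\subseteq J_1$ back yields $y\in I$ homogeneous of degree $(e,g_2)$ of the form $y=1\otimes b_1+\sum_{i=2}^n c_i\otimes b_i$ with every $c_i\in A_e$.

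Now comes the key commutator step: for any homogeneous $d\in A$, the element $[d\otimes 1,y]=\sum_{i=2}^n(dc_i-c_id)\otimes b_i$ lies in $I$, is homogeneous of degree $(\deg d,g_2)$, and has length at most $n-1$. By minimality of $n$ this commutator must vanish; since the $b_i$ are $\ff$-linearly independent, each $c_i$ commutes with every homogeneous $d\in A$, hence $c_i\in Z(A)\cap A_e=Z(A)_e=\ff$ from the $G_1$-central-simple hypothesis. Therefore $y=1\otimes b'$ for $b'=b_1+\sum_{i\geq 2}\alpha_i b_i\in B_{g_2}$, which is non-zero by $\ff$-independence of the $b_i$. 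Since $B$ is $G_2$-simple and $b'$ is a non-zero homogeneous element, $Bb'B=B$, so $1\otimes B\subseteq I$, and multiplying on the left by $A\otimes 1$ gives $I=A\otimes B$.

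For the center, any $z\in Z(A\otimes B)_{(e,e)}$ can be written $z=\sum a_i\otimes b_i$ with $b_i\in B_e$ linearly independent and $a_i\in A_e$. Commutation with $A\otimes 1$ and independence of the $b_i$ forces each $a_i\in Z(A)_e=\ff$, whence $z=1\otimes b$; commutation with $1\otimes B$ then forces $b\in Z(B)_e$. Conversely $1\otimes Z(B)_e$ sits inside the degree-$(e,e)$ center, giving $Z(A\otimes B)_{(e,e)}=Z(B)_e$. The ``in particular'' clause then follows because for a field $B$ (concentrated in degree $e$) we have $Z(B)_e=B$. The main obstacle is the careful bookkeeping of degrees that guarantees, after the first reduction, that $y$ truly lies in $A_e\otimes B_{g_2}$, as this placement is exactly what allows the centrality argument $c_i\in Z(A)_e=\ff$ to be invoked.
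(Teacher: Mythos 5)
Your proof is correct and follows essentially the same route as the paper: take a shortest homogeneous element of the graded ideal, normalize its leading coefficient to $1\in A_{e}$, use the commutator trick together with $Z(A)_{e}=\ff$ to push the remaining coefficients into $\ff$, and finish with the $G_{2}$-simplicity of $B$; the center computation is likewise identical. The only cosmetic difference is the normalization step, where you extract $1$ from the projection ideal $\pi_{1}\left(I\cap(A\otimes W)\right)$ instead of multiplying through by homogeneous elements $c_{p}^{\prime},c_{p}^{\prime\prime}$ with $\sum_{p}c_{p}^{\prime}a_{1}c_{p}^{\prime\prime}=1$ and $\deg c_{p}^{\prime}\cdot g_{1}\cdot\deg c_{p}^{\prime\prime}=e$ as the paper does.
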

\begin{proof}
If $0\neq I$ is a $G_{1}\times G_{2}$-graded ideal of $A\otimes B$,
and $0\neq x=\sum_{j=1}^{t}a_{j}\otimes b_{j}\in I_{(g_{1},g_{2})}$
is shortest element in $I$ (that is, for every $0\neq x^{\prime}=\sum_{j=1}^{t^{\prime}}a_{j}^{\prime}\otimes b_{j}^{\prime}\in I$,
one has $t^{\prime}\geq t$). 

It is easy to see that $b_{1},...,b_{t}$ are independent over $\ff$.
Furthermore, since $Aa_{1}A=A$, one can write $\sum_{p}c_{p}^{\prime}a_{1}c_{p}^{\prime\prime}=1$.
We can assume that $\deg c_{p}^{\prime}\cdot g_{1}\cdot\deg c_{p}^{\prime\prime}=e$
for all $p$. Hence, 
\[
\sum_{p}c_{p}^{\prime}\otimes1\cdot x^{\prime}\cdot c_{p}^{\prime\prime}\otimes1=1\otimes b_{1}+d_{2}\otimes b_{2}+\cdots+d_{t}\otimes b_{t},
\]
where $d_{j}=\sum_{p}c_{p}^{\prime}a_{j}c_{p}^{\prime\prime}\in A_{e}$.
Thus, for every $a\in A$, 
\[
[a,1\otimes b_{1}+d_{2}\otimes b_{2}+\cdots+d_{t}\otimes b_{t}]\in I
\]
 is of length shorter than $t$, hence $[a,d_{j}]=0$ for $j=2,...,t$.
As a result, $d_{j}\in\ff$. Hence, 
\[
z=1\otimes(b_{1}+d_{2}b_{2}+\cdots+d_{t}b_{t}).
\]
Since $b_{1},...,b_{t}$ are linearly independent over $\ff$, we
get that $z\neq0$. As a result, $t=1$ and (may assume) $x=1\otimes b_{1}$.
The $G$-simplicity of $B$ and the fact that $b_{1}$ is homogenous
forces $A\otimes B\cdot x\cdot A\otimes B$. All in all, $I=A\otimes B$.

Next, it is clear that $Z(B)_{e}\subseteq Z(A\otimes B)_{e}$. For
the other direction, 
\[
Z(A\otimes B)_{e}\ni0\neq z=\sum_{i=1}^{t}a_{i}\otimes b_{i},
\]
where $t$ is minimal. As before, $b_{1},...,b_{t}$, are independent
over $\ff$. For $a\in A$, 
\[
0=[z,a]=\sum_{i=1}^{t}[a_{i},a]\otimes b_{i}.
\]
Thus, $[a_{i},a]=0$ for $i=1,...,t$. Hence, $a_{i}\in Z(A)_{e}=\ff$.
As a result, $t=1$ and $z=1\otimes b_{1}\in B$ As a result, $z\in Z(B)$. 
\end{proof}
\selectlanguage{american}%
\begin{lem}
\label{cor:specific-shape}Suppose $A$ is $G$-primitive $\ff$-algebra
acting on the $G$-graded module $V$ irreducibly and faithfully.
Let $\mathbf{D}:=\endo{}_{A}^{G}(V)$ $D=\mathbf{D}_{e}$, $H=\mbox{supp}\mathbf{D}$
and $V_{T}=\bigoplus_{t\in T}V_{t}$, where $T$ is a set of representatives
of right cosets of $H$ inside $G$. Suppose further that $V_{T}$
is finite dimensional over $D$.

Then, $A$ is a $G$-simple algebra over the field $\kk=Z(A)_{e}$
and $\dim_{\kk}A_{e}<\infty$. Furthermore, $A=M_{\gg}(\mathbf{D})$
for some \foreignlanguage{english}{$\gg=(g_{1},...,g_{n})\in G^{n}$
and a finite dimensional $G$-division $\kk$-algebra $\mathbf{D}$. }
\end{lem}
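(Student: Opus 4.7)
The plan is to use the graded density theorem (Theorem \thmref{Density}) to realize $A$ as the full algebra $\endo{}_{\mathbf{D}}^{G}(V)$ of graded $\mathbf{D}$-linear endomorphisms of $V$, and then to identify this with a matrix algebra of the form $M_{\gg}(\mathbf{D})$ from Definition \defref{matrix-grading}. First I pick a homogeneous $D$-basis $v_{1},\dots,v_{n}$ of $V_{T}$ with $\deg v_{i}=g_{i}\in T$, and set $\gg=(g_{1},\dots,g_{n})\in G^{n}$. Writing $\mathbf{D}=\bigoplus_{h\in H}Db_{h}$ by Lemma \lemref{graded_devision}, each invertible $b_{h}$ induces a bijection $V_{t}\to V_{ht}$; combined with the decomposition $G=\bigsqcup_{h\in H}Ht$ coming from $T$ being a set of right coset representatives, this upgrades $\{v_{1},\dots,v_{n}\}$ to a homogeneous $\mathbf{D}$-basis of the entire space $V$, making $V$ an $n$-dimensional $\mathbf{D}$-module.

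Next I consider the canonical $G$-graded algebra map $\Psi\colon A\to\endo{}_{\mathbf{D}}^{G}(V)$ given by left multiplication; it is well-defined because $\mathbf{D}=\endo{}_{A}^{G}(V)$ commutes with the $A$-action on $V$, and injective by the faithfulness hypothesis. For surjectivity, given a homogeneous $\phi\in\endo{}_{\mathbf{D}}^{G}(V)$ of degree $g_{\phi}$, I apply Theorem \thmref{Density} to the $D$-independent tuple $(v_{1},\dots,v_{n})\subset V_{T}$ with targets $(\phi(v_{1}),\dots,\phi(v_{n}))$, whose degrees $g_{\phi}g_{i}$ satisfy the uniform degree condition of the theorem by construction; this yields $a\in A$ with $av_{i}=\phi(v_{i})$, and since $\Psi(a)$ and $\phi$ are both $\mathbf{D}$-linear and agree on a $\mathbf{D}$-basis, they coincide. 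Expanding any graded endomorphism as $\phi(v_{j})=\sum_{i}v_{i}d_{ij}$ makes composition correspond to matrix multiplication in $M_{n}(\mathbf{D}^{op})$, and the grading condition $\phi(V_{g_{j}})\subseteq V_{g_{\phi}g_{j}}$ translates to $\deg d_{ij}=g_{i}^{-1}g_{\phi}g_{j}$, which is precisely Definition \defref{matrix-grading}. Replacing $\mathbf{D}^{op}$ by the $G$-division algebra $\mathbf{D}$ via Lemma \lemref{AOppositeSimple}, we obtain $A\cong M_{\gg}(\mathbf{D})$ as $G$-graded $\ff$-algebras.

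The remaining conclusions follow quickly. By Note \noteref{MgBSimple}, $M_{\gg}(\mathbf{D})$ is $G$-simple (since $\mathbf{D}$ is $G$-simple, being $G$-division), hence so is $A$. For $\kk=Z(A)_{e}$ being a field: a nonzero $z\in Z(A)_{e}$ generates the graded two-sided ideal $zA=Az$, which by $G$-simplicity equals $A$, so $1=za$ for some $a\in A$; taking the $e$-component gives $1=z\,a_{e}$, and a short centrality check places $a_{e}$ in $Z(A)_{e}$, so $z$ is invertible in $\kk$. Finally, the matrix description yields $A_{e}=\bigoplus_{(i,j):\,g_{i}g_{j}^{-1}\in H}\mathbf{D}_{g_{i}g_{j}^{-1}}\otimes e_{ij}$, and $\kk$ is identified with $Z(\mathbf{D})_{e}\subseteq D$; the finite-dimensionality of $A_{e}$ and of $\mathbf{D}$ over $\kk$ then reduces to the corresponding statement for the division algebra $D$ over $\kk\subseteq Z(D)$. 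I anticipate this last step---leveraging the hypothesis ``$V_{T}$ finite over $D$'' to control $D$ itself (and the support $H$) over $\kk$---to be the main technical obstacle, alongside the delicate bookkeeping of degree conventions in the application of the density theorem.
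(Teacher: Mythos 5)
Your proposal follows essentially the same route as the paper: the graded density theorem is used to identify $A$ (up to passing to the opposite algebra) with $\endo{}_{\mathbf{D}}^{G}(V)\cong M_{\gg}(\mathbf{D}^{op})$, after which $G$-simplicity and the identification of $\kk$ come from \noteref{MgBSimple} and the description of the $e$-component, exactly as in the paper --- which merely works with $A^{op}$ and right multiplication instead of $A$ and left multiplication, and reads off $\kk=Z(A)_{e}=Z(D)$ directly from $A_{e}^{op}\subseteq D\otimes_{D}\endo{}_{D}(V_{T})$ rather than via your invertibility argument. The one step you flag as unresolved --- deducing that $D$, and hence $A_{e}$, is finite dimensional over $\kk$ from the sole hypothesis that $V_{T}$ is finite dimensional over $D$ --- is not addressed in the paper's proof either (there it only enters later, via the PI hypothesis in the Kaplansky corollary), so you are not missing anything the paper actually supplies.
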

\begin{proof}
Consider the map $\phi:A^{op}\to\endo{}_{\mathbf{D}}^{G}(V)$ given
by right multiplication. It is easy to check that this is a $G$-graded
homomorphism. Since the action is faithful the map is an embedding. 

By \lemref{AOppositeSimple} $A^{op}$ is $G$-simple and thus also
$G$-primitive. Moreover, suppose $v_{1},...,v_{m}$ is a homogeneous
basis of $V_{T}$. Then any homogeneous $f\in\endo{}_{\mathbf{D}}(V)$
is completely determined by its action on $v_{1},...,v_{m}$. Write
$u_{1}=f(v_{1}),...,u_{l}=f(v_{m})$. Then clearly $(\deg u_{i})^{-1}\cdot\deg v_{i}$
is the same for $i=1...m$. Hence, by \thmref{Density}, there is
an $a\in A^{op}$ such that $\phi(a)=f$. Therefore, $\phi$ is a
$G$-graded isomorphism. 

It is easy to check that 
\[
\endo{}_{\mathbf{D}}(V)\backsimeq\mathbf{D}\otimes_{D}\endo{}_{D}(V_{T})\backsimeq\mathbf{D}\otimes_{D}M_{\gg}(D)\backsimeq\mathbf{D}\otimes_{Z(D)}M_{\gg}(Z(D)),
\]
where $\gg=(t_{1},...,t_{1},...,t_{l},...,t_{l})$, where $\{t_{1},...,t_{l}\}=T$
and each $t_{i}$ appears exactly $\dim_{D}V_{t_{i}}$ times in $\gg$.
Now, notice that if $e=g_{i}^{-1}hg_{j}$ (here $h\in H$ and $g_{i},g_{j}\in T$),
then $g_{i}=hg_{j}$. Therefore, $g_{i}=g_{j}$ and $h=e$ (recall
that $T$ is a transveral of $H$ in $G$). As a result, $A_{e}^{op}\subseteq D\otimes_{D}\endo{}_{D}(V_{T})$.
So $\kk=Z(A^{op})_{e}=Z(D)$, which is a field (since $D$ is a division
ring). All in all, $A$ is $G$-isomorphic to $M_{\gg}(\mathbf{D}^{op})$
which is $G$-simple due to \lemref{AOppositeSimple} and \noteref{MgBSimple}.
\end{proof}
For the next Corollary we will need the following definition.
\begin{defn}
A $G$-graded $\ff$-algebra $A$ is called \emph{$G$-Artinian} if
every descending sequence of $G$-graded ideals of $A$: $I_{1}\supseteq I_{2}\supseteq\cdots$
eventually stabilizes.
\end{defn}
\begin{note}
If $\dim_{\ff}A<\infty$, it is clear that $A$ is $G$-Artinian. 
\end{note}
\begin{thm}[$G$-graded Wedderburn's theorem]
\label{thm:graded-wedderburn} Suppose $A$ is $G$-simple and (left)
$G$-Artinian $\ff$-algebra. Then, $A=M_{\gg}(\mathbf{D})$, where
\foreignlanguage{english}{$\gg=(g_{1},...,g_{n})\in G^{n}$ and $\mathbf{D}$
is a finite dimensional $G$-division $\ff$-algebra. If moreover
}$\kk=Z(A)_{e}$ is algebraically closed,\foreignlanguage{english}{
then $\mathbf{D}=\ff^{\alpha}H$, where $H=\mbox{supp}\mathbf{D}$
and $\alpha\in Z^{2}(H,\ff^{\times})$. }
\end{thm}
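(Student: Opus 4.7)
The plan is to reduce to Lemma~\corref{specific-shape}. Since $A$ is $G$-simple, it is $G$-primitive (as observed just after the definition of $G$-simplicity); let $V$ be a faithful $G$-irreducible $G$-graded $A$-module. Set $\mathbf{D}:=\endo{}_{A}^{G}(V)$, $D:=\mathbf{D}_{e}$, $H:=\mbox{supp}\,\mathbf{D}$, and fix a transversal $T$ of $H$ in $G$; write $V_{T}:=\bigoplus_{t\in T}V_{t}$. By Lemma~\corref{specific-shape}, it suffices to establish that $V_{T}$ is finite-dimensional over $D$, and then everything in the first assertion follows: $A\cong M_{\gg}(\mathbf{D})$ for some $\gg\in G^{n}$ with $\mathbf{D}$ a finite-dimensional $G$-division $\ff$-algebra.

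The translation from the $G$-Artinian hypothesis to finite-dimensionality of $V_{T}$ proceeds via annihilators. Suppose toward a contradiction that $v_{1},v_{2},\ldots$ is an infinite $D$-linearly independent sequence of homogeneous elements of $V_{T}$, and set
\[
L_{n}:=\{\,a\in A\mid av_{i}=0\text{ for }i=1,\ldots,n\,\}.
\]
Each $L_{n}$ is a $G$-graded left ideal of $A$ (the $v_{i}$ being homogeneous) and $L_{1}\supseteq L_{2}\supseteq\cdots$. By the $G$-graded density theorem (Theorem~\thmref{Density}), since $v_{n+1}\notin\mbox{Span}_{D}\{v_{1},\ldots,v_{n}\}$ there exists $a\in A$ annihilating $v_{1},\ldots,v_{n}$ with $av_{n+1}\neq0$; thus $L_{n}\supsetneq L_{n+1}$. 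This infinite strictly descending chain of graded left ideals contradicts $G$-Artinianness, completing the first half.

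For the refinement when $\kk:=Z(A)_{e}$ is algebraically closed, recall from the proof of Lemma~\corref{specific-shape} that $\kk=Z(D)$, so $\mathbf{D}$ is naturally a $\kk$-algebra; the same lemma gives $\dim_{\kk}\mathbf{D}_{e}\le\dim_{\kk}A_{e}<\infty$. Now Lemma~\lemref{StructureOfGDivision}, applied with $\kk$ in the role of the algebraically closed base field, produces a cocycle $\alpha\in Z^{2}(H,\kk^{\times})$ such that $\mathbf{D}\cong\kk^{\alpha}H$ as $G$-graded $\kk$-algebras.

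The main obstacle is the finite-dimensionality step: one must convert the $G$-Artinian property of left ideals of $A$ into a bound on $\dim_{D}V_{T}$. The $G$-graded density theorem is exactly the bridge that allows annihilators of $D$-independent homogeneous tuples in $V_{T}$ to produce genuinely distinct graded left ideals, and without it there is no obvious route from the descending chain condition on graded left ideals to a dimension bound on the structure module $V_{T}$.
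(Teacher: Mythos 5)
Your proof is correct and follows essentially the same route as the paper: reduce to Lemma \ref{cor:specific-shape}, use annihilators of growing $D$-independent families in $V_{T}$ to build a strictly descending chain of graded left ideals contradicting $G$-Artinianness, then invoke Lemma \ref{lem:StructureOfGDivision} for the algebraically closed case. The only (immaterial) differences are that the paper takes $V=A$ explicitly and justifies strictness of the chain "by irreducibility," whereas you cite the intermediate claim from the proof of the density theorem.
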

\begin{proof}
We want to use \corref{specific-shape} with $V=A$ (as a $G$-graded
left $A$-module). Suppose $V_{T}$ is of infinite dimension over
$\ff=D$ (see \lemref{StructureOfGDivision}). The set 
\[
R_{n}=\{a\in A|\,aU_{n}=0\},
\]
where $U_{n}$ is an $n$-dimensional $G$-vector space inside $V_{T}$,
is clearly a left $G$-graded ideal of $A$. By choosing $U_{1}\subseteq U_{2}\subseteq\cdots$
we get $R_{1}\supseteq R_{2}\supseteq\cdots$. Moreover, by irreducibility
we know that these are strict inclusions. Hence, we reached a contradiction. 

The statement now follows from \corref{specific-shape} and \lemref{StructureOfGDivision}.
\end{proof}
\begin{cor}[$G$-graded Kapalansky]
\label{cor:kapalnsky}If $A$ is $G$-primitive $\ff$-algebra and
moreover satisfies an ordinary PI of degree $d$, then the conclusion
of the previous corollary holds. Furthermore, $\kk=Z(A)_{e}$ is a
field and $\dim_{\kk}A_{T}\leq\frac{d}{2}$.
\end{cor}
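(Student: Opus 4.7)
The plan is to deduce the corollary from \corref{specific-shape} by proving that $V_{T}$ is finite dimensional over $D$, with the explicit bound $\dim_{D}V_{T}\leq d/2$. Once this is in hand, \corref{specific-shape} directly yields that $\kk=Z(A)_{e}$ is a field, that $A=M_{\gg}(\mathbf{D})$ for some finite dimensional $G$-division $\kk$-algebra $\mathbf{D}$ and some $\gg\in G^{n}$, and that $A$ is $G$-simple and finite dimensional over $\kk$.

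The heart of the argument is to convert the graded density theorem into matrix units despite its degree restriction. Fix homogeneous $D$-independent vectors $v_{1},\ldots,v_{n}\in V_{T}$. For each ordered pair $(i,j)$, I apply \thmref{Density} to $v_{1},\ldots,v_{n}$ with targets $u_{j}:=v_{i}$ and $u_{k}:=0$ for $k\neq j$. Because only a single target is nonzero, the degree compatibility condition $(\deg u_{k})^{-1}\deg v_{k}=g_{0}$ holds trivially (with $g_{0}:=(\deg v_{i})^{-1}\deg v_{j}$), and density provides a homogeneous $a_{ij}\in A$ satisfying $a_{ij}v_{k}=\delta_{jk}v_{i}$ for every $k$. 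On the $D$-submodule $U:=\mbox{Span}_{D}(v_{1},\ldots,v_{n})\cong D^{n}$, the operators $a_{ij}$ act precisely as the matrix units $E_{ij}$.

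Let $B\subseteq A$ be the subalgebra generated by the $a_{ij}$. Since $a_{ij}U\subseteq U$, the subalgebra $B$ stabilizes $U$ and the restriction map $B\to\mbox{End}_{\ff}(U)$ is a well-defined $\ff$-algebra homomorphism. Its image contains $\sum_{i,j}\ff E_{ij}=M_{n}(\ff)$. As a subalgebra of $A$, $B$ satisfies the polynomial identity of degree $d$, hence so does $M_{n}(\ff)$. By the Amitsur--Levitzki theorem (or the elementary fact that $M_{n}(\ff)$ admits no multilinear identity of degree less than $2n$), this forces $n\leq d/2$. Thus $\dim_{D}V_{T}\leq d/2$, and \corref{specific-shape} finishes the argument.

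The main subtlety is precisely the degree restriction in \thmref{Density}, which prevents one from simultaneously prescribing several nonzero target values at vectors of differing degrees. The workaround is to invoke density ``one matrix entry at a time'', producing each $a_{ij}$ separately: with only one nonzero target, the degree compatibility is vacuous, yet the resulting sparse family of elements still reassembles into a genuine copy of $M_{n}(\ff)$ after restriction to $U$, which is all that is needed for the PI bound.
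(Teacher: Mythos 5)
Your proof is correct and takes essentially the same route as the paper: both reduce the corollary to the bound $\dim_{D}V_{T}\leq d/2$ by using the graded density theorem to realize $M_{n}(\ff)$ as (a subalgebra of) a homomorphic image of a subring of $A$, then contradict the fact that $M_{n}(\ff)$ admits no identity of degree less than $2n$, and finally invoke \corref{specific-shape}. Your one deviation is a welcome refinement: where the paper simply asserts an epimorphism from the stabilizer $R=\{a\in A\,|\,aU\subseteq U\}$ onto $\endo_{D}(U)$ without commenting on the degree-compatibility restriction in \thmref{Density}, you construct the matrix units $a_{ij}$ one at a time with a single nonzero target, which makes that restriction harmless and renders the step fully explicit.
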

\begin{proof}
We use the notation of \thmref{Density}. It is enough to show that
$\dim_{D}V_{T}\leq d/2$. Indeed, otherwise there is an $d/2<n$-dimensional
$G$-graded $D$-subspace $U$ of $V_{T}$ and consider the subring
of $A$ given by 
\[
R=\{a\in A|\,aU\subseteq U\}.
\]
By the previous theorem there is an \textbf{epimorphism} from $R$
to $\endo{}_{D}(U)=M_{n}(D)$. However, $M_{n}(\ff)$ satisfies a
PI of degree $d<2n$ or higher. This contradicts the assumption that
$A$ satisfies a PI of degree $d$.
\end{proof}
\begin{thm}
\label{thm:GsimpleAndPI}Suppose $A$ is a $G$-simple PI $\ff$-algebra,
where $G$ is finite. Then, there is a field $\ff\subseteq\kk$ and
a $G$-graded $\kk$-algebra $B=\kk^{\alpha}H\otimes M_{\gg}(\kk)$
such that 
\[
id_{G,\ff}(A)=id_{G,\ff}(B).
\]
\end{thm}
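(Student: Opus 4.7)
The plan is to apply the graded structure theorems of this section directly to $A$, then extend scalars to an algebraic closure, and finally argue that extending scalars does not alter the ideal of graded $\ff$-identities. Since $A$ is $G$-simple and PI, it is in particular $G$-primitive, so $\corref{kapalnsky}$ yields that $\kk_{0}:=Z(A)_{e}$ is a field over which $A$ is finite-dimensional. The graded Wedderburn theorem $\thmref{graded-wedderburn}$ then writes $A\simeq M_{\gg_{0}}(\mathbf{D}_{0})$ for some $\gg_{0}\in G^{n_{0}}$ and some finite-dimensional $G$-division $\kk_{0}$-algebra $\mathbf{D}_{0}$; in particular $A$ is $G$-central-simple over $\kk_{0}$, and $\ff\subseteq\kk_{0}$.

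Next, I take $\kk$ to be an algebraic closure of $\kk_{0}$, regarded as a trivially $\{e\}$-graded algebra, and set $B:=A\otimes_{\kk_{0}}\kk$ with the induced $G$-grading. $\propref{tensorGSimple}$ applies (with $G_{1}=G$, $G_{2}=\{e\}$) and shows that $B$ is $G$-central-simple over $\kk$ with $Z(B)_{e}=\kk$. Since $\dim_{\kk}B=\dim_{\kk_{0}}A<\infty$, the algebra $B$ is $G$-Artinian, so $\thmref{graded-wedderburn}$ applied over the algebraically closed field $\kk$ produces
\[
B\simeq M_{\gg}(\kk^{\alpha}H)\simeq \kk^{\alpha}H\otimes_{\kk}M_{\gg}(\kk)
\]
for some subgroup $H\subseteq G$, some $\alpha\in Z^{2}(H,\kk^{\times})$ and some $\gg\in G^{n}$. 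This realises $B$ in the form claimed by the statement.

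It remains to verify the equality $id_{G,\ff}(A)=id_{G,\ff}(B)$. The map $a\mapsto a\otimes 1$ realises $A$ as a $G$-graded $\ff$-subalgebra of $B$, which gives the inclusion $id_{G,\ff}(B)\subseteq id_{G,\ff}(A)$ for free. For the reverse inclusion I use $\mathrm{char}\,\ff=0$ to reduce the problem to multilinear graded identities: given a multilinear $\ff$-identity $f$ of $A$ in variables of prescribed homogeneous degrees $h_{1},\dots,h_{n}$, any graded substitution in $B$ can, by multilinearity and $\kk$-linearity, be taken to consist of elementary tensors $a_{i}\otimes\lambda_{i}$ with $a_{i}\in A_{h_{i}}$ and $\lambda_{i}\in\kk$, whereupon the central scalars factor out and the evaluation collapses to $f(a_{1},\dots,a_{n})\otimes\lambda_{1}\cdots\lambda_{n}=0$. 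I expect the main obstacle to be exactly this last step: it is not deep, but one has to be careful that the elementary-tensor substitution respects the $G$-grading of each variable and that the characteristic-zero reduction to multilinear identities is carried out correctly in the graded $T$-ideal setting. Steps 1 and 2 are essentially bookkeeping invocations of the structure theorems already proved in this section.
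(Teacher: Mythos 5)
Your argument is correct and follows essentially the same route as the paper: both pass to the algebraic closure $\kk$ of $Z(A)_{e}$ via \corref{kapalnsky} and the graded Wedderburn decomposition, and then observe that extending scalars over the central field $Z(A)_{e}$ does not change the ideal of $G$-graded $\ff$-identities. The only (harmless) divergence is that the paper obtains the twisted group algebra by applying \lemref{StructureOfGDivision} directly to $\kk\otimes_{Z(A)_{e}}\mathbf{D}$, whereas you re-apply \thmref{graded-wedderburn} to the extended algebra $B$ after checking $G$-simplicity with \propref{tensorGSimple} --- a slightly more careful path, since it sidesteps the question of whether $\kk\otimes_{Z(A)_{e}}\mathbf{D}$ remains a $G$-division algebra.
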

\begin{proof}
By \corref{kapalnsky}, for $\ll=Z(A)_{e}$, we get that $A$ is finite
dimensional over $\ll$ and $A=M_{\gg}(\mathbf{D})$ for a $G$-division
(finite dimensional) $\ll$-algebra $\mathbf{D}$. As a result, for
$\kk=\overline{\ll}$, $\kk\otimes_{\ll}A\backsimeq M_{\gg}(\kk\otimes_{\ll}\mathbf{D})$
(notice that $\ll$ being $e$-homogeneous implies that the above
isomorphism preserves the $G$-grading). Furthermore, by \lemref{StructureOfGDivision},
$\kk\otimes_{\ll}\mathbf{D}$ is of the form $\kk^{\alpha}H$, where
$H$ is a subgroup of $G$ and $\alpha\in Z^{2}(H,\kk^{\ast})$. Finally
the theorem follows since 
\[
id_{G,\ff}(A)=id_{G,\ff}(\kk\otimes_{\ll}A).
\]
\end{proof}
\begin{rem}
Notice that $\kk\otimes_{\ff}id_{G,\ff}(B)\subseteq id_{G,\kk}(B)$.
However, the other inclusion is not assured. For instance, let $\ff=\mathbb{Q}$,
$\kk=\mathbb{C}$, $G=C_{3}\times C_{3}=\left\langle \sigma\right\rangle \times\left\langle \tau\right\rangle $
and $B=\kk^{\alpha}G$, where $u_{\sigma}u_{\tau}=\xi u_{\tau}u_{\sigma}$
(here $\xi=\sqrt[3]{1}$) and $u_{\sigma}^{3}=u_{\tau}^{3}=1$, we
obtain that $x_{1.\sigma}x_{2,\tau}-\xi x_{2.\tau}x_{1,\sigma}\in id_{G,\kk}(B)$
but not in $\kk\otimes_{\ff}id_{G,\ff}(B)$.
\end{rem}
\begin{defn}
For a $G$-graded algebra $A$ we denote by $J_{G}(A)$ the intersection
of all of its $G$-primitive ideals. We call $J_{G}(A)$ \emph{the
$G$-graded Jacobson radical of $A$}. 

If $J_{G}(A)=\{0\}$, we say that $A$ is $G$-\emph{semisimple}.
\end{defn}
\begin{thm}[see \cite{Cohen1984}]
\label{thm:GSemisimpleAsSemisimple}If $G$ is a finite group and
$\ff$ is of zero characteristic, then $J_{G}(A)=J(A)$ for every
$G$-graded $\ff$-algebra $A$. In particular, $A$ is $G$-semisimple
if and only if $A$ is semisimple.
\end{thm}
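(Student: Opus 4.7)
The plan is to prove the two containments $J(A) \subseteq J_G(A)$ and $J_G(A) \subseteq J(A)$ separately. The easy direction uses no hypotheses beyond gradedness: $J(A)$ is a two-sided quasi-regular ideal, so for any maximal graded left ideal $M$ of $A$ one cannot have $J(A) + M = A$ (else $1 = j + m$ with $j \in J(A)$ quasi-regular and $m \in M$ would force $m = 1 - j$ to be left-invertible, contradicting the properness of $M$). Hence $J(A) \subseteq M$, and since $J(A)$ is two-sided, $J(A) \cdot A \subseteq J(A) \subseteq M$, giving $J(A) \subseteq (M : A) = \mbox{ann}_{A}(A/M)$. As every $G$-irreducible $A$-module is of the form $A/M$ for such $M$, intersecting yields $J(A) \subseteq J_G(A)$.

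For the reverse containment I would invoke the Cohen--Montgomery smash product machinery that is the subject of the cited paper. Set $B := A \# \ff[G]^{*}$. Modules over $B$ correspond to $G$-graded $A$-modules in such a way that irreducible $B$-modules are identified with $G$-irreducible graded $A$-modules and annihilators in $B$ restrict to annihilators in $A$. Consequently $J_G(A) = J(B) \cap A$. Now apply the Cohen--Montgomery duality $B \# \ff[G] \cong M_{|G|}(A)$. Combined with Morita invariance of the Jacobson radical and the fact that $\ff[G]$ is semisimple (Maschke's theorem, valid since $|G|$ is invertible in $\ff$), one deduces $J(B) = J(A) \# \ff[G]^{*}$; intersecting with $A$ then gives $J_G(A) \subseteq J(A)$.

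The main obstacle is the identification $J(B) = J(A) \# \ff[G]^{*}$. It rests on two ingredients: Maschke's theorem for $\ff[G]$, and the theorem of Bergman which asserts that $J(A)$ is itself $G$-graded whenever $G$ is finite (this lets one rewrite $M_{|G|}(J(A))$ as $(J(A) \# \ff[G]^{*}) \# \ff[G]$ and read off $J(B)$ from $J(B \# \ff[G])$). These are precisely the places where the hypotheses $G$ finite and $\mathrm{char}(\ff) = 0$ enter essentially, and their failure for infinite $G$ is what produces the counterexamples alluded to in Remark~\remref{OnlyFinite}.

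The ``in particular'' clause is then immediate: $A$ is $G$-semisimple if and only if $J_G(A) = 0$, which by the equality just established is equivalent to $J(A) = 0$, i.e., $A$ is semisimple.
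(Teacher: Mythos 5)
The paper offers no proof of this theorem at all: it is quoted with the attribution ``see \cite{Cohen1984}'', and what you have written is essentially a reconstruction of Cohen and Montgomery's own argument (the smash product $B=A\#\ff[G]^{*}$, the duality theorem $B\#\ff[G]\cong M_{|G|}(A)$, Maschke's theorem, and the gradedness of $J(A)$). So there is nothing in the paper to compare against except the citation; the route you chose is the right one, and your account of exactly where finiteness of $G$ and $\mathrm{char}(\ff)=0$ enter (Maschke for the identification $J(B)=J(A)\#\ff[G]^{*}$, the correspondence between $B$-modules and graded $A$-modules requiring $G$ finite) is accurate and matches the source of the counterexample in the paper's remark on infinite groups.

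There is, however, a genuine gap in your ``easy'' containment, and it undercuts the claim that $J(A)\subseteq J_{G}(A)$ ``uses no hypotheses beyond gradedness.'' From $J(A)+M\neq A$ you conclude $J(A)\subseteq M$ by maximality of $M$; but $M$ is maximal only among \emph{graded} left ideals, while $J(A)+M$ is a left ideal that need not be graded, so it can sit strictly between $M$ and $A$ without contradicting anything. To close this you need $J(A)$ to be a graded ideal (so that $J(A)+M$ is a proper graded left ideal containing $M$, hence equal to $M$) --- and that is precisely the Bergman/Cohen--Montgomery theorem you invoke only in the second half, valid because $G$ is finite. Equivalently, one can argue that for $G$ finite every graded-simple module is semisimple as an ungraded module and hence is annihilated by $J(A)$. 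Either patch uses the finiteness of $G$ in an essential way. With that repaired, and granting the standard facts about the smash product that you cite for the hard containment, the argument is correct.
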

\begin{lem}
\label{lem:GSemisimpleIdeals}Suppose $A$ is a $G$-semisimple PI
$\ff$-algebra and $I$ is a $G$-graded ideal of $A$. Then $I$
is also $G$-semsimple \textbf{algebra}. Furthermore, $Z(I)\subseteq Z(A)$. 
\end{lem}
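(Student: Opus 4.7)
The plan is to deduce both statements directly from the hypothesis $J_G(A)=0$; no use will be made of the PI assumption, which is inherited by $I$ for subsequent applications.

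For $G$-semisimplicity of $I$, I will show $J_G(I)\subseteq I\cap J_G(A)=0$. Given any $G$-primitive ideal $P$ of $A$, there are two cases: if $I\subseteq P$ then $J_G(I)\subseteq I\subseteq P$ trivially; if $I\not\subseteq P$, I claim that $I\cap P$ is $G$-primitive in $I$, so $J_G(I)\subseteq I\cap P\subseteq P$. To prove the claim, let $V$ be a graded faithful irreducible $A/P$-module. Then $(I+P)/P\cong I/(I\cap P)$ is a nonzero graded two-sided ideal of $A/P$; faithfulness combined with graded irreducibility of $V$ over $A/P$ forces $(I+P)/P\cdot V=V$. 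Faithfulness of the restricted $(I+P)/P$-action on $V$ is automatic, and graded irreducibility follows from the standard ideal trick: for any nonzero graded $(I+P)/P$-submodule $W\subseteq V$, graded $A/P$-irreducibility gives $A/P\cdot W=V$, whence
\[
V=(I+P)/P\cdot V=(I+P)/P\cdot(A/P\cdot W)\subseteq (I+P)/P\cdot W\subseteq W,
\]
so $W=V$. Intersecting over all $P$ yields $J_G(I)\subseteq J_G(A)=0$.

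For the inclusion $Z(I)\subseteq Z(A)$, I take $z\in Z(I)$ and $a\in A$ and set $b:=[z,a]$. Since $I$ is a two-sided ideal of $A$ and $z\in I$, we have $b\in I$. A short direct computation using $z\in Z(I)$ and $ai,ia\in I$ for $i\in I$ yields $bi=zai-azi=aiz-aiz=0$ and, symmetrically, $ib=0$. Hence $b\in I\cap\mathrm{Ann}_A(I)$. Next, $\mathrm{Ann}_A(I)$ is itself a graded ideal: writing $x=\sum_g x_g\in\mathrm{Ann}_A(I)$ and testing $xi=0$ against homogeneous elements $i$ of the graded ideal $I$ separates degrees and shows each $x_g$ annihilates $I$. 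Therefore $I\cap\mathrm{Ann}_A(I)$ is a graded ideal with square zero. Since every graded nilpotent ideal annihilates every graded irreducible $A$-module (iterate the dichotomy $NV=0$ or $V$), it is contained in every $G$-primitive ideal and hence in $J_G(A)=0$. Thus $b=0$, and $z\in Z(A)$.

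The main technical step is the verification that $I\cap P$ is $G$-primitive in $I$; concretely, that the restricted action of $I/(I\cap P)$ on $V$ remains graded irreducible, which is handled by the ideal-theoretic argument in the first paragraph above. Everything else is a short computation or a standard reduction.
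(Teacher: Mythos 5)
Your proof is correct, but it follows a genuinely different route from the paper's. The paper leans on the PI hypothesis: it writes $A$ as a subdirect product of its $G$-primitive quotients $A_i=A/M_i$, invokes the $G$-graded Kaplansky theorem (\corref{kapalnsky}) to upgrade each $A_i$ to a $G$-\emph{simple} algebra, so that the image of $I$ in each $A_i$ is either $0$ or all of $A_i$; then $I$ is itself a subdirect product of $G$-simple algebras (hence $G$-semisimple), and $Z(I)\subseteq Z(A)$ is read off coordinatewise since the image of a central element of $I$ is central in each factor. You instead never use PI: you transport two classical ungraded facts to the graded setting --- that a nonzero ideal of a primitive ring acts faithfully and irreducibly on the same module (so $I\cap P$ is $G$-primitive in $I$ whenever $I\not\subseteq P$), and that a square-zero graded ideal is killed by every graded irreducible module and hence lies in $J_G(A)$ --- deriving the center inclusion from the computation that $[z,a]$ lands in the graded ideal $I\cap\mathrm{Ann}_A(I)$, whose square is zero. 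The trade-off: your argument is more general (the lemma then holds for arbitrary $G$-semisimple algebras, no PI needed) and is self-contained modulo graded Schur-type generalities; the paper's argument is shorter given the machinery already in place and, more importantly, produces the explicit description of $A$ and $I$ as subdirect products of $G$-simple algebras, which is exactly the form reused later in the proof of \thmref{G-per-posner}. One small point to make explicit in your first paragraph: when you assert $A/P\cdot W=V$ for a nonzero graded $(I+P)/P$-submodule $W$, you should note that $A/P\cdot W\neq 0$ (the set of $v$ with $(A/P)v=0$ is a graded submodule, proper by faithfulness, hence zero); with that remark the dichotomy argument closes.
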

\begin{proof}
By assumption $A$ is a subdirect product of $G$-primitive algebras
$A_{i}=A/M_{i}$, where $M_{i}$ is a $G$-primitive ideal ($i\in\jj$
- an index set). That is, there is a $G$-graded embedding 
\[
\phi:A\to\prod_{i\in\jj}A_{i}
\]
such that $p_{i}\circ\phi$ is surjective for every $i\in\jj$, where
$p_{i}:\prod_{j\in\jj}A_{j}\to A_{i}$ is the natural projection.
By \corref{kapalnsky}, we get that each $A_{i}$ is, in fact, $G$-simple
algebra.

Since every $A_{i}$ is $G$-simple, for every $i\in\jj$ $p_{i}\circ\phi(I)=0$
or $A_{i}$. Denote by $\jj_{0}$ all the indexes $j\in\jj$ for which
the result is $A_{j}$. Hence, the sequance 
\[
0\to I\to\prod_{i\in\jj_{0}}A_{i}\to A_{i}\to0
\]
is exact. This shows that $I$ is a $G$-semisimple algebra.

If $a\in Z(I)$, then $p_{i}\circ\phi(a)$ is, in any case, inside
$Z(A_{i})$ (for every $i\in\jj$). As a result $a\in Z\left\{ \prod_{i\in\jj}A_{i}\right\} \cap A\subseteq Z(A)$.
\end{proof}
\selectlanguage{english}%

\section{\label{sec:Preliminaries}$G$-graded PI }

\selectlanguage{american}%
We will use from now on the standard notations of $G$-graded PI theory. 

\selectlanguage{english}%
A $G$-graded polynomial is an element of the non-commutative free
algebra $F\left\langle X_{G}\right\rangle $ generated by the variables
from $X_{G}=\{x_{i,g}|i\in\mathbb{N},\,g\in G\}$. It is clear that
$\ff\left\langle X_{G}\right\rangle $ is itself $G$-graded, where
$\ff\left\langle X_{G}\right\rangle _{g}$ is the span of all monomials
$x_{i_{1},g_{1}}\cdots x_{i_{n},g_{n}}$, where $g_{1}\cdots g_{n}=g$. 

We say that a $G$-graded polynomial $f(x_{i_{1},g_{1}},...,x_{i_{n},g_{n}})$
is a $G$-graded identity of a $G$-graded algebra $A$, if $f(a_{1},...,a_{n})=0$
for all $a_{1}\in A_{g_{1}},...,a_{n}\in A_{g_{n}}$. The set of all
such polynomials is denoted by $id_{G}(A)$ and it is evident that
it is an ideal of $\ff\left\langle X_{G}\right\rangle $. In fact,
it is a $G$-graded $T$-ideal, that is an ideal which is closed to
$G$-graded endomorphisms of $\ff\left\langle X_{G}\right\rangle $.

For $G$ finite. it is often convenient to view any ungraded polynomial
as $G$-graded via the embedding 
\[
\ff\left\langle X\right\rangle \to\ff\left\langle X_{G}\right\rangle ,
\]
where $x_{i}\mapsto\sum_{g\in G}x_{i,g}$. This identification respects
the $G$-graded identities, in the sense that if $f\in id(A)$ (the
$T$-ideal of ungraded identities of $A$), then $f\in id_{G}(A)$.

As in the ungraded case, the set of ($G$-graded) multilinear identities
of $A$ 
\[
\left\{ \sum_{\sigma\in S_{n}}\alpha_{\sigma}x_{i_{\sigma(1)},g_{\sigma(1)}}\cdots x_{i_{\sigma(n)},g_{\sigma(n)}}\in id_{G}(A)|\,\alpha_{\sigma}\in\ff;\,\,i_{1},...,i_{n}\mbox{ are all distinct}\right\} 
\]
$T$-generates $id_{G}(A)$. 

Suppose $f\in\ff\left\langle X_{G}\right\rangle $, $A$ is a $G$-graded
algebra and $S\subseteq A$ . We write $f(S)$ for the subset of $A$
consisting of all the graded evaluations of $f$ by elements of $S$.
Moreover, if $\mathcal{B}$ is a subset of $A$ consisting of homogenous
elements such that $\mbox{sp}\mathcal{B}=A$, then $\mbox{sp}f(\mathcal{B})=A$.
Thus, $f\in id_{G}(A)$ if and only if $f$ vanishes under graded
substitutions by elements from $\mathcal{B}$. 

\subsection{Central polynomials}

In this section we introduce the main tool for proving Posner's theorem
(see \secref{-graded-posner's-theorem}). 
\selectlanguage{american}%
\begin{defn}
Let $A$ be an $\ff$-algebra. We say that $f\in\fxx$ is a \emph{central
polynomial }of $A$ if $f(A)\subseteq Z(A)$ and $f$ is not a PI
of $A$.

If $A$ is also graded by a group $G$, then we say that $f\in\fxg$
is a\emph{ $G$-graded central polynomial} of $A$ if $f(A)\subseteq Z(A)$
and $f$ is not a $G$-graded PI of $A$. However, we will most of
the time use the phrase ``central polynomials'' also for $G$-graded
central polynomials.
\end{defn}
\selectlanguage{english}%
Suppose $\ff$ is a field of characteristic zero. Then, the following
polynomial (called the Regev polynomial)
\begin{eqnarray*}
F\left\langle X\right\rangle \ni L_{d}(x_{1},...,x_{d},y_{1},...,y_{d}) & = & \sum_{\sigma,\tau\in S_{d}}(-1)^{\sigma\tau}x_{\sigma(1)}y_{\tau(1)}x_{\sigma(2)}x_{\sigma(3)}x_{\sigma(4)}y_{\tau(2)}y_{\tau(3)}y_{\tau(4)}\cdots\\
 &  & \cdots x_{\sigma((n-1)^{2}+1)}\cdots x_{\sigma(d)}y_{\tau((n-1)^{2}+1)}\cdots y_{\tau(d)}
\end{eqnarray*}
is a central polynomial of $M_{n}(\ff)$, where $\exp(A)=d=n^{2}$.
That is, the values of $L_{d}$ when evaluated on $M_{n}(\ff)$ are
in the center of $M_{n}(\ff)$ and there are non-zero values (see
\cite{Formanek1987}).
\begin{rem}
A basic property of $L_{d}$ polynomial is that for $a_{1},...,a_{d},b_{1},...,b_{d}\in M_{n}(F)$
the value of $L_{d}(a_{1},...,a_{d},b_{1},...,b_{d})$ is non-zero
if and only if each one of the sets $\{a_{1},...,a_{d}\}$ and $\{b_{1},...,b_{d}\}$
is $F$-independent. 
\end{rem}
\begin{lem}
\label{lem:CentralAndPosner}Suppose $A$ is a semiprime $\ff$-algebra
(see \defref{semiprime}) of exponent $\exp(A)=d$, where $\ff$ is
a characteristic zero field. Then, $d=n^{2}$ is a square and $A$
is PI equivalent to $M_{n}(\ff)$. Furthermore, if $f$ is a central
polynomial of $A$ it is an identity of any semiprime algebra $A^{\prime}$
of lower exponent $\exp(A^{\prime})=m^{2}$.
\end{lem}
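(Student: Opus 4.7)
I would split the argument into a structural reduction and a concrete corner-embedding observation.

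\emph{Step 1: reduce $A$ to a matrix algebra up to PI-equivalence.} Since $A$ is semiprime, it embeds as a subdirect product into $\prod_i A_i$, where each $A_i = A/P_i$ is a prime PI $\ff$-algebra obtained by quotienting by a prime ideal $P_i$ of $A$. Ordinary Posner (Theorem \thmref{PosnerUngraded}) identifies each $A_i$ with an order in a central simple algebra $M_{n_i}(D_i)$, whose PI-degree I denote $N_i$ (so $N_i^2 = n_i^2[D_i:K_i]$ with $K_i$ the center). Because $\mathrm{char}\,\ff = 0$, multilinear identities generate $T$-ideals and are insensitive to scalar extension, giving $\mathrm{id}(A_i) = \mathrm{id}(M_{N_i}(\ff))$. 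The embedding into the product then yields
\[
\mathrm{id}(A) \;=\; \bigcap_i \mathrm{id}(A_i) \;=\; \bigcap_i \mathrm{id}(M_{N_i}(\ff)).
\]
The PI-degree bound on $A$ forces the $N_i$ to be uniformly bounded, so $n := \sup_i N_i$ is finite. Using the strict nesting $\mathrm{id}(M_k(\ff)) \supsetneq \mathrm{id}(M_{k+1}(\ff))$, the intersection collapses to $\mathrm{id}(M_n(\ff))$. Thus $A$ is PI-equivalent to $M_n(\ff)$, and since $\exp M_n(\ff) = n^2$, the hypothesis $\exp(A)=d$ forces $d = n^2$.

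\emph{Step 2: central polynomials vanish on strictly lower exponent.} Let $f$ be a central polynomial of $A$. The PI-equivalence $A \sim_{PI} M_n(\ff)$ forces $[f,x_{\text{new}}] \in \mathrm{id}(M_n(\ff))$ and $f \notin \mathrm{id}(M_n(\ff))$, so $f$ is a central polynomial of $M_n(\ff)$. Now let $A'$ be semiprime with $\exp(A') = m^2 < n^2$. Applying Step 1 to $A'$ yields $A' \sim_{PI} M_m(\ff)$, so it suffices to show $f \in \mathrm{id}(M_m(\ff))$. Consider the non-unital algebra embedding
\[
\iota : M_m(\ff) \hookrightarrow M_n(\ff)
\]
placing $M_m(\ff)$ as the top-left block. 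For any arguments $a_1,\dots,a_k \in M_m(\ff)$,
\[
f\bigl(\iota(a_1),\dots,\iota(a_k)\bigr) \;\in\; \iota\bigl(M_m(\ff)\bigr) \,\cap\, Z\bigl(M_n(\ff)\bigr) \;=\; \iota\bigl(M_m(\ff)\bigr) \,\cap\, \ff\cdot I_n.
\]
But any scalar multiple of $I_n$ sitting in the top-left $m\times m$ block must be zero, since its lower-right diagonal entries are forced to vanish and to equal the common scalar simultaneously. Therefore $f(a_1,\dots,a_k) = 0$, i.e. $f \in \mathrm{id}(M_m(\ff)) = \mathrm{id}(A')$.

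\emph{Where the real work lies.} Step 2 is essentially a one-line observation once the corner embedding is in view; the heart of the proof is Step 1, where the passage from the semiprime $A$ to a single matrix algebra $M_n(\ff)$ requires the subdirect decomposition into prime quotients, ordinary Posner, the characteristic-zero compatibility of multilinear identities with scalar extension, and the boundedness of the matrix sizes $N_i$ coming from the PI-degree of $A$. Everything else is routine bookkeeping.
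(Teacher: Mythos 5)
Your proof is correct and follows essentially the same route as the paper: the first assertion is exactly the standard consequence of ungraded Posner's theorem (which the paper simply cites from Giambruno--Zaicev, Theorem 1.12.1, and which you sketch via the subdirect decomposition into prime quotients), and your Step 2 is the paper's own non-unital corner embedding $M_m(\ff)\hookrightarrow M_n(\ff)$ argument, with the observation that a scalar matrix supported in a proper corner must vanish spelled out slightly more explicitly.
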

\begin{proof}
The first part is a consequence of the ungraded Posner's Theorem (see
Theorem 1.12.1 in \cite{Giambruno2005}). 

Furthermore, let $f$ be any central polynomial of $M_{n}(\ff)$.
Since for $m<n$ we may view $M_{m}(\ff)$ as a subalgebra of $M_{n}(\ff)$
via the embedding $A\mapsto\begin{pmatrix}A & 0 & \cdots & 0\\
0 & 0\\
\vdots &  & \ddots\\
0 &  &  & 0
\end{pmatrix}$ (notice that the in embedding does not preserve the unit element
of $M_{m}(\ff)$), so $f$ is an identity of $M_{m}(\ff)$.
\end{proof}
We turn to $G$-graded polynomials. 
\begin{defn}
For $f\in\fxg$ and $g\in G$, denote by $\rho_{g}(f)$ the $g$-component
of $f$. When $g=e$, we write $\rho$ for $\rho_{e}$.
\end{defn}
\selectlanguage{american}%
In order to deal with the infinite group case we will need a stronger
notion of ungraded central polynomials.
\begin{defn}
A polynomial $f=f(x_{1},...,x_{l})\in\fxx$ is called \emph{strong
central polynomial }for a $G$-graded $\ff$-algebra $A$ (or \emph{$G$-strong
central polynomial}), if:

\begin{enumerate}
\item $f$ is a central polynomial of $A$.
\item For every $g_{1},...,g_{l}\in G$ and $a_{g_{1}}\in A_{g_{1}},...,a_{g_{l}}\in A_{g_{l}}$
such that $f_{1}=f(x_{1,g_{1}},...,x_{l,g_{l}})$ is non-zero when
evaluated by $\overline{x}_{1,g_{1}}=a_{g_{1}},...,\overline{x}_{l,g_{l}}=a_{g_{l}}$,
then $\rho(f_{1})(a_{g_{1}},...,a_{g_{l}})\neq0$. 
\end{enumerate}
\end{defn}
\selectlanguage{english}%
The following Lemma shows that if $f$ is a strong central polynomial,
then we can construct from it \textbf{$e$-homogenous} $G$-graded
central polynomials.
\begin{lem}
\label{lem:CentralToGraded}Suppose $f$ is $G$-graded central polynomial
of a $G$-graded $\ff$-algebra $A$. Then, $\rho(f)$ is either central
or a $G$-graded identity of $A$.
\end{lem}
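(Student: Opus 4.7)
The plan is to reduce the lemma to a general observation about centers of $G$-graded algebras: the $e$-homogeneous component of any central element of $A$ is itself central. That is, if $z = \sum_{h \in G} z_h \in Z(A)$ is the homogeneous decomposition of $z$, then $z_e \in Z(A)_e$ — this holds even when $G$ is non-abelian and $Z(A)$ itself is not graded.

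I would prove this observation by unpacking the equality $bz = zb$ for an arbitrary homogeneous $b \in A_{g_0}$: the left-hand side has $b z_h \in A_{g_0 h}$ and the right-hand side has $z_h b \in A_{h g_0}$. Comparing $g_0$-homogeneous components on both sides forces $g_0 h = g_0$ on the left and $h g_0 = g_0$ on the right, both of which give $h = e$ by cancellation in $G$. Hence $b z_e = z_e b$. Since this holds for every homogeneous $b$, linearity yields $z_e \in Z(A)$, and by construction $z_e \in A_e$.

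With this in hand, the lemma is essentially immediate. Given any graded substitution $\overline{x}$, the hypothesis that $f$ is $G$-graded central gives $f(\overline{x}) \in Z(A)$, and since each variable $x_{i,g}$ is replaced by a homogeneous element of $A_g$, the decomposition $f(\overline{x}) = \sum_h \rho_h(f)(\overline{x})$ with $\rho_h(f)(\overline{x}) \in A_h$ is exactly the homogeneous decomposition of the central element $f(\overline{x})$. Applying the observation yields $\rho(f)(\overline{x}) = \rho_e(f)(\overline{x}) \in Z(A)_e$ for every graded substitution. So $\rho(f)$ takes only central (in fact, $e$-homogeneous central) values, and the stated dichotomy is the tautology: either $\rho(f)$ produces some non-zero value and is thus a $G$-graded central polynomial, or it vanishes identically on graded substitutions, in which case it lies in $id_G(A)$.

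I do not anticipate any real obstacle; the argument is a short bookkeeping exercise with graded components. The one conceptual point worth underlining is the asymmetry between $\rho_e$ and the other $\rho_h$: only the identity-degree component of a central element is guaranteed to be central, which is precisely why the lemma is phrased in terms of $\rho = \rho_e$ and why the analogous statement fails for non-identity degrees when $G$ is non-abelian.
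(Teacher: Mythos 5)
Your proof is correct and takes essentially the same route as the paper: both arguments extract the identity-degree graded component of the centrality relation, using the cancellation $gh=h \Leftrightarrow h=e$ (and $hg=h \Leftrightarrow h=e$) to isolate $\rho_e(f)$. The only cosmetic difference is that you work with the homogeneous decomposition of the central \emph{element} $f(\overline{x})$ for each evaluation, while the paper performs the same computation at the level of the free algebra by commutating with a fresh graded variable $y_h$.
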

\begin{proof}
Decompose $f$ into a sum of its $G$-graded components: $f=\sum_{g\in G}\rho_{g}(f)$.
Fix $h\in G$ and consider $y_{h}\in X_{G}$ which does not appear
in $f$. Since $f$ is central, it is clear that $[y_{h},f]\in id^{G}(A)$.
Hence, $\sum_{g\in G}[y_{h},\rho_{g}(f)]=0$. Since $[y_{h},\rho_{g}(f)]\in\fxg_{h}$
if and only if $g=e$ (notice that $[y_{h},\rho_{g}(f)]\in\fxg_{gh}\oplus\fxg_{hg}$),
we get $[y_{h},\rho_{e}(f)]\in id^{G}(A)$. Because this holds for
all $h\in G$, we conclude that $\rho(f)(A)$ lies inside the center
of $A$.
\end{proof}
\begin{cor}
\label{cor:usufull}Suppose $f=f(x_{1},...,x_{l})$ is a strong central
polynomial for the $G$-graded $\ff$-algebra $A$. Then, if $f_{1}=f(x_{1,g_{1}},...,x_{l,g_{l}})$
is not an identity of $A$ (here $g_{1},...,g_{l}\in G$), $\rho(f_{1})$
is a central polynomial of $A$.
\end{cor}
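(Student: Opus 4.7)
The plan is to combine the two properties of a strong central polynomial directly with Lemma \ref{lem:CentralToGraded}, and the proof will be quite short.

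First I would observe that $f_{1}$ is itself a $G$-graded central polynomial for $A$ in the sense required by Lemma \ref{lem:CentralToGraded}. Indeed, $f_{1}$ is obtained from the ungraded central polynomial $f$ by labelling each variable $x_{i}$ with a specific degree $g_{i}$; any $G$-graded evaluation $\bar{x}_{i,g_{i}}=a_{g_{i}}\in A_{g_{i}}$ of $f_{1}$ is in particular an ungraded evaluation of $f$ in $A$, and therefore lies in $Z(A)$ by property (1) of the strong central polynomial definition. So $f_{1}(A)\subseteq Z(A)$.

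Next I would invoke Lemma \ref{lem:CentralToGraded} applied to $f_{1}$. Since $f_{1}\in\fxg$ and $f_{1}(A)\subseteq Z(A)$, the lemma forces $\rho(f_{1})$ to be either a $G$-graded central polynomial of $A$ or else a $G$-graded identity of $A$. The entire task is thus reduced to ruling out the latter possibility.

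To rule it out I would use the hypothesis that $f_{1}$ is not a $G$-graded identity together with property (2) of the strong central polynomial definition. By hypothesis there exist homogeneous elements $a_{g_{1}}\in A_{g_{1}},\ldots,a_{g_{l}}\in A_{g_{l}}$ with $f_{1}(a_{g_{1}},\ldots,a_{g_{l}})\neq0$. Property (2) of Definition (strong central polynomial) says exactly that in this situation $\rho(f_{1})(a_{g_{1}},\ldots,a_{g_{l}})\neq0$, so $\rho(f_{1})\notin id_{G}(A)$. Combined with the previous paragraph this forces $\rho(f_{1})$ to be a $G$-graded central polynomial of $A$, completing the proof. There is no real obstacle here; the work has been front-loaded into the definition of ``strong'' and into Lemma \ref{lem:CentralToGraded}, and this corollary is essentially the bookkeeping that packages them together for later use in the Posner-type argument.
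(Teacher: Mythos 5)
Your proof is correct and is exactly the intended argument: the paper states this as an immediate corollary of Lemma \lemref{CentralToGraded} together with property (2) of the definition of a strong central polynomial, which is precisely how you have assembled it. Nothing is missing.
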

We will be interested in graded central polynomials for the $G$-graded
algebra $A=\ff^{\alpha}H\otimes M_{\gg}(\ff)$ (as in \defref{matrix-grading}).
We introduce two methods of constructing them. 
\selectlanguage{american}%

\subsubsection{Strong central polynomials using involution.}

Consider the following involution on the free algebra $\ff\left\langle X\right\rangle $
given by: $(\alpha x_{i_{1}}\cdots x_{i_{m}})^{t}=\alpha x_{i_{n}}\cdots x_{i_{1}}$.
We use this action to construct new (ungraded) central polynomials:
Suppose $f=f(x_{1},...,x_{m})\in\fxx$ is a central polynomial of
$M_{n}(\ff)$. Since the transpose action on matrices is an involution
on $M_{n}(\ff)$, we conclude that $f^{t}$ is also a central polynomial
of $M_{n}(\ff)$. Hence, $\check{f}=f(x_{1},...,x_{m})f(y_{1},...,y_{m})^{t}$
is also a central polynomial.
\begin{lem}
Consider the $G$-graded algebra $A=\ff^{\alpha}H\otimes M_{\mathfrak{g}}(\ff)$,
where $\ff=\mathbb{C}$ and the values $\alpha$ takes are roots of
unity. Let $f=L_{d}(x_{1},...,x_{d},y_{1},...,y_{d})\in\fxx\subseteq\fxg$
be the Regev polynomial, where $d=\exp(A)$. Then, for every $a_{1},...,a_{d}\in A$,
the $e$ part of 
\[
\check{f}(a_{1},...,a_{d},a_{1},...,a_{d};a_{1}^{\ast},...,a_{d}^{\ast},a_{1}^{\ast},...,a_{d}^{\ast})
\]
is not zero if and only if $f(a_{1},...,a_{d},a_{1},...,a_{d})\neq0$,
where $\left(cu_{h}\otimes M\right)^{\ast}=\bar{c}u_{h}^{-1}\otimes M^{\ast}$.
(Here $\bar{c}$ denotes the complex conjugate of $c\in\mathbb{C}$
and $M^{\ast}=\overline{M}^{t}$ for $M\in M_{m}(\mathbb{C})$.)
\end{lem}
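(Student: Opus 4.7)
The plan is to unwind $\check{f}$ at the prescribed inputs, recognize the result as $zz^{*}$ where $z:=f(a_{1},\ldots,a_{d},a_{1},\ldots,a_{d})$, then locate $z$ inside $Z(A)$ and read off its $e$-component.

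For the first step, I would check that $*$ is a $\mathbb{C}$-antilinear anti-involution on $A$: the identity $(xy)^{*}=y^{*}x^{*}$ reduces on the generators to $\overline{\alpha(h,k)}=\alpha(k^{-1},h^{-1})$, which follows from the cocycle relation combined with $|\alpha|=1$. Because $L_{d}$ has integer coefficients, one then has, for any inputs $b_{1},\ldots,b_{m}\in A$, the identity
\[
f^{t}(b_{1},\ldots,b_{m})=f(b_{1}^{*},\ldots,b_{m}^{*})^{*};
\]
the reversal of monomials performed by $t$ matches $*$'s reversal of products, while the integer coefficients are fixed by conjugation. Substituting $b_{i}=a_{i}^{*}$ and using $a_{i}^{**}=a_{i}$ turns the $f^{t}$-factor of $\check{f}$ into $z^{*}$, so the full evaluation of $\check{f}$ equals $zz^{*}$.

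Second, by \lemref{CentralAndPosner} the Regev polynomial $L_{d}$ is a central polynomial for $A$ (since $\exp A=d$), so $z\in Z(A)=Z(\ff^{\alpha}H)\otimes\ff\cdot I_{n}$. Write $z=c\otimes I_{n}$ with $c=\sum_{h\in H}c_{h}u_{h}\in Z(\ff^{\alpha}H)$; then $zz^{*}=cc^{*}\otimes I_{n}$. I would then extract the $e$-component in the $G$-grading of $M_{\gg}(\ff^{\alpha}H)$: the basis element $u_{h}\otimes e_{i,i}$ sits in degree $g_{i}^{-1}hg_{i}$, which equals $e$ exactly when $h=e$, so the $e$-part of $u_{h}\otimes I_{n}=\sum_{i}u_{h}\otimes e_{i,i}$ is $1_{A}$ when $h=e$ and zero otherwise. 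Consequently
\[
\rho\bigl(\check{f}(a_{1},\ldots,a_{d},a_{1},\ldots,a_{d};a_{1}^{*},\ldots,a_{d}^{*},a_{1}^{*},\ldots,a_{d}^{*})\bigr)=(cc^{*})_{e}\cdot 1_{A},
\]
where $(cc^{*})_{e}=\sum_{h\in H}|c_{h}|^{2}\alpha(h,h^{-1})$. The direction ``$e$-part non-zero $\Rightarrow z\neq 0$'' is then immediate, since $z=0$ forces $c=0$ and $(cc^{*})_{e}=0$.

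The main obstacle is the converse: showing the Hermitian form $c\mapsto(cc^{*})_{e}$ is anisotropic on $Z(\ff^{\alpha}H)$. Here I would exploit $\ff=\mathbb{C}$ together with the hypothesis that $\alpha$ takes values on the unit circle: by rescaling $u_{h}\mapsto\beta(h)u_{h}$ with $|\beta(h)|^{2}=\alpha(h,h^{-1})^{-1}$ (and compatible choices at $h$ and $h^{-1}$), one replaces $\alpha$ by a cohomologous cocycle $\alpha'$ with $\alpha'(h,h^{-1})=1$ for every $h\in H$. After this renormalization the form becomes the standard Hermitian inner product $\sum_{h}|c'_{h}|^{2}$, which is positive definite, and the conclusion follows. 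Threading the rescaling through the involution $*$ in a consistent way is the technical crux.
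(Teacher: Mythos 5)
Your route is the paper's route: verify that $*$ is an antilinear anti-automorphism (using $|\alpha|=1$), identify the evaluation of $\check{f}$ as $zz^{*}$ with $z=f(a_{1},\dots,a_{d},a_{1},\dots,a_{d})\in Z(A)$, and read off the $e$-component of $zz^{*}=cc^{*}\otimes I$. The problem is your final coefficient computation. Writing $c=\sum_{h}c_{h}u_{h}$, the definition of $*$ gives $c^{*}=\sum_{h}\bar{c}_{h}u_{h}^{-1}$ where $u_{h}^{-1}$ is the \emph{genuine multiplicative inverse} of $u_{h}$; hence $u_{h}u_{h}^{*}=u_{h}u_{h}^{-1}=1$ on the nose, while for $g\neq h$ the product $u_{g}u_{h}^{-1}$ is a scalar multiple of $u_{gh^{-1}}$ with $gh^{-1}\neq e$, which contributes nothing to the $e$-component of $cc^{*}\otimes I$. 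Consequently
\[
(cc^{*})_{e}=\sum_{h\in H}|c_{h}|^{2},
\]
with no cocycle factor: if you expand $u_{h}^{-1}=\overline{\alpha(h,h^{-1})}\,u_{h^{-1}}$ and then multiply, the $\alpha(h,h^{-1})$ you recorded is cancelled by the $\overline{\alpha(h,h^{-1})}$ coming from that expansion, since $|\alpha(h,h^{-1})|=1$. Your formula $\sum_{h}|c_{h}|^{2}\alpha(h,h^{-1})$ keeps only one of the two factors.

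Because of this slip you perceive an anisotropy problem that does not exist, and the rescaling you sketch to resolve it --- which you yourself flag as an unresolved ``technical crux'' --- is the one genuinely incomplete step of your write-up, as well as being impossible as stated ($|\beta(h)|^{2}$ is a positive real, whereas $\alpha(h,h^{-1})^{-1}$ is in general a nontrivial root of unity). The paper simply computes the $u_{e}$-coefficient of $\bigl(\sum_{g}\alpha_{g}u_{g}\bigr)\bigl(\sum_{g}\bar{\alpha}_{g}u_{g}^{-1}\bigr)$ to be $\sum_{g}|\alpha_{g}|^{2}>0$ and is done. Once you correct the coefficient, your argument closes in exactly the same way; everything before that point (the anti-involution check, the identity $f^{t}(b_{1},\dots,b_{m})=f(b_{1}^{*},\dots,b_{m}^{*})^{*}$, and the observation that only $h=e$ contributes to the $e$-part of $u_{h}\otimes I$) is correct and matches the paper.
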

\begin{proof}
First, we claim that $()^{\ast}$ is an anti-automorphism of $A$.
Indeed, 
\[
\left(cu_{h}\otimes M\cdot c^{\prime}u_{h^{\prime}}\otimes M^{\prime}\right)^{\ast}=\overline{cc^{\prime}}\overline{\alpha(h,h^{\prime})}u_{hh^{\prime}}^{-1}\otimes M^{\prime\ast}M^{\ast}
\]
 whereas 
\[
\left(cu_{h}\otimes M\right)^{\ast}\left(c^{\prime}u_{h^{\prime}}\otimes M^{\prime}\right)^{\ast}=\overline{cc^{\prime}}u_{h}^{-1}u_{h^{\prime}}^{-1}\otimes M^{\prime\ast}M^{\ast}.
\]
Finally, 
\[
u_{h}^{-1}u_{h^{\prime}}^{-1}=(\alpha(h,h^{\prime})u_{hh^{\prime}})^{-1}=\overline{\alpha(h,h^{\prime})}u_{hh^{\prime}}^{-1}.
\]
The last equality follows from $|\alpha(h,h^{\prime})|=1$.

Now we are ready to prove the Lemma. If $f(a_{1},...,a_{d},a_{1},...,a_{d})=0$,
it is obvious that also 
\[
\check{f}(a_{1},...,a_{d},a_{1},...,a_{d};a_{1}^{\ast},...,a_{d}^{\ast},a_{1}^{\ast},...,a_{d}^{\ast})=0,
\]
in particular its $e$ component must be zero.

If, on the other hand, $f(a_{1},...,a_{d},a_{1},...,a_{d})$ is non-zero,
since $\check{f}$ is a central polynomial of $A$, it must be equal
to 
\[
\sum_{g\in H}\alpha_{g}u_{g}\otimes I\in A,
\]
where at least one of the $\alpha_{i}\neq0$. Since $()^{\ast}$ is
an anti-automorphism,
\[
\check{f}(a_{1},...,a_{d},a_{1},...,a_{d};a_{1}^{\ast},...,a_{d}^{\ast},a_{1}^{\ast},...,a_{d}^{\ast})=\left(\sum_{g}\alpha_{g}u_{g}\right)\left(\sum_{g}\bar{\alpha}_{g}u_{g^{-1}}\right)\otimes I.
\]
The $e$ part of the last expression is equal to $\sum_{g\in H}|\alpha_{g}|^{2}>0$. 
\end{proof}
\begin{thm}
\label{thm:PassToC}Suppose that $\ff$ is any field of characteristic
zero, $G$ is a finite group and $A=\ff^{\alpha}H\otimes M_{\mathfrak{g}}(\ff)$
is $G$-graded as in \defref{matrix-grading}. Suppose further that
$f$ is a central polynomial of $A$. Then, $\check{f}$ is strong
central polynomial of $A$.
\end{thm}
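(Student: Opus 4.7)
The plan is to reduce to the special case $\ff = \mathbb{C}$ with $|\alpha(h,h')| = 1$ already treated in the preceding lemma, and then exploit the anti-automorphism $(\cdot)^*$ introduced there. The reduction has three steps. First, both conditions in the definition of a strong central polynomial transfer under scalar extension $A \rightsquigarrow A \otimes_{\ff} \mathbb{L}$, so we may enlarge $\ff$ freely and pass to its algebraic closure. Second, over $\bar\ff$ the quotient $\bar\ff^*/\mu_\infty(\bar\ff)$ is uniquely divisible and hence cohomologically trivial for the finite group $H$, so the inclusion $\mu_\infty \hookrightarrow \bar\ff^*$ induces an isomorphism $H^2(H,\mu_\infty) \cong H^2(H,\bar\ff^*)$; this lets us replace $\alpha$ by a cohomologous cocycle valued in roots of unity without altering the $G$-graded isomorphism class of $A$. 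Third, the finitely many relevant algebraic constants (values of $\alpha$, coefficients of $f$, coordinates of any fixed homogeneous evaluation) lie in a countable algebraically closed subfield of $\bar\ff$, which embeds into $\mathbb{C}$, placing us in the setting of the preceding lemma.

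In this reduced setting, the centrality of $\check{f}$ is formal: the identity $f^t(a_1,\ldots,a_m) = (f(a_1^*,\ldots,a_m^*))^*$ together with the $*$-stability of $Z(A)$ shows that $f^t$ is central whenever $f$ is, whence $\check{f} = f \cdot f^t$ takes values in $Z(A)$, establishing the first clause of strong-centrality. For the second clause, fix a graded assignment of the variables of $\check f$ and a homogeneous evaluation $(a,b)$ with $\check{f}(a,b) \neq 0$. Since $Z(A) = Z(\mathbb{C}^{\alpha} H) \otimes I$ sits inside $\mathbb{C}^{\alpha} H \otimes I$, we may write $f(a) = \sum_h c_h u_h \otimes I$ and $f^t(b) = \sum_h d_h u_h \otimes I$; using that the $G$-grading on $A$ satisfies $(u_g \otimes I)_e = \delta_{g,e}(u_e \otimes I)$, a direct expansion gives $(\check{f}(a,b))_e = \bigl(\sum_h c_h d_{h^{-1}}\alpha(h,h^{-1})\bigr)\cdot I$.

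The main obstacle is ruling out accidental cancellation in this sum when $\check{f}(a,b) \neq 0$. The intended route is the positivity trick of the preceding lemma: on the diagonal specialisation $b = a^*$, the relation $d_h = \overline{c_{h^{-1}}}\,\alpha(h^{-1},h)^{-1}$ collapses the $e$-component to $\sum_h |c_h|^2 \cdot I$, which is strictly positive whenever any $c_h$ is non-zero, equivalently whenever $f(a) \neq 0$. Propagating this from the diagonal to an arbitrary homogeneous evaluation is the technical core of the proof; the plan is to combine \lemref{CentralToGraded} --- which shows that once $\check{f}_1$ is graded central, $\rho(\check{f}_1)$ is either central or a graded identity --- with a Zariski-density comparison between the polynomial functions $\check{f}(a,b)$ and $(\check{f}(a,b))_e$ on the relevant homogeneous components of $A$, using the diagonal non-vanishing to rule out the identity alternative globally.
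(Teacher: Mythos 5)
Your reduction to $\ff=\mathbb{C}$ with a cocycle taking root-of-unity values is essentially the paper's (the paper goes up to an algebraically closed field containing $\mathbb{C}$ and writes $A=\ff\otimes_{\mathbb{C}}A_{\mathbb{C}}$ rather than descending to a countable subfield, but the content is the same), and your formal derivation of the centrality of $\check{f}$ and of the formula for the $e$-component agree with what the paper's preceding lemma does. The problem is the last step, which you explicitly leave as a ``plan'': propagating non-vanishing of the $e$-part from the diagonal substitution $b=a^{\ast}$ to an arbitrary homogeneous evaluation. This is a genuine gap, and it cannot be closed, because the cancellation you are worried about actually occurs. Take $H=G=C_{2}=\{e,\sigma\}$ with trivial $\alpha$ and $\gg=(e,\dots,e)$, so that $A=\mathbb{C}C_{2}\otimes M_{n}(\mathbb{C})$ with $A_{e}=u_{e}\otimes M_{n}(\mathbb{C})$ and $A_{\sigma}=u_{\sigma}\otimes M_{n}(\mathbb{C})$, and let $f=L_{n^{2}}$. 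Substituting $x_{1}\mapsto u_{\sigma}\otimes e_{11}$ and every other variable of $\check{f}$ by $u_{e}\otimes(\text{a matrix unit})$ so that each batch is a basis of $M_{n}(\mathbb{C})$ in each simple component, one computes (already for $n=1$, where $\check f=x_{1}y_{1}y_{2}x_{2}$ and the value at $(u_{\sigma},u_{e},u_{e},u_{e})$ is $u_{\sigma}$) that $\check{f}$ takes the non-zero value $c\,u_{\sigma}\otimes I$, which is homogeneous of degree $\sigma$ and hence has zero $e$-part. So condition (2) of the definition of a strong central polynomial, which quantifies over \emph{all} homogeneous evaluations, fails. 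Neither of your two proposed tools can repair this: \lemref{CentralToGraded} only yields a dichotomy at the level of the polynomial $\rho(\check f_{1})$ (graded identity versus central non-identity), not the pointwise implication ``$\check f_{1}(a,b)\neq0\Rightarrow\rho(\check f_{1})(a,b)\neq0$''; and Zariski density can never prove a pointwise statement of this kind --- it can only show that the closed set $\{\rho(\check f_{1})=0\}$ does not contain the open set $\{\check f_{1}\neq0\}$, whereas here their intersection is genuinely non-empty. (The diagonal $b=a^{\ast}$ is moreover not Zariski-constructible over $\mathbb{C}$, since $\ast$ involves complex conjugation.)

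You should be aware that the paper's own proof is no more complete at exactly this point: it reduces to $A_{\mathbb{C}}$ and then simply asserts that the preceding lemma makes $\check f$ strong central, although that lemma only treats the substitution $(a_{1},\dots,a_{d},a_{1},\dots,a_{d};a_{1}^{\ast},\dots,a_{d}^{\ast},a_{1}^{\ast},\dots,a_{d}^{\ast})$. What the diagonal computation actually delivers --- and what is all that is needed downstream --- is the existential statement: for a suitable choice of degrees $g_{1},\dots,g_{l}$ there is a homogeneous evaluation of $\check f_{1}$ with non-zero $e$-part, whence by \lemref{CentralToGraded} the polynomial $\rho(\check f_{1})$ is a degree-$e$ central polynomial of $A$. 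Your argument, stripped of the density step, proves exactly that existential version; the universal version stated in the theorem does not hold, so rather than trying to complete the propagation you should reformulate the conclusion.
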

\begin{proof}
By extending scalars, we may assume that $\ff$ is an algebraically
closed field containing $\mathbb{C}$. Hence, since every $\alpha\in Z^{2}(H,\mathbb{\ff}^{\ast})$
is cohomologous to $\alpha^{\prime}$ (over the algebraically closed
field $\ff$) which satisfies the assumption in the previous theorem.
Moreover, by Lemma 1.3 in \cite{Aljadeff2014}, the $G$-graded algebras
$\mathbb{\ff}^{\alpha}H\otimes M_{\gg}(\mathbb{\ff})$ and $\ff^{\alpha^{\prime}}H\otimes M_{\gg}(\ff)$
are $G$-graded isomorphic.

As a result $A_{\mathbb{C}}=\mathbb{C}^{\alpha}H\otimes M_{\mathfrak{g}}(\mathbb{C})$
is well defined and $\ff\otimes_{\mathbb{C}}A_{\mathbb{C}}$. By the
previous Lemma, $\check{f}$ is a strong central polynomial of $A_{\mathbb{C}}$,
hence also if $A$.
\end{proof}

\subsubsection{Strong central polynomials using projective representations of groups.}

The following method is due to Eli Aljadeff.

\selectlanguage{english}%
We start with a preliminary Lemma:
\selectlanguage{american}%
\begin{lem}
\label{lem:lifting}Suppose $A$ and $B$ are semisimple and finite
dimensional $\ff$-algebras. If $\phi:A\to B$ is an epimorphism,
then every central minimal idempotent in $B$ can be lifted in $A$.
\end{lem}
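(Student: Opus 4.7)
The plan is to reduce to the Wedderburn decomposition of both algebras and track what the epimorphism does to the simple components.

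First I would invoke Wedderburn's theorem to write $A = \bigoplus_{j \in J} A_j$ as an internal direct sum of simple two-sided ideals $A_j$, each with its own central idempotent $1_{A_j}$. Then the key structural observation is that any two-sided ideal of a semisimple finite-dimensional algebra is a direct summand made up of some of these simple factors; in particular, $\ker \phi$ is of the form $\bigoplus_{j \in J_0} A_j$ for some subset $J_0 \subseteq J$. Consequently, $\phi$ identifies $B$ with $\bigoplus_{j \in J \setminus J_0} A_j$, and the restriction of $\phi$ to this complementary summand is a $G$-graded isomorphism onto $B$.

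With this identification, the central minimal idempotents of $B$ are exactly the images $\phi(1_{A_j})$ for $j \in J \setminus J_0$, since these are precisely the identities of the simple components in the Wedderburn decomposition of $B$. So given a central minimal idempotent $e \in B$, pick the unique $j_0 \in J \setminus J_0$ with $\phi(1_{A_{j_0}}) = e$ and set $\tilde{e} = 1_{A_{j_0}} \in A$. This element is central in $A$ (it is the identity of an ideal which is itself a simple summand), it is idempotent, and it is minimal among central idempotents of $A$ because $A_{j_0}$ admits no nontrivial decomposition as a product of ideals.

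The only step that requires a small argument is the claim that ideals of a semisimple finite-dimensional algebra are sums of simple components; but this is classical and follows directly from the fact that any such ideal $I$ satisfies $I = I \cdot 1_A = \bigoplus_j I \cdot 1_{A_j}$, and each $I \cdot 1_{A_j}$ is an ideal of the simple algebra $A_j$, hence is $0$ or $A_j$. I do not expect any real obstacle here: the result is essentially bookkeeping inside the Wedderburn decomposition, and no subtlety arises from the $G$-grading since lifting central minimal idempotents is a purely algebra-theoretic statement about $\phi$.
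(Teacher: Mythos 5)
Your proof is correct and follows essentially the same route as the paper: decompose $A$ via Wedderburn into simple two-sided ideals, observe that $\ker\phi$ is a sum of some of these simple components, and note that $\phi$ restricted to the complementary summand is an isomorphism matching simple components of $A$ with those of $B$, so each central minimal idempotent of $B$ lifts to the identity of the corresponding simple summand of $A$. (The phrase ``$G$-graded isomorphism'' in your second paragraph is a stray --- no grading enters this lemma, as you yourself note at the end --- but this does not affect the argument.)
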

\begin{proof}
The kernel of $\phi$, denoted by $I$, must be of the form (as any
ideal of $A$) $A_{i_{1}}\times\cdots\times A_{i_{s}}$. Without loss
of generality assume that $I=A_{q^{\prime}+1}\times\cdots\times A_{r}$.
Thus, the restriction $\phi|_{A_{1}\times\cdots\times A_{q^{\prime}}}$
is an isomorphism. From Corollary 7.50 in ???, it follows that there
is $q=q^{\prime}$ and there is $\sigma\in S_{q}$ such that $\phi(A_{i})=B_{\sigma(i)}$.
The Lemma is now clear.
\end{proof}
\begin{lem}
Assume $\ff$ is a characteristic zero algebraically closed field
and $0\neq\rho=\sum_{g\in G}\mu_{g}u_{g}\in\ff^{\alpha}G$ is a central
idempotent. Then $\mu_{e}\neq0$. 
\end{lem}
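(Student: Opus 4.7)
The plan is to show that the $e$-coefficient $\mu_e$ coincides with $\mbox{Tr}(\lambda_\rho)/|G|$, where $\lambda$ is the left regular representation of $\ff^\alpha G$, from which non-vanishing is automatic for any nonzero idempotent $\rho$. Throughout I assume $G$ is finite (which is what is needed in the paper, since the lemma will be applied to the twisted group algebra $\ff^\alpha H$ of the $G$-division algebra in \corref{kapalnsky}, whose support $H$ is necessarily finite). By replacing $\alpha$ by a cohomologous normalized representative, which yields a $G$-graded isomorphic algebra and preserves the $e$-coefficient map, I may further assume $\alpha(e,g)=\alpha(g,e)=1$ for every $g\in G$.

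Define $\tau : \ff^\alpha G \to \ff$ by $\tau(\sum_g \mu_g u_g) = \mu_e$. On basis elements, $\tau(u_g u_h)$ equals $\alpha(g,h)$ when $h=g^{-1}$ and vanishes otherwise, so verifying $\tau(u_g u_h) = \tau(u_h u_g)$ reduces to the identity $\alpha(g,g^{-1}) = \alpha(g^{-1},g)$. This is the 2-cocycle relation evaluated at the triple $(g, g^{-1}, g)$ combined with normalization, so $\tau$ is a trace. Next, let $\lambda : \ff^\alpha G \to \endo_\ff(\ff^\alpha G)$ be left multiplication. In the basis $\{u_h\}_{h\in G}$, the operator $\lambda_{u_g}$ sends $u_h$ to $\alpha(g,h) u_{gh}$; for $g \neq e$ this has zero diagonal since $h \mapsto gh$ is fixed-point free, while $\lambda_{u_e}$ is the identity operator of trace $|G|$. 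Extending linearly, $\mbox{Tr}(\lambda_x) = |G| \cdot \tau(x)$ for every $x\in \ff^\alpha G$.

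Finally, $\lambda$ is faithful because $\lambda_\rho(u_e) = \rho \cdot u_e = \rho \neq 0$; hence $\lambda_\rho$ is a nonzero idempotent of $\endo_\ff(\ff^\alpha G)$, i.e.\ a projection onto its image, whose trace is a positive integer $r$ equal to the rank of that projection. Therefore $\mu_e = \tau(\rho) = r/|G| \neq 0$. The only step requiring care is the trace property of $\tau$, which boils down to the cocycle identity forcing $\alpha(g,g^{-1}) = \alpha(g^{-1},g)$; everything else is a direct matrix computation in the regular representation.
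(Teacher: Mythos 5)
Your proof is correct, and it takes a genuinely different and substantially more elementary route than the paper. The paper invokes a theorem of Schur to realize $\ff^{\alpha}G$ as a quotient of an honest group algebra $\ff\Gamma$ of a finite central extension, lifts the central idempotent through that epimorphism (using the lemma on lifting central minimal idempotents in semisimple algebras), writes the lift via the character formula $\rho'=\frac{\chi(e)}{|\Gamma|}\sum_{\gamma}\overline{\chi(\gamma)}u_{\gamma}$, and pushes down to compute $\mu_{e}$ explicitly as a nonzero scalar. You instead observe that, after normalizing $\alpha$ (legitimate, since passing to a cohomologous cocycle rescales each $\mu_g$ by a nonzero scalar and so preserves the vanishing or non-vanishing of $\mu_e$), the coefficient functional $\tau(x)=\mu_{e}$ satisfies $\mbox{Tr}(\lambda_{x})=|G|\cdot\tau(x)$ for the left regular representation $\lambda$, because $h\mapsto gh$ is fixed-point-free for $g\neq e$; since $\lambda$ is faithful, $\lambda_{\rho}$ is a nonzero idempotent operator and $\mu_{e}=\mbox{rank}(\lambda_{\rho})/|G|\neq0$ in characteristic zero. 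Your argument proves strictly more with strictly less: it never uses centrality of $\rho$, never uses algebraic closure of $\ff$, and avoids Schur's covering-group theorem, character theory, and idempotent lifting altogether. What the paper's route buys in exchange is an explicit formula for $\mu_{e}$ in terms of $\chi(e)$ and $|\Gamma|$, but that extra information is not used anywhere else in the paper, so your proof could replace it without loss. (Your aside that $\tau$ is a trace form, via $\alpha(g,g^{-1})=\alpha(g^{-1},g)$, is correct but not actually needed for the argument.) The one hypothesis you add, finiteness of $G$, is implicit in the paper's proof as well and is satisfied in every application of the lemma.
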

\begin{proof}
By a theorem of Schur (see Theorem 11.17 in ???) we know that there
exist a finite central extension 
\[
\xymatrix{0\ar[r] & C\ar[r] & \Gamma\ar[r]^{\pi} & G\ar[r] & 0}
\]
and $\lambda\in\hom(C,\ff^{\ast})$ such that $[\lambda\circ\beta]=[\alpha]\in H^{2}(G,\ff^{\ast})$,
where $[\beta]\in H^{2}(G,C)$ corresponds to the above extension.
Hence, by choosing a transversal $\pi(\gamma_{1})=g_{1},...,\pi(\gamma_{m})=g_{m}$
(here $G=\{e=g_{1},g_{2},...,g_{m}\}$), we get an epimorphism 
\[
\phi:\ff\Gamma\to\ff^{\alpha}G
\]
given by $\phi(u_{c\gamma_{i}})=\lambda(c)u_{g_{i}}$. 

Since both algebras are finite dimensional and semisimple, by \lemref{lifting},
there is a central idempotent $\rho^{\prime}\in\ff\Gamma$ which projects
(via $\pi$) onto $\rho$. Thus, there exists a character $\chi$
of $\Gamma$ such that: 
\[
\rho^{\prime}=\frac{\chi(e)}{|\Gamma|}\sum_{\gamma\in\Gamma}\overline{\chi(\gamma)}u_{\gamma}.
\]
Since $C\leq Z(\Gamma)$ and $\chi$ is a character of an irreducible
representation $V$ of $\Gamma$ we know, by Schur's Lemma, that $c$
acts on $V$ by multiplication by $\frac{1}{\dim V}\chi(c)$. Hence,
$\chi(c\gamma)=\chi(c)\chi(\gamma)$. This all boils down to: 
\[
\rho=\pi(\rho^{\prime})=\left(\sum_{c\in C}\overline{\chi(c)}\lambda(c)\right)\cdot\frac{\chi(e)}{|\Gamma|}\sum_{i=1}^{m}\overline{\chi(\gamma_{i})}u_{g_{i}}.
\]
Since $e\neq0$, $\chi(e)\sum_{c\in C}\overline{\chi(c)}\lambda(c)\neq0$.
Hence also $\mu_{e}=\mu_{g_{1}}=\left(\sum_{c\in C}\overline{\chi(c)}\lambda(c)\right)\cdot\frac{|\chi(e)|}{|\Gamma|}\neq0$.
\end{proof}
\begin{thm}
\label{thm:Central!}Let \foreignlanguage{english}{$A=\ff^{\alpha}H\otimes M_{\gg}(\ff)$},
where $\ff$ is a field of zero characteristic. Suppose $F=F(x_{1},...,x_{l})$
is a central polynomial of $A$. Then, $F$ is a strong central polynomial
of $A$.
\end{thm}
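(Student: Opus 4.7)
My plan proceeds in three steps.

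First, I would reduce to the case $\ff$ algebraically closed. The strong-central property descends from $\bar\ff\otimes_\ff A$ to $A$, because a homogeneous evaluation $\bar a$ with $F(\bar a)\neq 0$ in $A$ remains non-zero in $\bar\ff\otimes A$, and the $G$-graded $e$-component is non-vanishing in $A$ iff non-vanishing in $\bar\ff\otimes A$ (since the grading is compatible with scalar extension). Under this reduction, $Z(A)=Z(\ff^\alpha H)\otimes 1_n$ is a commutative semisimple algebra with minimal central idempotents $e_1,\dots,e_r$. Using the grading recipe from \defref{matrix-grading} --- namely that $u_h\otimes e_{i,i}$ has degree $g_i^{-1}hg_i$, which equals $e$ iff $h=e$ --- a direct computation shows that for $z=\sum_h c_h u_h\otimes I_n\in Z(A)$, the $G$-graded $e$-component equals $z_e=c_e\cdot 1_A$. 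So the theorem reduces to: if $z=F(\bar a)\neq 0$ for a homogeneous $\bar a$, then the $u_e$-coefficient $c_e$ of $z$ in $\ff^\alpha H$ is non-zero.

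Next, write $z=\sum_i c_i e_i$. The preceding lemma ensures $(e_i)_e\neq 0$ for every $i$, so the remaining task is to show the linear combination $c_e=\sum_i c_i (e_i)_e$ does not cancel. The strategy is to lift to the Schur-type central extension $\pi\colon\Gamma\twoheadrightarrow H$ from the proof of the preceding lemma (with $\lambda\colon C\to\ff^\times$ realizing $[\alpha]$). Form $\tilde A=\ff\Gamma\otimes M_\gg(\ff)$ with the $G$-grading induced by $\pi$ and $\gg$; the surjection $\tilde A\twoheadrightarrow A$ is then a $G$-graded epimorphism, so we may lift $\bar a$ to a homogeneous $\tilde{\bar a}\in\tilde A$ and set $\tilde z=F(\tilde{\bar a})$, which maps to $z$.

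The advantage of $\tilde A$ is that $\ff\Gamma$ is an \emph{honest} group algebra, with explicit central idempotents $\tilde e_j=\frac{\chi_j(e)}{|\Gamma|}\sum_\gamma\overline{\chi_j(\gamma)}u_\gamma$ whose $u_e$-coefficient equals the strictly positive rational $\chi_j(e)^2/|\Gamma|$. I would analyze $\tilde z$ through its projection onto each simple block of $\ff\Gamma\otimes M_\gg(\ff)$, relate these projections to the $c_i$'s via the formula $\pi(\tilde e_j)=(\sum_{c\in C}\overline{\chi_j(c)}\lambda(c))\cdot(\text{central idempotent of }\ff^\alpha H)$ arising in the preceding lemma, and use the strict positivity of the $u_e$-weights together with character orthogonality on $\Gamma$ to conclude $c_e\neq 0$ whenever some $c_i\neq 0$.

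The hardest step is that $\tilde z=F(\tilde{\bar a})$ need not itself be central in $\tilde A$ (since $F$ is central only for $A$, not $\tilde A$), so we must work with a non-central lift whose \emph{projection} is central. The technical heart of the argument is showing that the $u_e$-coefficient of $\tilde z$ in $\ff\Gamma$ matches $c_e$ under $\pi$ (up to a non-zero scalar in $\lambda(C)$), and then invoking the positivity of the Schur-type formula for $(\tilde e_j)_e$ to preclude cancellation. I expect the delicate part to be matching the blockwise decomposition of $\tilde z$ in $\tilde A$ with the blockwise decomposition $z=\sum_i c_i e_i$ on the $A$-side, which requires careful bookkeeping of how $C$-isotypic components of projective characters of $H$ lift through $\Gamma$.
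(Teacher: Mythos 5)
Your Step 1 is correct and matches what the paper does implicitly: for a central $z=\sum_h c_h u_h\otimes I$ the degree-$e$ component is $c_e\cdot 1_A$, so everything reduces to showing that a non-zero value $z=F(\bar a)$ has non-zero $u_e$-coefficient. The gap is in Steps 2--3. Once you write $z=\sum_i c_i e_i$ over the minimal central idempotents of $\ff^{\alpha}H$, strict positivity of the weights $(e_i)_e$ cannot by itself preclude cancellation in $c_e=\sum_i c_i (e_i)_e$: the scalars $c_i$ are whatever the evaluation produces, and neither character orthogonality on $\Gamma$ nor bookkeeping of $C$-isotypic components controls their signs. Concretely, take $H=G=C_2$, $\alpha$ trivial, $\gg=(e)$, so $A=\ff C_2$; then $F=x_1$ is a central polynomial of $A$, and the homogeneous evaluation $x_1\mapsto u_\sigma$ gives $z=u_\sigma=\tfrac12(u_e+u_\sigma)-\tfrac12(u_e-u_\sigma)\neq0$ with $c_e=0$. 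So some input beyond positivity is indispensable. The input the paper actually uses, and which your plan omits entirely, is \lemref{CentralAndPosner}: as an ungraded algebra $A\cong\prod_i M_{k_i|\gg|}(\ff)$ with $k_1\geq\cdots\geq k_l$, so $F$ is a central polynomial of the largest block $M_{k_1|\gg|}(\ff)$ and hence an \emph{identity} of every strictly smaller block; therefore $z$ is supported only on the maximal block(s), the paper gets $F\bigl(M_{k_1}(\ff)\otimes M_{|\gg|}(\ff)\bigr)=\ff\rho\otimes I$ for the single central idempotent $\rho=1_{M_{k_1}(\ff)}$ of $\ff^{\alpha}H$, and only then applies the idempotent lemma --- to $\rho$ alone, never to a linear combination. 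Your detour through $\ff\Gamma\otimes M_{\gg}(\ff)$ is not needed for any of this (the Schur cover appears only inside the idempotent lemma, to compute the $u_e$-coefficient of one central idempotent), and as you yourself observe it creates the additional problem that $F(\tilde{\bar a})$ is not central upstairs, which your sketch does not resolve.

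To be fair, your unease about matching blockwise decompositions points at a real subtlety that the paper's own write-up also passes over: if $\ff^{\alpha}H$ has more than one simple component of maximal dimension $k_1$ (as in the $\ff C_2$ example), then even after discarding the smaller blocks the weights $(e_i)_e$ of the maximal blocks are all equal, so $c_e$ is proportional to $\sum_i c_i$ and the cancellation problem returns; the identity $W=\ff\rho\otimes I$ tacitly treats the maximal block as unique. But the cure is not the lift to $\ff\Gamma$; it is either an argument reducing to a single maximal block, or the involution construction of \thmref{PassToC}, where positivity genuinely enters through the expression $\sum_g|\alpha_g|^2>0$. As written, your proof cannot be completed.
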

\begin{proof}
Since $\ff^{\alpha}H$ is semisimple (since $\ff$ is of zero characteristic)
and finite dimensional, we can write it as the product $M_{k_{1}}(\ff)\times\cdots\times M_{k_{l}}(\ff)$,
where assuming $k_{1}\geq k_{2}\geq\cdots\geq k_{l}$. So $A$ as
an ungraded algebra is isomorphic to 
\[
\left(M_{k_{1}}(\ff)\times\cdots\times M_{k_{l}}(\ff)\right)\otimes M_{|\gg|}(\ff)=M_{n=k_{1}+|\gg|}(\ff)\times\cdots\times M_{k_{l}+|\gg}(\ff).
\]
As a result, $A$ is PI equivalent to $M_{n}(\ff)$. So, $F$ is a
central polynomial of $M_{n}(\ff)$. Thus, $W=F\left(M_{k_{1}}(\ff)\otimes M_{|\mathfrak{g}|}(\ff)=M_{n}(\ff)\right)=\ff\rho\otimes I$\textbackslash{},
where $\rho=1_{M_{k_{1}}(\ff)}$ is a central idempotent of $\ff^{\alpha}H$.
By the previous Lemma, the $e$-component of $W$ is not zero, hence
$f$ is a non-identity. Thus, by \lemref{CentralToGraded}, $f$ is
a central polynomial of $B$ of degree $e$.
\end{proof}

\subsubsection{$G$-graded central polynomials for lower exponent algebras.}

We start with a definition:
\begin{defn}
A $G$-graded polynomial $f$ is called $d$-\emph{sharp, if for every
$G$-simple algebra $A$ the following holds:}

\begin{itemize}
\item $\exp(A)=d\Longrightarrow f\notin id_{G}(A)$.
\item $\exp(A)<d\Longrightarrow f\in id_{G}(A)$.
\end{itemize}
\end{defn}
\begin{thm}
\label{thm:NeededCentralPolynomials}Suppose $A$ is a $G$-simple
$\ff$-algebra, \foreignlanguage{english}{where $\ff$ is a characteristic
zero field,} which is ungraded PI. Every ungraded central polynomial
$f$ of $A$ is $G$-strong and $\rho(f)$ is $\exp(A)$-sharp.
\end{thm}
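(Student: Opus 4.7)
The plan is to reduce to a ``split'' algebra via scalar extension. By \corref{kapalnsky} and \thmref{graded-wedderburn}, $A$ is finite dimensional over $\ll = Z(A)_{e}$, with $A \cong M_{\gg}(\mathbf{D})$ for some finite dimensional $G$-division $\ll$-algebra $\mathbf{D}$; in particular $\mbox{supp}(A)$ generates a finite subgroup of $G$, and the $G$-strong condition is vacuous for degrees outside it. Setting $\kk = \overline{\ll}$ and $B := \kk \otimes_{\ll} A$, \lemref{StructureOfGDivision} yields $B \cong \kk^{\alpha} H \otimes M_{\gg}(\kk)$ as a $G$-graded $\kk$-algebra. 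By \thmref{GsimpleAndPI}, $id_{G,\ff}(A) = id_{G,\ff}(B)$, and this equality descends to ungraded identities via the embedding $\fxx \hookrightarrow \fxg$ sending $x_{i} \mapsto \sum_{g} x_{i,g}$ (which is a bijection between the ungraded and graded identities of any fixed algebra). Consequently $f$ is also an ungraded central polynomial of $B$.

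For the first claim, I would apply \thmref{Central!} to $B$ (which has the explicit form required) to conclude that $f$ is $G$-strong for $B$ over $\kk$, hence over $\ff$. Then use the natural $G$-graded $\ff$-algebra embedding $A \hookrightarrow B$, $a \mapsto 1 \otimes a$ (injective since $A$ is a free $\ll$-module and $\ll\hookrightarrow\kk$), to pull the property back: if $a_{i} \in A_{g_{i}}$ are homogeneous with $f_{1}(a_{1}, \ldots, a_{l}) \neq 0$ in $A$, then the same inequality holds in $B$; by strength in $B$, $\rho(f_{1})(a_{1}, \ldots, a_{l}) \neq 0$ in $B$; but this element actually lies in $A$, so it is non-zero in $A$ as well.

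For sharpness of $\rho(f)$ with $d := \exp(A)$, two cases arise. If $A'$ is $G$-simple with $\exp(A') < d$, \lemref{CentralAndPosner} gives $f \in id_{\ff}(A') \subseteq id_{G,\ff}(A')$; decomposing $f = \sum_{g} \rho_{g}(f)$ into graded components and evaluating at homogeneous substitutions shows that each $\rho_{g}(f)$ is separately a graded identity of $A'$ (since distinct homogeneous components cannot cancel), whence $\rho(f) \in id_{G,\ff}(A')$. If instead $\exp(A') = d$, \lemref{CentralAndPosner} gives that both $A$ and $A'$ are PI-equivalent to $M_{n}(\ff)$ with $d = n^{2}$, so $f$ is central for $A'$ as well; applying the first claim to $A'$ (the same argument works for any $G$-simple PI algebra on which $f$ is central) gives that $f$ is $G$-strong for $A'$. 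The main obstacle here is converting a raw non-identity witness for $f$ into a homogeneous evaluation: after replacing $f$ by a multilinearization (which preserves both centrality and non-identity in characteristic zero), any non-zero ordinary evaluation of $f$ on $A'$ expands via the homogeneous decomposition of its inputs into a sum of evaluations at homogeneous substitutions, so some choice yields $f_{1} \neq 0$; strength then forces $\rho(f_{1}) \neq 0$ at that evaluation, establishing $\rho(f) \notin id_{G,\ff}(A')$.
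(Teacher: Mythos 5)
Your argument is correct and takes essentially the same route as the paper: pass to the split algebra $B=\kk^{\alpha}H\otimes M_{\gg}(\kk)$ furnished by \thmref{GsimpleAndPI}, invoke \thmref{Central!} there, and derive sharpness from \lemref{CentralAndPosner}. You additionally spell out two points the paper leaves implicit --- pulling the pointwise strongness condition back from $B=\kk\otimes_{\ll}A$ to $A$ along the graded embedding (rather than relying only on the equality of graded $T$-ideals), and the equal-exponent half of sharpness via multilinearization and self-application of the first claim --- which is a tightening of the same proof rather than a different one.
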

\begin{proof}
By \thmref{GsimpleAndPI}, there is a field $\ff\subseteq\kk$ and
a $G$-graded $\kk$-algebra $B=\kk^{\alpha}H\otimes M_{\gg}(\kk)$
such that $id_{G,\ff}(A)=id_{G,\ff}(B)$. Hence, by \thmref{Central!},
$f$ is $G$-strong central polynomial of $A$. 

The second part follows from \lemref{CentralAndPosner} since every
$G$-simple algebra over a characteristic zero field is semisimple
(hence semiprime).
\end{proof}
\selectlanguage{english}%

\section{\label{sec:-graded-posner's-theorem}$G$-graded Posner's theorem}

In order to introduce the main theorem we need the following definitions.
\begin{defn}
\label{def:semiprime}Let $G$ be any group and suppose that $A$
is a $G$-graded $F$-algebra. We say that $A$ is a \emph{$G$-prime
algebra} if for every two graded ideals $I$ and $J$ such that $IJ$
is equal to the zero ideal, one of them is already the zero ideal.
A graded ideal $P$ of $A$ is called \emph{$G$-prime} if the graded
algebra $A/P$ is $G$-prime.

Furthermore, $A$ is called \emph{$G$-graded semiprime} if the intersection
of all $G$-graded prime ideals of $A$ is zero.
\end{defn}
\begin{rem}
\label{rem:SemiprimeGradedOrNot}It is a simple task to verify that
every $G$-primitive algebra is also $G$-prime. Moreover, by \foreignlanguage{american}{\cite{Cohen1984},
every $G$-semiprime is also (ungraded) semiprime.} 
\end{rem}
In the previous section we established that in the case where $A$
is $G$-primitive and satisfies an ordinary PI, $A$ is, in fact,
of the form $M_{\mathfrak{g}}(F^{\alpha}H)$ given that the field
$F$ is algebraically closed. The $G$-graded version of Posner's
theorem asserts a similar ``rigidity'' result for $G$-prime algebras
(which satisfy a PI). Here is the precise statement: 
\begin{conjecture}
\label{conj:G-Posner}Suppose $G$ is any group and $A$ is a $G$-prime
$F$-algebra which also satisfies a PI, then $S=Z(A)_{e}$ is a domain
and $(A\subseteq)S^{-1}A$ satisfies the conclusion of \corref{specific-shape}
for the field $K=S^{-1}S$ (In particular, this algebra is $G$-simple
and finite dimensional over $K$).
\end{conjecture}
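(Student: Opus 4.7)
The plan breaks into three stages: (1) establish that $S=Z(A)_e\setminus\{0\}$ is a commutative domain acting torsion-freely on $A$; (2) prove $S\neq\emptyset$; and (3) form $S^{-1}A$ and verify it is $G$-simple and finite dimensional over $K=S^{-1}S$.

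For (1), suppose $s_1,s_2\in Z(A)_e$ satisfy $s_1s_2=0$. Since each $s_i$ is central and homogeneous of degree $e$, the sets $As_i$ are two-sided $G$-graded ideals of $A$ (two-sided because $bas_i=as_ib$ for all $a,b\in A$; graded because $(As_i)_g=A_g s_i$). Their product is $As_1\cdot As_2=A^2 s_1 s_2=0$, so $G$-primeness forces some $s_i=0$. Torsion-freeness is analogous: if $sa=0$ for $s\in S$ and a non-zero homogeneous $a\in A$, then the non-zero $G$-graded ideals $As$ and $AaA$ satisfy $As\cdot AaA=s\cdot A\cdot aA\cdot A=A(sa)A\cdot A=0$, again contradicting $G$-primeness.

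For (2), the main tool is Theorem \thmref{NeededCentralPolynomials}. Since $A$ is PI, its ordinary PI-exponent $d$ is defined and finite; one aims to locate a $G$-simple ``quotient'' of $A$ of exponent exactly $d$, so that a $d$-sharp ordinary central polynomial $f$ can be lifted to $A$. Its $e$-part $\rho(f)$ is a $G$-graded central polynomial of $A$ by Lemma \lemref{CentralToGraded}, and, since $f$ is strong, Corollary \corref{usufull} forces $\rho(f)$ to take some non-zero value on $A$, which lies in $Z(A)_e$. Producing the required $G$-simple object is the heart of the matter; when $G$ is finite one invokes Theorem \thmref{GsimpleAndPI} directly, and when $G$ is residually finite one passes to a finite quotient $G/N$ (choosing $N$ to separate the finitely many homogeneous evaluations under consideration) and works with the $G/N$-graded algebra, which is exactly the route that proves the Main Theorem \thmref{MainTheorem}.

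Granting (2), step (3) is essentially formal. Centrality of $S$ trivialises the Ore conditions, so $S^{-1}A$ is defined, $A$ embeds in it, and $K=S^{-1}S$ is the fraction field of the domain $Z(A)_e$. Any non-zero $G$-graded ideal $J\lhd S^{-1}A$ meets $A$ in a non-zero $G$-graded ideal $I$; since $I$ is itself a $G$-prime PI algebra (any graded ideal of a $G$-prime algebra is) the central-polynomial argument of step (2) applies to produce a non-zero element of $I\cap Z(A)_e\subseteq J\cap S$, which becomes a unit in $S^{-1}A$ and forces $J=S^{-1}A$. Thus $S^{-1}A$ is $G$-simple. Finite-dimensionality over $K$ then follows from $G$-graded Kaplansky (Corollary \corref{kapalnsky}): $S^{-1}A$ inherits the original PI from $A$, is $G$-primitive (because $G$-simple), and has $Z(S^{-1}A)_e$ a field containing $K$, so the bound $\dim_K(S^{-1}A)_T\le d/2$ applies. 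The genuine obstacle is step (2) for general $G$: the strong-central-polynomial machinery of Section \secref{Preliminaries} rests on the structure theorem \thmref{GsimpleAndPI}, proved only for finite $G$, and no analogue of the residual-finiteness reduction is available for arbitrary groups—which is precisely why the statement is posed as a conjecture rather than a theorem.
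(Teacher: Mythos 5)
Your proposal follows essentially the same route as the paper: the domain/torsion-freeness of $Z(A)_{e}$ via products of graded ideals and $G$-primeness, production of a nonzero degree-$e$ central element from a $d$-sharp strong central polynomial (reducing to $A[x]$, a subdirect product of $G$-simple quotients of maximal exponent, and a finite quotient $G/N$ in the residually finite case), and then central localization plus graded Kaplansky for $G$-simplicity and finite dimensionality. You also correctly identify, exactly as the paper does, that the argument only goes through for residually finite $G$ and that the statement for arbitrary groups remains a conjecture.
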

In this section we prove this conjecture for $G$ \textbf{residually
finite}. The first step is to establish the claim for $G$ finite.
For this we follow the footsteps of Rowen (\cite{Rowen1973}) and
prove the following key theorem.
\selectlanguage{american}%
\begin{thm}
\label{thm:G-per-posner}Suppose $A$ is a $G$-graded $\ff$-algebra,
where $G$ is a \textbf{finite} group and $\ff$ is of characteristic
zero. Suppose further that $A$ is $G$-semiprime and ungraded PI.
Then, every $G$-graded ideal $I$ intersects non-trivially $Z(A)_{e}$.
\end{thm}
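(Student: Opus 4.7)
The plan is to manufacture a non-zero element of $I\cap Z(A)_{e}$ by evaluating a carefully chosen $G$-graded central polynomial on homogeneous elements of $I$. First, since $\ff$ has characteristic zero and $G$ is finite, \thmref{GSemisimpleAsSemisimple} combined with \remref{SemiprimeGradedOrNot} gives $J_{G}(A)=J(A)=0$, so $A$ embeds as a subdirect product $\phi:A\hookrightarrow\prod_{i\in\jj}A_{i}$ of its $G$-primitive PI quotients $A_{i}=A/M_{i}$. By \corref{kapalnsky} each $A_{i}$ is in fact $G$-simple and finite-dimensional over its $e$-center; in particular each $\exp(A_{i})$ is finite.

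Let $\jj_{I}=\{i\in\jj:I\not\subseteq M_{i}\}$, which is non-empty because $I\neq 0$ and $\bigcap_{i}M_{i}=0$. Set $d_{0}=\max_{i\in\jj_{I}}\exp(A_{i})$ and pick $i_{0}\in\jj_{I}$ attaining this maximum. Take $f\in\fxx$ to be a multilinear ungraded central polynomial of $M_{d_{0}}(\ff)$, for instance the Regev polynomial $L_{d_{0}}$. By \lemref{CentralAndPosner}, $f$ is central on every semiprime PI algebra of exponent $d_{0}$ (in particular on $A_{i_{0}}$) and is an identity of every semiprime algebra of strictly smaller exponent. By \thmref{NeededCentralPolynomials}, $f$ is $G$-strong for $A_{i_{0}}$ and $\rho(f)$ is $d_{0}$-sharp. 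Since $\phi_{i_{0}}(I)$ is a non-zero $G$-graded ideal of the $G$-simple algebra $A_{i_{0}}$, it equals $A_{i_{0}}$, and the restriction $\phi_{i_{0}}|_{I}$ is a graded surjection. Choose a graded assignment $g_{1},\ldots,g_{l}\in G$ and homogeneous $\bar{a}_{j}\in(A_{i_{0}})_{g_{j}}$ with $f_{1}(\bar{a}_{1},\ldots,\bar{a}_{l})\neq 0$, where $f_{1}:=f(x_{1,g_{1}},\ldots,x_{l,g_{l}})$; the strong property then yields $\rho(f_{1})(\bar{a}_{1},\ldots,\bar{a}_{l})\neq 0$ as well. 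Lift each $\bar{a}_{j}$ to a homogeneous $a_{j}\in I_{g_{j}}$.

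The candidate central element is $c:=\rho(f_{1})(a_{1},\ldots,a_{l})$. Multilinearity of $f$ and the fact that $I$ is a two-sided ideal give $c\in I$; since $\rho(f_{1})$ is $e$-graded, $c\in A_{e}$; and $\phi_{i_{0}}(c)=\rho(f_{1})(\bar{a}_{1},\ldots,\bar{a}_{l})\neq 0$ ensures $c\neq 0$. The main obstacle, and where the setup pays off, is proving $c\in Z(A)$. Via the injectivity of $\phi$ this reduces to checking $\phi_{i}(c)\in Z(A_{i})$ for every $i\in\jj$, which I would split into three cases. If $\exp(A_{i})=d_{0}$, then $f$ is central on $A_{i}$ (PI-equivalence) and $G$-strong there, so \corref{usufull} applied to $A_{i}$ forces $\phi_{i}(c)\in Z(A_{i})$ (either $\rho(f_{1})$ is a central polynomial of $A_{i}$, or it is already an identity). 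If $\exp(A_{i})<d_{0}$, then $d_{0}$-sharpness of $\rho(f)$ makes $\rho(f)$, hence its specialization $\rho(f_{1})$, an identity of $A_{i}$, so $\phi_{i}(c)=0$. If $\exp(A_{i})>d_{0}$, then by maximality of $d_{0}$ within $\jj_{I}$ we have $i\notin\jj_{I}$, so each $a_{j}\in M_{i}$ and again $\phi_{i}(c)=0$. These three cases exhaust $\jj$, so $c\in I\cap Z(A)_{e}\setminus\{0\}$ and the proof concludes.
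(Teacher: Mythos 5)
Your overall strategy --- subdirect decomposition into $G$-simple quotients, selecting the quotient of maximal exponent among those not containing $I$, and evaluating the $e$-component of a sharp strong central polynomial on homogeneous lifts from $I$ --- is the same as the paper's, and your three-case componentwise verification that $c\in Z(A)$ is a legitimate, slightly more explicit substitute for the paper's reduction via \lemref{GSemisimpleIdeals}. However, there is a genuine gap at the very first step. You assert that \thmref{GSemisimpleAsSemisimple} together with \remref{SemiprimeGradedOrNot} yields $J_{G}(A)=J(A)=0$. Those two facts give only that $A$ is (ungraded) semiprime and that $J_{G}(A)=J(A)$; semiprime does \emph{not} imply semisimple. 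For instance $\ff[[t]]$, or the localization $\ff[t]_{(t)}$, trivially graded, is a commutative domain --- hence $G$-prime and PI --- with Jacobson radical $(t)\neq0$. Without $J_{G}(A)=0$ the intersection $\bigcap_{i}M_{i}$ of the $G$-primitive ideals need not vanish, so your map $\phi$ need not be injective; both the nonvanishing of $c$ and the verification $c\in Z(A)$ are carried out through $\phi$ and therefore collapse.

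The missing ingredient is Amitsur's theorem, which is exactly how the paper opens its proof: if $A$ is semiprime then $A[x]=A\otimes_{\ff}\ff[x]$ is semisimple (hence $G$-semisimple by \thmref{GSemisimpleAsSemisimple}). One first replaces $A$ and $I$ by $A[x]$ and $I[x]$, noting $Z(A[x])_{e}=Z(A)_{e}[x]$, so that a non-zero element of $I[x]\cap Z(A[x])_{e}$ yields, by extracting a coefficient, a non-zero element of $I\cap Z(A)_{e}$. After that reduction your argument goes through essentially verbatim. Two small further points: the central polynomial you want is one for $M_{n}(\ff)$ with $n^{2}=d_{0}=\exp(A_{i_{0}})$, not for $M_{d_{0}}(\ff)$ (the Regev polynomial $L_{d}$ is central for $M_{n}(\ff)$ with $d=n^{2}$); and in the case $\exp(A_{i})=d_{0}$ you should allow for the possibility that the particular graded specialization $f_{1}$ is already a graded identity of $A_{i}$, which is harmless since then $\phi_{i}(c)=0$.
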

\begin{proof}
By \cite{Cohen1984} $A[x]=A\otimes_{\ff}\ff[x]$ is $G$-semisimple,
where the $G$-grading is given by $A[x]_{g}=A_{g}[x]$. Since $Z(A[x])=(Z(A))[x]$,
it is suffice to prove the theorem for $A[x]$ and $I[x]$, so we
may assume from the beginning that $A$ is $G$-semisimple. By \lemref{GSemisimpleIdeals},
$I$ is $G$-semisimple and $Z(I)\subseteq Z(A)$. So it is enough
to show that every $G$-semisimple PI algebra $A$ satisfies $Z(A)_{e}\neq\{0\}$.
This will follow if we will show that $A$ has a degree $e$ central
polynomial.

By the proof of \lemref{GSemisimpleIdeals}, $A$ is a subdirect product
of $G$-simple $\ff$-algebras $A_{j}$, where $j\in\mathfrak{J}$.
In other words, the map 
\[
\phi:A\to\prod_{j\in\mathfrak{J}}A_{j}
\]
 is a $G$-graded embedding and $\pi_{i}:A\to A_{i}$ is onto, where
$\pi_{i}:A\to\prod_{j\in\mathfrak{J}}A_{j}\to A_{i}$ is the natural
map.

Notice that for every $i\in\jj$, $id(A)\subseteq id(A_{j})$. So
$\exp(A_{j})\leq\exp(A)$. Choose $j_{0}\in\mathfrak{J}_{0}$ such
that $\exp_{\ff}(A_{j_{0}})$ is maximal among the exponents of all
other algebras corresponding to elements in $\mathfrak{J}_{0}$. Write
$B$ for $A_{j_{0}}$ and $d$ for $\exp(B)$. By \thmref{NeededCentralPolynomials},
$B$ has a strong polynomial $F$ such that $f=\rho(F)$ is $d$-sharp.
That is, $f$ is central for $A_{j_{0}}$ and for every other $j\in\jj$
$f$ is either an identity of $A_{j}$ or a central polynomial of
$A_{j}$. In other words, $f(A)\in Z\left(\prod_{j\in\mathfrak{J}}A_{j}\right)_{e}\cap A\subseteq Z(A)_{e}$.
\end{proof}
\begin{rem}
\label{rem:SeeMe}The proof above shows that if $A$ is $G$-semiprime
($G$ is a finite group), then every ungraded central polynomial $f$
of $A$ is $G$-strong central polynomial of $A$.
\end{rem}

\begin{rem}
\label{rem:OnlyFinite}Notice that for $G$ infinite, it is no longer
true that $A[x]$ is $G$-semisimple (take $G=\mathbb{Z}$ and consider
$A=\ff[x]$ where $\deg x^{n}=n\in\mathbb{Z}$). As a result, in order
to generalize the previous theorem to infinite groups one should introduce
a d idea. 
\end{rem}
As a result we obtain \conjref{G-Posner} for finite groups. Here
is the precise statement.
\begin{cor}
\label{cor:conj-for-finite}Let $G$ be a finite group and $\ff$
a field of characteristic zero. Suppose $A$ is $G$-prime over $\ff$
and satisfies an ordinary PI. Then, $S=Z(A)_{e}^{\times}$ does not
contain zero divisors of $A$. As a result $A$ is $G$-embedded in
$A_{1}=S^{-1}A$ which is a $G$-graded $\kk=S^{-1}Z(A)_{e}$-algebra.
Moreover, $A_{1}$ is finite dimensional $G$-simple $\kk$-algebra.
\end{cor}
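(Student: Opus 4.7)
The plan is to deduce the corollary from Theorem \ref{thm:G-per-posner} together with the $G$-graded Kaplansky corollary (Corollary \ref{cor:kapalnsky}). Observe first that $G$-prime implies $G$-semiprime (the zero ideal is itself $G$-prime), so Theorem \ref{thm:G-per-posner} applies to $A$.

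The first step is to verify that every $0\neq s\in S=Z(A)_e\setminus\{0\}$ is not a zero divisor in $A$. Set $I=\operatorname{Ann}_A(s)=\{a\in A:sa=0\}$; because $s$ is central and $e$-homogeneous, $I$ is a two-sided $G$-graded ideal, and $I$ equals the right annihilator of $s$ as well. Let $J$ denote the two-sided graded ideal of $A$ generated by $s$. A direct verification (using that $sb\cdot i=b\cdot si=0$ and $s\cdot (bi)=0$ whenever $si=0$) shows $J\cdot I=0$. Since $s\in J$ is nonzero, $G$-primeness forces $I=0$. In particular $Z(A)_e$ has no zero divisors, so it is an integral domain, and the central multiplicative set $S$ consisting of non-zero-divisors of $A$ permits the (central) Ore localization $A_1=S^{-1}A$, into which $A$ embeds. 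The grading $(A_1)_g=S^{-1}A_g$ is well defined because $S\subseteq A_e$, and $\kk=S^{-1}Z(A)_e$ is the fraction field of $Z(A)_e$.

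The second step is $G$-simplicity of $A_1$. Given $0\neq \mathcal I\triangleleft_G A_1$, pick a nonzero homogeneous $s^{-1}a\in\mathcal I$; then $a=s\cdot s^{-1}a$ lies in $\mathcal I\cap A$ and is nonzero, so $\mathcal I\cap A$ is a nonzero $G$-graded ideal of $A$. By Theorem \ref{thm:G-per-posner}, $\mathcal I\cap A\cap Z(A)_e\neq 0$; any such element belongs to $S$, hence is a unit of $A_1$, so $\mathcal I=A_1$. Thus $A_1$ is $G$-simple, and since it is a localization of $A$ it still satisfies the ambient PI.

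Finally, $A_1$ is $G$-simple and PI, hence $G$-primitive, so Corollary \ref{cor:kapalnsky} gives $\dim_{Z(A_1)_e}A_1<\infty$ with $Z(A_1)_e$ a field. It remains to identify $Z(A_1)_e$ with $\kk$. Clearly $\kk\subseteq Z(A_1)_e$. Conversely, if $z\in Z(A_1)_e$, write $z=s^{-1}a$ with $s\in S$ and $a\in A_e$; for any $b\in A$ one has $ab=szb=sbz=ba$, so $a\in Z(A)_e$ and $z\in\kk$. Therefore $A_1$ is a finite-dimensional $G$-simple $\kk$-algebra, as desired. The only delicate point is step one (the absence of zero divisors coming from $S$), which is where the $G$-prime hypothesis is genuinely used; the rest is essentially a clean application of the localization formalism together with the two earlier theorems.
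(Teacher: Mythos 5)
Your proof is correct and follows essentially the same route as the paper: use $G$-primeness on a product of graded ideals to show elements of $S$ are not zero divisors, localize, apply Theorem \ref{thm:G-per-posner} to see that a nonzero graded ideal of $A_{1}$ contains a unit, and finish with Corollary \ref{cor:kapalnsky}. Your annihilator version of the first step (taking $J\cdot\operatorname{Ann}_{A}(s)=0$ with $s\in J$) is a slightly cleaner packaging of the paper's $AcA\cdot AaA=0$ argument, and your explicit identification of $Z(A_{1})_{e}$ with $\kk$ is a welcome addition the paper leaves implicit.
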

\begin{proof}
. Regarding the first part, it is enough to prove that no element
$c\in Z(A)_{e}^{\ast}$ annihilates a non-zero homogenous element
$a\in A$. Suppose otherwise: $ca=0$. Hence, $AcA\cdot AaA$ is a
product of graded ideals which is equal to $A^{3}caA=0$. Thus, $AcA=0$
or $AaA=0$. It is therefore enough to show that $AaA=0$ is impossible
(since it is more general than $AcA=0$). Indeed, in that case $Aa$
is a graded ideal, thus we obtain from $AaA=0$ that $Aa=0$ (since
$A\neq0$). But now $\ff a$ is an ideal and so $\ff a=0$, which
yields $a=0$ - a contradiction.

Suppose $I$ is a non-zero $G$-graded ideal of $A_{1}$. Since $I\cap\kk=I\cap Z(A_{1})\neq0$,
we conclude that $I$ must be equal to $A_{1}$. By \corref{kapalnsky}
we are done.
\end{proof}

\subsubsection{Grading by a quotient group}
\begin{defn}
Let \foreignlanguage{english}{$A$ be an $\ff$-algebra graded by
$G$ and let $Q=G/N$ be any quotient group of $G$. We define the
\emph{induced $Q$-grading} on $A$ by setting 
\[
A_{gN}=\oplus_{h\in N}A_{gn},
\]
for every $g\in G$.}
\end{defn}
\selectlanguage{english}%
\begin{example}
Two notable special cases are when $N=\{e\}$ and $N=G$. In the first
case, we obtain the given $G$-grading on $A$ and in the second we
are in the situation of having no grading at all 
\end{example}
\begin{defn}
Let $G$ be a group and $Q=G/N$ be any quotient group of $G$. Denote
by $\psi_{G,Q}$ the $\ff$-algebra map $\psi_{G.Q}:\fxg\to\ff\left\langle X_{Q}\right\rangle $
induced by $\psi_{G,Q}(x_{i,g})=x_{i,gN}$.

Furthermore. $f\in\ff\left\langle X_{G}\right\rangle $ is said to
be \emph{$Q$-stable }if the diagram:
\[
\xymatrix{\fxg\ar@{->}[r]^{\psi_{G,Q}}\ar@{->}[d]^{\rho_{g}} & \ff\left\langle X_{Q}\right\rangle \ar@{->}[d]^{\rho_{gN}}\\
\ff\left\langle X_{Q}\right\rangle _{e}\ar@{->}[r]^{\psi_{G,Q}} & \ff\left\langle X_{Q}\right\rangle _{eN}
}
\]
commutes for every $g\in G$ \textbf{on $f$}. 
\end{defn}
\begin{example}
Let $G=C_{4}=\left\langle \tau\right\rangle $ and $N=\left\langle \tau^{2}\right\rangle $.
The polynomial $f_{1}=x_{2,\tau^{2}}$ is not $Q$-stable but $f_{2}=2x_{1,e}$
is. 
\end{example}

\subsubsection{Residually finite case.}
\selectlanguage{american}%
\begin{thm}
Let $G$ be a residually finite group and $\ff$ a characteristic
zero field. Suppose $A$ is $G$-semiprime and ungraded PI. Then,
every $G$-graded ideal $I$ intersects non-trivially $Z(A)_{e}$.
\end{thm}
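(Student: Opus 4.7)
My strategy is to reduce the residually finite case to the finite case (Theorem~\ref{thm:G-per-posner}) by passing to a finite quotient $Q = G/N$ of $G$ chosen carefully via residual finiteness so that the $G$-degree information carried by a single nonzero evaluation is detected by the $Q$-grading.

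I begin by reducing to a problem about $I$ itself. Since $A$ is $G$-semiprime and ungraded PI, Remark~\ref{rem:SemiprimeGradedOrNot} gives that $A$ is (ungraded) semiprime, so the ideal $I$ is semiprime as a ring, hence a semiprime PI $G$-graded algebra of some finite exponent $d = \exp(I) = n^2$. One also checks $Z(I) \subseteq Z(A)$ in the standard way: for $z \in Z(I)$ and $a \in A$ the commutator $[z,a]$ lies in $I \cap \mathrm{ann}_A(I)$, which is zero by semiprimeness. Thus any central element of $I$ of $G$-degree $e$ automatically lies in $Z(A)_e$.

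Next, by Lemma~\ref{lem:CentralAndPosner}, the Regev polynomial $F := L_d$ is a multilinear ungraded central polynomial of $I$. Hence $F(b_1,\ldots,b_m) \neq 0$ for some $b_i \in I$, and decomposing each $b_i$ into $G$-homogeneous pieces and using multilinearity produces $a_i \in I \cap A_{g_i}$ with $F(a_1,\ldots,a_m) \neq 0$ for some $g_1,\ldots,g_m \in G$.

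The key step is now to use residual finiteness. Set $S := \{g_{\sigma(1)} \cdots g_{\sigma(m)} : \sigma \in S_m\} \setminus \{e\}$, a finite subset of $G \setminus \{e\}$. Residual finiteness lets me choose a finite-index normal subgroup $N \triangleleft G$ with $N \cap S = \emptyset$ (intersect finitely many finite-index normal subgroups each separating one element of $S$ from $e$). View $I$ as $Q$-graded for the finite quotient $Q := G/N$. Since $I$ is ungraded semiprime, \cite{Cohen1984} gives that $I$ is $Q$-semiprime, so by Remark~\ref{rem:SeeMe} the ungraded central polynomial $F$ is $Q$-strong for $I$. Applying $Q$-strongness to our evaluation, $\rho_{eN}(F)(a_1,\ldots,a_m) \neq 0$. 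But by the choice of $N$, a permutation $\sigma$ contributes to this $eN$-component precisely when $g_{\sigma(1)}\cdots g_{\sigma(m)} \in N$, which forces $g_{\sigma(1)}\cdots g_{\sigma(m)} = e$. Therefore $\rho_{eN}(F)(a_1,\ldots,a_m) = \rho_e(F)(a_1,\ldots,a_m)$, and this common value is nonzero. It lies in $I$ (each monomial of $F$ contains all $a_i \in I$) and in $A_e$ by construction, and it is central: it is the $G$-degree $e$-part of $F(a_1,\ldots,a_m) \in Z(I) \subseteq Z(A)$, and the $e$-component of any central element is itself central, as one sees by comparing $A_h$-components of the relation $za = az$ for $a \in A_h$ (the cross-terms between $z_n$ for $n \neq e$ never fall in $A_h$). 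This produces the desired nonzero element of $I \cap Z(A)_e$.

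The main obstacle is bridging the $Q$-graded $eN$-component (which $Q$-strongness controls) and the finer $G$-graded $e$-component; residual finiteness enters precisely to allow a choice of $N$ that separates the finite ``bad set'' $S$ of non-identity products from $e$ in a finite quotient, forcing the two components to coincide on the chosen evaluation.
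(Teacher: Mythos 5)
Your proof is correct and follows essentially the same route as the paper: reduce to the ideal $I$, evaluate an ungraded central polynomial of $I$ on homogeneous elements, and use residual finiteness to pick a finite quotient $Q=G/N$ in which the non-identity products of the degrees $g_i$ are separated from $e$, so that $Q$-strongness of the central polynomial (\remref{SeeMe}) forces the $G$-degree-$e$ component of the nonzero value to be nonzero. The only difference is in the preliminary reduction, and it is cosmetic: you get the needed facts about $I$ (semiprimeness, $Z(I)\subseteq Z(A)$) by direct semiprime-ring arguments, whereas the paper first passes to $A[x]$ to make everything semisimple and invokes \lemref{GSemisimpleIdeals}.
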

\begin{proof}
By \cite{Cohen1984} $A$ is a semiprime $\ff$-algebra. Thus, $A[x]$
is semisimple. As before we replace $A$ and $I$ by $A[x]$ and $I[x]$
what allows us to assume that $I$ (and $A$) is a semisimple algebra
and $Z(I)\subseteq Z(A)$. So, it will be suffice to show that $I$
has a strong central polynomial. In other words, we need to show that
if $A$ is $G$-graded and semisimple, it possess a strong central
polynomial. 

By \lemref{CentralAndPosner}, $A$ is PI equivalent to $M_{n}(\ff)$,
where $n^{2}=\exp(A)$. Hence Regev's polynomial $F=L_{n^{2}}(x_{1},...,x_{2n^{2}})$
is a central polynomial for $A$ (in fact, we could as well chosen
any other central polynomial of $M_{n}(\ff)$). As a result, there
are $g_{1},...,g_{2n^{2}}\in G$ for which there are $a_{g_{1}}\in A_{g_{1}},...,a_{g_{2n^{2}}}\in A_{g_{2n^{2}}}$
such that $F(a_{g_{1}},...,a_{g_{2n^{2}}})\neq0$. Denote $F_{G}=F(x_{1,g_{1}},...,x_{2n^{2},g_{2n^{2}}})$
and $f=\rho(F_{G})$. We will show that $f$ is central polynomial
of $A$.

Let $N$ be a normal finite index subgroup of $G$ such that all the
distinct products of $\{g_{1},...,g_{2n^{2}}\}$ of length $2n^{2}$
remain distinct in $Q=G/N$. Therefore, $F_{G}$ is $Q$-stable. Regard
$A$ as a $Q$-graded algebra. Hence, by the proof of \corref{conj-for-finite}
(see also \remref{SeeMe}), $\rho_{eN}\left(\psi_{G,Q}(F_{G})\right)$
is $Q$-central polynomial of degree $eN$ for $A$. 

Since $\psi_{G,Q}\left(f\right)=\rho_{eN}\left(\psi_{G,Q}(F_{G})\right)$,
it is clear that 
\[
f(A)\subseteq\left(\rho_{eN}\left(\psi_{G,Q}(F_{G})\right)\left(A\right)\right)\cap A_{e}\subseteq Z(A)_{e}.
\]
Finally, $\psi_{G,Q}(F_{G})(a_{g_{1}},...,a_{g_{2n^{2}}})=F_{G}(a_{g_{1}},...,a_{g_{2n^{2}}})\neq0$.
Hence, the fact that $F$ is $Q$-strong for $A$, forces 
\[
0\neq\rho_{eN}\left(\psi_{G,Q}(F_{G})\right)(a_{g_{1}},...,a_{g_{2n^{2}}})=f(a_{g_{1}},...,a_{g_{2n^{2}}}).
\]
All in all, we shown that $f$ is indeed a $G$-graded central polynomial
of $A$.
\end{proof}
\begin{cor}
\thmref{G-per-posner} holds also when $G$ is a residually finite
group and $\ff$ is of characteristic zero.
\end{cor}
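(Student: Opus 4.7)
The plan is essentially to cite the theorem immediately preceding this corollary: under the hypotheses (characteristic zero $\ff$, $A$ $G$-semiprime and ungraded PI), that theorem establishes precisely the conclusion of \thmref{G-per-posner} with ``finite $G$'' replaced by ``residually finite $G$.'' The corollary is therefore a restatement that packages the generalization under the same name for later reference. What I would write is a single observation: the hypotheses coincide, the conclusion (every nonzero $G$-graded ideal of $A$ intersects $Z(A)_e$ nontrivially) coincides, and the group-theoretic hypothesis has already been weakened in the preceding argument.

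Since the section is titled ``$G$-graded Posner's theorem,'' I would additionally record the stronger Posner-type conclusion — the analog of \corref{conj-for-finite} — in the residually-finite setting, which is what makes the name of the corollary earn its keep. The localization argument of \corref{conj-for-finite} uses the finiteness of $G$ only to invoke \thmref{G-per-posner}; with that hypothesis now relaxed, the same reasoning yields: if $A$ is also $G$-prime, then $S = Z(A)_e \setminus \{0\}$ contains no zero divisors of $A$ (any $ca=0$ with $c \in S$ and $a$ a nonzero homogeneous element forces $AcA \cdot AaA = 0$, and then a two-step reduction gives $a = 0$), so $A$ embeds $G$-gradedly in $A_1 = S^{-1}A$, a $G$-graded $\kk = S^{-1}Z(A)_e$-algebra. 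Every nonzero $G$-graded ideal of $A_1$ meets the field $\kk$ by the corollary and hence equals $A_1$, so $A_1$ is $G$-simple; \corref{kapalnsky}, which is valid for arbitrary $G$, then yields $\dim_\kk A_1 < \infty$.

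There is essentially no new obstacle at this stage. The delicate step — constructing an $e$-homogeneous $G$-graded central polynomial by pulling one back from a finite quotient $Q = G/N$ that separates the chosen $2n^2$-fold products of degrees, using \thmref{NeededCentralPolynomials} and \remref{SeeMe} — has already been absorbed into the preceding theorem. All other ingredients (\corref{kapalnsky}, \lemref{GSemisimpleIdeals}, and the structure theorems of Section~\secref{-graded-structure-theorems}) were proved in arbitrary-$G$ generality from the start, so the residually-finite corollary is a bookkeeping consequence.
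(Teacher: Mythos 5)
Your proposal is correct and matches the paper's (implicit) argument: the corollary carries no separate proof in the text and is just the immediately preceding theorem restated under the name of \thmref{G-per-posner}, exactly as you observe. Your additional remark that the localization argument of \corref{conj-for-finite} then goes through verbatim in the residually finite setting is consistent with the paper's intent and requires no new input.
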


\section{A consequence of the $G$-graded Posner's Theorem. }

In \cite{Aljadeff2014} Aljadeff and Haile proved the following theorem:
\begin{thm}
Suppose $G$ is a finite group and $A_{1},A_{2}$ are $G$-simple
algebras of finite dimension over an algebraically closed field $\ff$
of characteristic zero. Then, $id_{G}(A_{1})=id_{G}(A_{2})$ if and
only if $A_{1}$ and $A_{2}$ are $G$-isomorphic over $\ff$.
\end{thm}
\begin{note}
It is clear that if $A_{1}$ is $G$-isomorphic to $A_{2}$, then
$A_{1}$ and $A_{2}$ share the same $G$-graded identities. The other
direction requires all the work.
\end{note}
The proof they presented is based heavily on an elaborate analysis
of the structure of a $G$-simple algebra given in \thmref{graded-wedderburn}.
Here we present an alternative proof based on the $G$-graded Posner
theorem proven in the previous section. 

To start, recall the construction of the ($G$-graded) generic algebra
over $\ff$. Given a finite dimensional $G$-graded $\ff$-algebra,
chose a $G$-graded $\ff$-basis of $A$: $B=\cup_{g\in G}B_{g}$
($B_{g}$ consists of elements of a basis of $A_{g}$) and consider
the commutative variables 
\[
\Lambda=\left\{ t_{i,g,b}|\,b\in B_{g},\,g\in G,\,i\in\mathbb{N}\right\} .
\]
Finally denote by $U_{A}$ the $\ff$-subalgebra of $A\otimes_{\ff}\ff(\Lambda)$
generated by the elements $y_{i,g}=\sum_{b\in B_{g}}t_{i,g,b}b$.
It is easy to verify that the $G$-homomorphism 
\[
\phi:\fxg/id_{G}(A)\to U
\]
 given by $\phi(x_{i,g})=y_{i,g}$ is well defined and is $G$-isomorphism.
As a result, we may identify $\fxg/id_{G}(A)$ with $U$.

In our case we have $U_{i}\simeq\fxg/id_{G}(A_{i})$ ($i=1,2$) and
since $id_{G}(A_{1})=id_{G}(A_{2})$, we must have $U_{1}=U_{2}$.
As a result, we will denote this algebra (also) by $U$. Notice that
$U\subseteq A_{i}\otimes_{\ff}\ff(\Lambda_{i})$.
\begin{lem}
$U$ is $G$-semisimple.
\end{lem}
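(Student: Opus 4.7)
The plan is to first show that $U$ is $G$-prime by a direct argument exploiting the $G$-simplicity of $A_{1}$, and then to pass to the ungraded Jacobson radical via \cite{Cohen1984} and \thmref{GSemisimpleAsSemisimple} to conclude $J_{G}(U)=0$.

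For $G$-primeness, suppose $\bar{p},\bar{q}\in U$ are nonzero homogeneous elements with $\bar{p}\cdot U\cdot\bar{q}=0$. Choose homogeneous lifts $p,q\in\fxg$ whose variables, together with the variables of an arbitrary monomial $m$, lie in three disjoint sets. The hypothesis $\bar{p}\,U\,\bar{q}=0$ then reads $pmq\in id_{G}(A_{1})$ for every such $m$; equivalently, $p(\bar{a})\,r\,q(\bar{b})=0$ for all graded evaluations $\bar{a},\bar{b}$ and every $r\in A_{1}$, i.e.\ $p(\bar{a})\,A_{1}\,q(\bar{b})=0$. Since $\bar{p}\neq0$ there is a graded evaluation $\bar{a}_{0}$ for which $p(\bar{a}_{0})$ is a nonzero homogeneous element of $A_{1}$. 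The $G$-simplicity of $A_{1}$ then supplies $s_{i},t_{i}\in A_{1}$ with $\sum_{i}s_{i}\,p(\bar{a}_{0})\,t_{i}=1$, so for every $\bar{b}$ one computes $q(\bar{b})=\sum_{i}s_{i}\,p(\bar{a}_{0})\,t_{i}\,q(\bar{b})=0$. Hence $q\in id_{G}(A_{1})$, forcing $\bar{q}=0$, a contradiction.

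Once $G$-primeness is established, \cite{Cohen1984} (cf.\ \remref{SemiprimeGradedOrNot}) yields that $U$ is semiprime as an ungraded algebra. Since $U$ inherits all PIs of $A_{1}$ it is itself a PI algebra, and then classical PI theory --- Amitsur's result that the Jacobson radical of a PI ring is a nil ideal, combined with the fact that a semiprime PI ring contains no nonzero nil ideals --- gives $J(U)=0$. A final appeal to \thmref{GSemisimpleAsSemisimple} (valid as $G$ is finite and $\mbox{char}(\ff)=0$) yields $J_{G}(U)=J(U)=0$, whence $U$ is $G$-semisimple.

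The main obstacle is the $G$-primeness step: one must keep the three clouds of variables of $p$, $m$, and $q$ disjoint, and then genuinely exploit both the homogeneity of the nonzero value $p(\bar{a}_{0})$ and the $G$-simplicity of $A_{1}$ to write $1$ inside the two-sided graded ideal generated by $p(\bar{a}_{0})$; this homogeneous control is what is not available if one only knows that $A_{1}$ is semiprime or ungraded simple.
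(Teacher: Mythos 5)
Your first step --- the $G$-primeness of $U$ --- is correct: keeping the variable clouds of $p$, $m$, $q$ disjoint, using the homogeneity of a nonzero value $p(\bar{a}_{0})$, and writing $1\in A_{1}p(\bar{a}_{0})A_{1}$ via the $G$-simplicity of $A_{1}$ is exactly the right way to exploit the grading. The gap is in the last step. From ``$U$ is ($G$-)semiprime and PI'' one cannot conclude $J(U)=0$: the statement you attribute to Amitsur, that the Jacobson radical of a PI ring is nil, is false. A commutative (hence PI) local domain such as $\ff[t]_{(t)}$ is prime, is an $\ff$-subalgebra of the one-dimensional semisimple $\ff(t)$-algebra $\ff(t)$, and yet has Jacobson radical $(t)$, nonzero and without nilpotent elements. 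So semiprimeness plus PI (even plus an embedding into a finite-dimensional semisimple algebra, as $U\subseteq A_{1}\otimes\ff(\Lambda)$ provides) is not enough. The genuine Amitsur theorem in this circle of ideas concerns \emph{relatively free} algebras $\fxx/T$ for a $T$-ideal $T$; here $U=\fxg/id_{G}(A_{1})$ is relatively free only in the graded sense (its defining ideal is closed under graded endomorphisms, not under all endomorphisms of the free algebra on $X_{G}$), so even that result does not apply verbatim without a graded adaptation, which you would have to supply.

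The paper sidesteps all of this with a shorter argument that uses the relatively free structure of $U$ directly: a nonzero element of $J_{G}(U)$ is a non-identity of $A_{1}$ and hence survives some graded evaluation; since $U$ has infinitely many generators, that evaluation can be chosen so that the induced homomorphism $U\to A_{1}$ is moreover surjective; a surjection carries the radical into the radical, so $J_{G}(A_{1})\neq0$, contradicting the $G$-semisimplicity of $A_{1}$ (here \thmref{GSemisimpleAsSemisimple} identifies $J_{G}$ with $J$). To repair your route, either prove a graded Amitsur-type theorem for $U$, or simply graft this evaluation argument onto your (correct) primeness computation; the primeness itself, while true, is not what forces $J_{G}(U)=0$.
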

\begin{proof}
If $J_{G}(U)\neq0$, then there is a $G$-graded substitution of the
variables $x_{i,g}\in X_{G}$ by elements from $A_{1}$ (of course,
we could as well use $A_{2}$) for which $J_{G}(U)$ is mapped to
a non-zero ideal $I$ in $A_{1}$ and the corresponding map is surjective.
However, it is well known that Jacobson radical is mapped to Jacobson
radical when the map under consideration is surjective. We got a contradiction
to $A_{1}$ being $G$-semisimple.
\end{proof}
\begin{lem}
$Z(U)_{e}\subseteq\ff(\Lambda_{i})\cdot1_{A_{i}}\otimes1$ for $i=1,2$.
As a result, $Z(U_{e})$ does not contain any zero-divisors of $U$.
\end{lem}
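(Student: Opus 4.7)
The plan is to realize any element of $Z(U)_{e}$ as central in the much larger $G$-graded algebra $A_{i}\otimes_{\ff}\ff(\Lambda_{i})$ and then identify that center by invoking \propref{tensorGSimple}. Since $A_{i}$ is $G$-simple and finite dimensional over the algebraically closed field $\ff$, its degree-$e$ center equals $\ff$, so $A_{i}$ is $G$-central-simple. Applying \propref{tensorGSimple} with $B=\ff(\Lambda_{i})$ (a trivially graded field) gives
\[
Z\bigl(A_{i}\otimes_{\ff}\ff(\Lambda_{i})\bigr)_{e}=\ff(\Lambda_{i})\cdot(1_{A_{i}}\otimes 1),
\]
so it suffices to show that every $z\in Z(U)_{e}$ commutes with all of $A_{i}\otimes_{\ff}\ff(\Lambda_{i})$.

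The heart of the argument is a genericity step using algebraic independence of the commuting variables $t_{i,g,b}$. Given $z\in Z(U)_{e}$, only finitely many $t$'s appear in $z$. For each $g_{0}\in G$, I would pick an index $j$ so that none of the variables $t_{j,g_{0},b}$ ($b\in B_{g_{0}}$) occur in $z$, and then use the identity $[z,y_{j,g_{0}}]=0$ in $U$. Expanding $y_{j,g_{0}}=\sum_{b\in B_{g_{0}}}t_{j,g_{0},b}\cdot(b\otimes 1)$ linearly in these fresh indeterminates yields
\[
0=[z,y_{j,g_{0}}]=\sum_{b\in B_{g_{0}}}t_{j,g_{0},b}\cdot[z,b\otimes 1],
\]
and because the $t_{j,g_{0},b}$ are algebraically independent over the subring of $A_{i}\otimes\ff(\Lambda_{i})$ containing the coefficients $[z,b\otimes 1]$, each commutator $[z,b\otimes 1]$ must vanish. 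Varying $g_{0}\in G$ and using that $\cup_{g}B_{g}$ spans $A_{i}$, I conclude that $z$ commutes with $A_{i}\otimes 1$; it trivially commutes with $1\otimes\ff(\Lambda_{i})$, so $z\in Z(A_{i}\otimes_{\ff}\ff(\Lambda_{i}))_{e}$, which is what was needed.

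The ``as a result'' clause then follows immediately. If $0\neq c\in Z(U)_{e}$ and $cu=0$ for some $u\in U$, the first part gives $c=\lambda\cdot(1_{A_{1}}\otimes 1)$ with $\lambda\in\ff(\Lambda_{1})^{\times}$; but $A_{1}\otimes_{\ff}\ff(\Lambda_{1})$ is a free module over the field $\ff(\Lambda_{1})$, so multiplication by $\lambda$ is injective and hence $u=0$.

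The main obstacle will be the bookkeeping in the genericity step: one has to specify precisely the finite variable support of $z$ before selecting the fresh index $j$, and then verify that the coefficients $[z,b\otimes 1]$ really do lie in a subring over which the chosen $t_{j,g_{0},b}$ remain algebraically independent. Once this is set up correctly, the algebraic-independence conclusion and the rest of the argument are purely formal.
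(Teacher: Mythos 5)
Your proof is correct and follows essentially the same route as the paper: both arguments reduce the claim to showing that an element of $Z(U)_{e}$ is central in the ambient algebra $A_{i}\otimes_{\ff}\ff(\Lambda_{i})$ and then identify $Z\bigl(A_{i}\otimes_{\ff}\ff(\Lambda_{i})\bigr)_{e}=\ff(\Lambda_{i})\cdot 1_{A_{i}}\otimes 1$ via $Z(A_{i})_{e}=\ff 1_{A_{i}}$. The only difference is that you make explicit, via the fresh-index/algebraic-independence argument, the genericity step that the paper asserts without comment, and you invoke \propref{tensorGSimple} where the paper computes the center of the tensor product directly; both are fine.
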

\begin{proof}
First observe that $Z(A_{i})_{e}=\ff1_{A_{i}}$. This can be verified
easily using \thmref{graded-wedderburn}. As a result, if $f(y_{1,g_{1}},...,y_{n,g_{n}})\in Z(U_{e})$,
then $\phi(f)\in Z\left(A_{i}\otimes\ff(\Lambda_{i})\right)_{e}=Z(A_{i})_{e}\otimes\ff(\Lambda_{i})=\ff(\Lambda_{i})\cdot1_{A_{i}}\otimes1$.
\end{proof}
\begin{rem}
It is possible to prove that $Z(A_{i})=\ff1_{A_{i}}$ by only using
the classical Wedderburn theorem for (ungraded) semisimple algebras.
Here is a sketch: Set $A=A_{1}$. Since the Jacobson radical is nilpotent
for commutative algebras, one shows that the semisimplicity of $A$
forces $Z(A_{e})$ to be also semisimple. Next, if $e_{1},e_{2}$
are two nonzero orthogonal idempotents in $Z(A)_{e}$, then $Ae_{i}$
is $G$-graded ideal of $A$, so equals $A$. Hence, there is $a\in A$
for which $ae_{1}=e_{2}$. But this is an absurd since $0=e_{1}\cdot e_{2}=a\cdot e_{1}^{2}=ae_{1}=e_{1}$.
Thus $Z(U_{e})$ must be a field. Since surely, $\ff1\subseteq Z(U_{e})$
and $\ff$ is algebraically closed, we must have equality. 
\end{rem}
\begin{notation}
$S=Z(U)_{e}-\{0\}$.
\end{notation}
Due to the previous Lemma, 
\[
U\subseteq S^{-1}U\subseteq A_{i}\otimes\ff(\Lambda_{i}).
\]
Now by \thmref{G-per-posner}, we know that $S^{-1}U$ is $G$-simple
and f.d. over $\kk:=Z(S^{-1}U)_{e}\subseteq\ff(\Lambda_{i})$ (which
is a field). Furthermore, since the graded identities of a $G$-simple
algebra are defined over an algebraically closed field (this follows
from \thmref{GsimpleAndPI} and the proof of \thmref{PassToC}) and
the fact that $id_{G,\ff}(A_{i})=id_{G,\ff}(S^{-1}U)$ forces 
\[
id_{G,\ff(\Lambda_{i})}(S^{-1}U\otimes_{\kk}\ff(\Lambda_{i}))=id_{G,\ff(\Lambda_{i})}(A_{i}\otimes\ff(\Lambda_{i})).
\]

\begin{lem}
The $\ff(\Lambda_{i})$-algebra $S^{-1}U\otimes_{\kk}\ff(\Lambda_{i})$
is $G$-simple. Furthermore, the map 
\[
\nu:S^{-1}U\otimes_{\kk}\ff(\Lambda_{i})\to S^{-1}U\cdot\ff(\Lambda_{i})
\]
is a $G$-graded isomorphism.
\end{lem}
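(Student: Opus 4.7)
The plan is to deduce both statements as rather direct consequences of material already in place, with the first being an application of \propref{tensorGSimple} and the second a one-line argument using $G$-simplicity.

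For the first assertion, I would first record that $S^{-1}U$ is $G$-central-simple over $\kk$ in the sense of the paper's definition: it is $G$-simple and finite dimensional over $\kk=Z(S^{-1}U)_e$, which is a field, by the immediately preceding application of \thmref{G-per-posner}. Since $\ff(\Lambda_i)$ is a field containing $\kk$, I can view it as a $\{e\}$-simple unital $\kk$-algebra (trivially $\{e\}$-graded). Applying \propref{tensorGSimple} with $G_1=G$, $G_2=\{e\}$, $A=S^{-1}U$, $B=\ff(\Lambda_i)$, and base field $\kk$ instead of $\ff$, one concludes that $S^{-1}U\otimes_{\kk}\ff(\Lambda_i)$ is $G\times\{e\}=G$-simple, with center in degree $e$ equal to $Z(\ff(\Lambda_i))_e=\ff(\Lambda_i)$. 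This is exactly the first claim.

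For the second assertion, I observe that the map
\[
\nu:S^{-1}U\otimes_{\kk}\ff(\Lambda_{i})\to S^{-1}U\cdot\ff(\Lambda_{i})\subseteq A_i\otimes_{\ff}\ff(\Lambda_i),
\]
defined by $u\otimes \lambda\mapsto u\cdot\lambda$ (product taken inside $A_i\otimes_{\ff}\ff(\Lambda_i)$), is well defined and $\kk$-bilinear because the elements of $\kk\subseteq Z(S^{-1}U)_e$ act the same way from either side. It is manifestly surjective onto its image, and it respects the $G$-grading since $\ff(\Lambda_i)$ is concentrated in degree $e$. Therefore $\ker\nu$ is a $G$-graded (two-sided) ideal of the $G$-simple algebra $S^{-1}U\otimes_{\kk}\ff(\Lambda_i)$, so by $G$-simplicity it is either $0$ or the whole algebra. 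The latter is impossible because $\nu(1\otimes 1)=1\neq 0$, hence $\ker\nu=0$ and $\nu$ is a $G$-graded isomorphism.

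There is no serious obstacle here; the only point that requires a moment's care is verifying the hypotheses of \propref{tensorGSimple} in the form needed (base field $\kk$ rather than $\ff$, and $\ff(\Lambda_i)$ viewed as trivially graded over $\kk$). Once that is in place, both conclusions follow formally.
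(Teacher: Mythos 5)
Your proposal is correct and follows essentially the same route as the paper: the first assertion via \propref{tensorGSimple} (with $S^{-1}U$ viewed as $G$-central-simple over $\kk$ and $\ff(\Lambda_i)$ as a trivially graded field extension), and the second by noting that $\ker\nu$ is a $G$-graded ideal of a $G$-simple algebra that cannot be everything. You in fact supply slightly more detail than the paper (well-definedness of $\nu$ over $\kk$ and the explicit check $\nu(1\otimes1)\neq0$), which is welcome.
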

\begin{proof}
The first statement is clear by \propref{tensorGSimple} Moreover,
it is also clear that $\nu$ is a $G$-graded epimomorphism. Finally,
the kernel of $\nu$ (which is a $G$-graded ideal of $S^{-1}U\otimes_{\kk}\ff(\Lambda_{i})$)
is zero, thus proving that $\nu$ is a $G$-graded isomorphism.
\end{proof}
\begin{lem}
$S^{-1}U\cdot\ff(\Lambda_{i})=A_{i}\otimes\ff(\Lambda_{i})$.
\end{lem}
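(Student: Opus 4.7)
The target is $S^{-1}U\cdot\ff(\Lambda_i)=A_i\otimes\ff(\Lambda_i)$ inside $A_i\otimes\ff(\Lambda_i)$. The inclusion $\subseteq$ is immediate: $U$ was constructed as an $\ff$-subalgebra of $A_i\otimes\ff(\Lambda_i)$, and by the preceding lemma $S=Z(U)_e\setminus\{0\}$ lies inside $\ff(\Lambda_i)\cdot 1_{A_i}\otimes 1$, so its elements are automatically invertible in $A_i\otimes\ff(\Lambda_i)$; hence $S^{-1}U$ still lives inside $A_i\otimes\ff(\Lambda_i)$. The content of the statement is the reverse inclusion.

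To establish $\supseteq$ it suffices to show that $b\otimes 1\in U\cdot\ff(\Lambda_i)$ for every homogeneous basis element $b\in B_g$, since these elements span $A_i\otimes\ff(\Lambda_i)$ over $\ff(\Lambda_i)$. Fix $g\in G$, set $N_g=|B_g|=\dim_\ff A_g$, and consider the first $N_g$ generic elements of degree $g$,
\[
y_{j,g}=\sum_{b\in B_g}t_{j,g,b}\,(b\otimes 1)\in U,\qquad j=1,\ldots,N_g.
\]
Assembling these identities into a matrix equation yields $(y_{j,g})_{j=1}^{N_g}=M_g\cdot(b\otimes 1)_{b\in B_g}$, where $M_g=(t_{j,g,b})_{j,b}$ is an $N_g\times N_g$ matrix whose entries are pairwise distinct commutative indeterminates from $\Lambda_i$. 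Its determinant is a nonzero polynomial (its expansion is a signed sum of distinct monomials in independent variables), hence $M_g$ is invertible over $\ff(\Lambda_i)$.

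Multiplying by $M_g^{-1}$ expresses each $b\otimes 1$ as an $\ff(\Lambda_i)$-linear combination of $y_{1,g},\ldots,y_{N_g,g}\in U$, so $b\otimes 1\in U\cdot\ff(\Lambda_i)\subseteq S^{-1}U\cdot\ff(\Lambda_i)$. Ranging over all $g\in G$ and all $b\in B_g$ shows $A_i\otimes 1\subseteq S^{-1}U\cdot\ff(\Lambda_i)$, which together with $\ff(\Lambda_i)\subseteq S^{-1}U\cdot\ff(\Lambda_i)$ gives the desired equality. I do not foresee a genuine obstacle: the argument reduces to the algebraic independence of the variables $t_{j,g,b}$ built into the definition of $U$ and a routine linear-algebra inversion. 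The only point worth verifying is that $\det(M_g)\neq 0$ as a polynomial in the $t_{j,g,b}$, and this is immediate from the fact that the $N_g^2$ entries of $M_g$ are distinct independent commutative indeterminates.
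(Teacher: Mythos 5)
Your proof is correct, but it takes a genuinely different route from the one in the paper. The paper argues structurally: by the preceding lemmas, $S^{-1}U\cdot\ff(\Lambda_{i})$ is a $G$-simple $\ff(\Lambda_{i})$-subalgebra of $A_{i}\otimes\ff(\Lambda_{i})$ with the same $G$-graded identities, and then invokes the equality of the graded exponent with the $\ff(\Lambda_{i})$-dimension for such algebras to conclude that the two algebras have the same dimension, hence coincide. You instead prove the containment $A_{i}\otimes 1\subseteq U\cdot\ff(\Lambda_{i})$ directly by linear algebra: the coefficient matrix $M_{g}=(t_{j,g,b})$ of the first $N_{g}$ generic elements of degree $g$ consists of $N_{g}^{2}$ algebraically independent indeterminates, so $\det M_{g}$ is a nonzero polynomial and $M_{g}$ is invertible over $\ff(\Lambda_{i})$, which recovers every $b\otimes 1$ as an $\ff(\Lambda_{i})$-combination of elements of $U$. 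Your argument is more elementary and in fact stronger: it shows $U\cdot\ff(\Lambda_{i})=A_{i}\otimes\ff(\Lambda_{i})$ already, without localizing at $S$ and without using $G$-simplicity, the coincidence of graded identities, or any exponent theory. What the paper's route buys is uniformity with the surrounding development (it reuses the machinery of graded exponents already needed elsewhere); what yours buys is independence from that machinery, and in particular it sidesteps any worry about justifying $\exp_{G}=\dim$ over the non-algebraically-closed field $\ff(\Lambda_{i})$. Both are valid; yours is arguably the cleaner proof of this particular lemma.
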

\begin{proof}
We know by now that $S^{-1}U\cdot\ff(\Lambda_{i})$ is a $G$-simple
$\ff(\Lambda_{i})$-subalgebra of $A_{i}\otimes\ff(\Lambda)$ having
the same $G$-graded identities. So 
\[
\dim_{\ff(\Lambda_{i})}S^{-1}U\cdot\ff(\Lambda_{i})=\exp_{G}S^{-1}U\cdot\ff(\Lambda_{i})=\exp_{G}A_{i}\otimes\ff(\Lambda)=\dim_{\ff(\Lambda_{i})}A_{i}\otimes\ff(\Lambda).
\]
Thus the statement follows.
\end{proof}
We have shown that for a field $\ll$ containing both $\ff(\Lambda_{1})$
and $\ff(\Lambda_{2})$, one has
\[
A_{1}\otimes_{\ff}\ll=A_{2}\otimes_{\ff}\ll.
\]

To conclude that $A_{1}$ and $A_{2}$ are $G$-isomorphic we only
need to use the well known fact that if $X$ is any algebraic variety
over a field $(\ff\subseteq)\ll$, then since $\ff$ is algebraically
closed, $X(\ff)$ (the $\ff$points of $X$) is dense in $X$. Indeed,
we take $X$ to be $\text{Hom}_{G,\ll}(A_{1}\otimes_{\ff}\ll,A_{2}\otimes_{\ff}\ll)$.
Notice that $X$ is defined over $\ff$. The subset $V=\text{Iso}_{G,\ll}(A_{1}\otimes_{\ff}\ll,A_{2}\otimes_{\ff}\ll)$
is open (defined by non-vanshing of a certain determinant) and non-empty.
Hence, $V$ contains a point defined over $F$ - finishing the proof.

\bibliographystyle{plain}
\bibliography{C:/Users/Yakov/Dropbox/Thesis/reffe}

\end{document}